\numberwithin{equation}{section}
\theoremstyle{definition}
\newtheorem{definition}{Definition}[section]
\theoremstyle{remark}
\newtheorem{remark}[definition]{Remark}
\theoremstyle{plain}
\newtheorem{proposition}[definition]{Proposition}
\newtheorem{theorem}[definition]{Theorem}
\newtheorem{lemma}[definition]{Lemma}
\newtheorem{result}[definition]{Result}
\newtheorem{corollary}[definition]{Corollary}
\definecolor{Wine}{rgb}{0.5,0,0.05}
\definecolor{DPurple}{rgb}{0.46,0.2,0.69}
\definecolor{Red}{rgb}{1,0,0}
\newcommand{\zbar}{\overline{z}}
\newcommand{\wbar}{\overline{w}}
\newcommand*{\defeq}{\mathrel{\vcenter{\baselineskip0.5ex \lineskiplimit0pt \hbox{\scriptsize.}\hbox{\scriptsize.}}}=}
\newcommand*{\defines}{=\mathrel{\vcenter{\baselineskip0.5ex \lineskiplimit0pt \hbox{\scriptsize.}\hbox{\scriptsize.}}}}
\newcommand\pdshort[2]{\partial^{{#2}}_{{#1}}}
\newcommand{\levi}{\mathscr{L}}
\newcommand{\OM}{\Omega}
\newcommand{\unitdisk}{\mathbb{D}}
\newcommand{\cspdmaltalt}{\mathcal{Q}^{\alpha,a,h}}
\newcommand{\cspdmalt}{\mathcal{Q}^{1/(p-1),a,h}}
\newcommand{\bldblkdm}{\mathcal{R}}
\newcommand{\smoo}{\mathcal{C}}
\newcommand{\hol}{\mathcal{O}}
\newcommand{\uni}{\mathsf{U}}
\newcommand{\koba}{\mathsf{k}}
\newcommand{\dkoba}{\kappa}
\newcommand{\hypsrf}{\mathscr{H}}
\newcommand{\mani}{\mathcal{M}}
\newcommand{\xpoint}{\mathfrak{z}^{\raisebox{-2pt}{$\scriptstyle {x}$}}}
\newcommand{\C}{\mathbb{C}} 
\newcommand{\R}{\mathbb{R}}
\newcommand{\Z}{\mathbb{Z}}
\newcommand{\N}{\mathbb{N}}
\newcommand{\posint}{\mathbb{Z}_{+}}
\newcommand{\inv}{\mathsf{inv}}
\newcommand{\grph}{\mathsf{gr}}
\newcommand{\rprt}{\mathsf{Re}}
\newcommand{\iprt}{\mathsf{Im}}
\newcommand{\distance}{\mathrm{dist}}
\newcommand{\dtb}[1]{\delta_{#1}}
\newcommand{\bcdot}{\boldsymbol{\cdot}}
\newcommand{\bdy}{\partial}
\newcommand{\range}{\mathsf{range}}
\newcommand{\domain}{\mathsf{dom}}
\begin{document}

\title[A weak notion of visibility]{A weak notion of visibility, a family of 
\\ examples, and Wolff--Denjoy theorems}

\author{Gautam Bharali}
\address{Department of Mathematics, Indian Institute of Science, Bangalore 560012, India}
\email{bharali@iisc.ac.in}

\author{Anwoy Maitra}
\address{Department of Mathematics, Indian Institute of Science, Bangalore 560012, India}
\email{anwoymaitra@iisc.ac.in}

\begin{abstract}
We investigate a form of visibility introduced recently by Bharali and
Zimmer\,---\,and shown to be possessed by a class of domains called Goldilocks domains.
The range of theorems established for these domains stem from this form of visibility together
with certain quantitative estimates that define Goldilocks domains. We show that some
of the theorems alluded to follow \emph{merely} from the latter notion of
visibility. We call those domains that possess this property visibility domains with respect to
the Kobayashi distance. We provide a sufficient condition for a domain in $\C^n$ to be a
visibility domain. A part of this paper is devoted to constructing a family of domains
that are visibility domains with respect to the Kobayashi distance but are \emph{not}
Goldilocks domains. Our notion of visibility is reminiscent of uniform visibility
in the context of CAT(0) spaces. However, this is an imperfect analogy because, given a
bounded domain $\OM$ in $\C^n$, $n\geqslant 2$, it is, in general, not even known whether 
the metric space $(\OM, \koba_{\OM})$ (where $\koba_{\OM}$ is the Kobayashi distance)
is a geodesic space. Yet, with just this weak property, we establish two
new Wolff--Denjoy-type theorems. 
\end{abstract}

\keywords{Kobayashi distance, Kobayashi metric, taut domains, visibility, Wolff--Denjoy theorem}
\subjclass[2010]{Primary: 32F45, 32H50, 53C23; Secondary: 32U05}

\maketitle

\vspace{-1.0cm}
\section{Introduction and statement of main results}\label{S:intro}

This work is motivated by the results\,---\,ranging from the boundary behaviour of complex geodesics
to the dynamics of iterations of holomorphic maps\,---\,in a recent work by Bharali and Zimmer
\cite{Bharali_Zimmer}. In that work, the authors introduce a class of bounded domains in $\C^n$, called
Goldilocks domains, and establish for these domains the range of results alluded to. Given any bounded
domain $\OM\subset \C^n$, let $\koba_{\OM}$ be the Kobayashi distance on $\OM$ and
$\dkoba_{\OM} : \OM\times \C^n\cong T^{1,0}\OM\to [0, +\infty)$ be the infinitesimal Kobayashi
metric (also called the Kobayashi--Royden metric).
Goldilocks domains are defined in terms of certain quantitative bounds from below on
$\dkoba_{\OM}(z;\,\bcdot)$ and from above on $\koba_{\OM}(o, z)$ (where $o$ is some chosen
point in $\OM$) as $z\to \bdy{\OM}$\,---\,see subsection~\ref{SS:intro_caltrops} below for a precise
definition. The results in \cite{Bharali_Zimmer} are a consequences of these bounds. In proving some
of the major results in \cite{Bharali_Zimmer}, these bounds play two separate roles:
\begin{enumerate}[leftmargin=22pt]
  \item[$(a)$] in controlling the oscillation of holomorphic maps, the magnitudes of their derivatives, etc.,
  along sequences approaching $\bdy{\OM}$; and
  
  \item[$(b)$] in establishing that $(\OM, \koba_{\OM})$ has certain consequential features\,---\,first
  identified by Eberlein and O'Neill\,---\,possessed by manifolds with negative sectional curvature.
  
\end{enumerate}
The property hinted at by $(b)$ is a purely geometric (i.e., not quantitative) property
reminiscent of visiblity in the sense of
Eberlein--O'Neill \cite{Eberlein_ONeill}. It is used in a fundamental way in the above-mentioned results.
So, it is natural to ask whether the conclusions of those results would hold true in domains that \textbf{merely}
have the geometric property alluded to\,---\,i.e., without assuming the quantitative estimates that define
Goldilocks domains. We coin a term for those domains that have this property via the following definition
(see subsection~\ref{SS:intro_visibility} for the definition of a $(\lambda, \kappa)$-almost-geodesic):

\begin{definition}\label{D:visibility_domain}
	Let $\OM$ be a bounded domain in $\C^n$. We say that $\OM$ is a \emph{visibility domain
	with respect to the Kobayashi distance} (or just \emph{visibility domain} for brevity)
	if, given any $\lambda \geqslant 1$ and
	$\kappa \geqslant 0$, for each pair of distinct points $\xi, \eta \in \bdy\OM$ and each pair
	of $\overline{\OM}$-open neighbourhoods $V$ and $W$ of $\xi$ and $\eta$,
	respectively, such that $\overline{V} \cap \overline{W} = \varnothing$, there exists a compact
	subset $K$ of $\OM$ such that the image of each
	$(\lambda,\kappa)$-almost-geodesic $\sigma : [0,L] \to \OM$ with $\sigma(0) \in V$
	and $\sigma(L) \in W$ intersects $K$.
\end{definition}

While the notion in the above definition is strongly reminiscent of the notion of visibility
manifolds\,---\,especially
in view of \cite[pp.~54--55]{Ballmann_Gromov_Schroeder}\,---\,we must point out that the analogy
is imperfect. For instance, given a
bounded domain in $\C^n$, $n\geqslant 2$, it is, in general, not even known whether 
the metric space $(\OM, \koba_{\OM})$ is a geodesic space. It is for this reason that
Definition~\ref{D:visibility_domain} features $(\lambda,\kappa)$-almost-geodesics, which
serve as substitutes for geodesics.

\smallskip

One might ask: is there a reasonably rich collection of domains that are visibility domains
with respect to the Kobayashi distance? The answer to this is, ``Yes,'' since any Goldilocks
domain is a visibility domain with respect to the Kobayashi distance, and\,---\,as shown in
\cite{Bharali_Zimmer}\,---\,the Goldilocks property admits a very wide range of domains.
However, Definition~\ref{D:visibility_domain} would be interesting only if one knew that there
exist visibility domains that are \textbf{not} Goldilocks
domains. A major part of this paper is devoted to showing that there is a rich family of
domains of this sort. We introduce these domains in subsection~\ref{SS:intro_caltrops}.
In other words, Definition~\ref{D:visibility_domain} is not just a geometrization of the Goldilocks property
but also admits domains in $\C^n$ that are fundamentally different from Goldilocks domains.

\smallskip

Most consequences of visibility in the sense of \cite{Eberlein_ONeill} have been extended
to CAT(0) spaces\,---\,see \cite[Chapter~II]{Bridson_Haefliger}, for instance. Uniform visibility
is the analogue, in the context of CAT(0) spaces, of the property given in Definition~\ref{D:visibility_domain}.
Now, a proper CAT(0) space is uniformly visible if and only if it is Gromov hyperbolic.
There is a reason for mentioning this: many statements that one would like to prove for the metric
space $(\OM, \koba_{\OM})$ would follow very easily if this space were Gromov hyperbolic. However,
Gromov hyperbolicity is a property that is \emph{extremely} difficult to establish for $\koba_{\OM}$
for $\OM\subset \C^n$ when $n\geqslant 2$\,---\,see  \cite{Balogh_Bonk, Zimmer2016} for some positive
instances. Visibility, in the sense of Definition~\ref{D:visibility_domain}, is much easier to show.
In Theorem~\ref{T:gen_visibility-lemma} below we present fairly mild conditions for a
bounded domain in $\C^n$ to be a visibility domain.
It is this theorem that we use to show that the domains introduced in
subsection~\ref{SS:intro_caltrops} are visibility domains with respect to the Kobayashi
distance. We expect that their construction would serve as a general recipe for constructing
visibility domains.

\smallskip

Returning to the question in our first paragraph: the link between Gromov hyperbolicity and
the property in Definition~\ref{D:visibility_domain}, via analogies to uniform visibility,
continues to motivate (as in the case of \cite{Bharali_Zimmer}) certain key moves
in proving analogues of some of the results in \cite{Bharali_Zimmer}. But we show
here that the roles of the quantitative bounds (which also define Goldilocks
domains) identified in $(a)$ above can often be managed by the visibility property alone.
This is the content of our results in Section~\ref{S:props_visibility},
which may be of independent interest. With these inputs, we can systematically approach
several applications for which visibility is well-suited\,---\,some of which will be a part of forthcoming
work. In this paper, we prove two
Wolff--Denjoy-type theorems, which we introduce in
subsection~\ref{SS:intro_Wolff_Denjoy}.

\smallskip

We now introduce the main theorems of this paper.

\subsection{Visibility domains that are not Goldilocks domains}\label{SS:intro_caltrops}
We begin with the definition of a Goldilocks domain. For this, we shall need two quantities.
Given a bounded domain $\OM$ and a point $z\in \OM$, $\delta_{\OM}(z)$ will denote the (Euclidean)
distance from $z$ to $\C^n\setminus\OM$. Next, we define:
\[
  M_{\OM}(r) \defeq \sup\Big\{ \frac{1}{\dkoba_{\OM}(z;v)} \mid \delta_{\OM}(z) \leqslant r
  \text{ and } \|v\| =1\Big\},
\]
where $\|\bcdot\|$ denotes the Euclidean norm (the choice of a norm is actually irrelevant to the purpose
that $M_{\OM}$ serves). From the definition of $\dkoba_{\OM}$, it is easy to see that $M_{\OM}$
expresses the lower bound for $\dkoba_{\OM}$ on the unit sphere in $T^{1,0}\OM$ in
terms of the distance from $\C^n\setminus \OM$. 

\begin{definition}\label{D:Goldilocks_domain}
A bounded domain $\OM \subset \C^n$ is called a \emph{Goldilocks domain} if
\begin{enumerate}
	\item for some (hence any) $\epsilon >0$ we have
	\[
	  \int_0^\epsilon \frac{1}{r} M_\Omega\left(r\right) dr < \infty, \text{ and}
	\]
	\item for each $z_0 \in \OM$ there exist constants $C, \alpha > 0$ (that depend on $z_0$) such that 
	\begin{equation}\label{E:Goldilocks_koba-distance_bound}
	  \koba_{\OM}(z_0, z) \leqslant C + \alpha \log\frac{1}{\delta_{\OM}(z)} \quad \forall z\in \OM.
	\end{equation}
\end{enumerate}
\end{definition}
The quantitative bounds in the above definition encode the following idea: in a Goldilocks domain, 
$\dkoba_{\OM}(z;\,\bcdot)$ cannot grow too slowly and 
$\koba_{\OM}(z_0, z)$ cannot grow too rapidly as $z\to \bdy{\OM}$ (this is the rationale for
the term ``Goldilocks domains'').

\smallskip

The latter has the following geometric implication:  if $\OM$ is a Goldilocks domain, then 
$\bdy{\OM}$ can neither have outward-pointing cusps nor points at which $\bdy{\OM}$ is flat to infinite order and is,
in a precise sense, too flat. One may intuit the assertion about outward-pointing cusps with just a little work:
a classical argument for planar domains reveals that Condition~2 above fails for such domains. This
is the intuition behind a family of domains\,---\,which we call \emph{caltrops}\,---\,that are \textbf{not} Goldilocks
domains, but whose geometry is sufficiently well-behaved that it is reasonable to expect them to be
visibility domains. With this, we make the following

\begin{definition}\label{D:caltrop}
	A bounded domain $\OM\subset \C^n$, $n \geqslant 2$, is called a \emph{caltrop} if 	there exists a finite
	set of exceptional points $\{q_1,\dots, q_N\}\subset \bdy{\OM}$ such that 
	$\bdy{\OM}\setminus \{q_1,\dots, q_N\}$ is $\smoo^2$-smooth, if $\bdy{\OM}$
	is strongly Levi-pseudoconvex at each point in $\bdy{\OM}\setminus \{q_1,\dots, q_N\}$,
	and if for each exceptional point $q_j$, $j = 1,\dots N$, there exists a connected open neighbourhood
	$V_j \ni q_j$ such that $\OM\cap V_j$ is described as follows:
	there exist constants $p_j \in (1, 3/2)$ and $C_j > 1$,
	a unitary transformation $\uni^{(j)}$,
	and a continuous function $\psi_j : [0, A_j] \to [0, +\infty)$ (where $A_j > 0$)
	with the properties mentioned below such that
	$\uni_j(\OM\cap V_j)$ is a ``solid of revolution'' given by
	\[
	  \uni_j(\OM\cap V_j) = 
	  \big\{ (z_1,\dots, z_n)\in \C^n \mid \rprt(z_n) \in (0, A_j), \;
	  \iprt(z_n)^2 + \sum\nolimits_{1\leqslant j\leqslant (n-1)}\!|z_j |^2
	  < \psi_j\big( \rprt(z_n) \big)^2 \big\},
	\]
	where we write $\uni_j \defeq \uni^{(j)}(\bcdot - q_j)$. Each function $\psi_j$ has the following
	properties:
	\begin{itemize}
		\item $\psi_j$ is of class $\smoo^2$ on $(0, A_j)$;
		\item for each $x\in [0, A_j]$, we have
		\[
		  (1/C_j)\,x^{p_j}\,\leqslant \psi_j(x)\,\leqslant C_j\,x^{p_j};
		\]
		\item $\psi_j$ is strictly increasing and $\psi'_j$ is increasing on $(0, A_j)$; and
		\item $\lim_{x\to 0^+}\psi_j(x)\psi''_j(x) = 0$.	
	\end{itemize}
\end{definition}

A few words about the functions $\psi_j$ in the above definition\,---\,and about the last
(somewhat technical-looking) property\,---\,are in order. These
functions are meant to quantify the fact that, around each point of $\bdy{\OM}$ at which it is
non-smooth, the boundary resembles the following real (singular) hypersurface
\[
  \big\{ (z_1,\dots, z_n)\in \C^n \mid \rprt(z_n) \in (0, A), \;
	  \iprt(z_n)^2 + \sum\nolimits_{1\leqslant j\leqslant (n-1)}\!|z_j |^2
	  = \rprt(z_n)^{2p} \big\}
\]
for some $p\in (1, 3/2)$. The latter models a H{\"o}lderian cusp that is not too sharp.

\smallskip

The reader may wonder, given that several specific properties must hold true simultaneously in
a caltrop, whether such a domain as described in Definition~\ref{D:caltrop} can even exist. We show
in Section~\ref{S:caltrops} that caltrops do exist. With this,  the assertion about caltrops that is of
greatest interest to us is:

\begin{theorem}\label{T:visibility-caltrops}
	Caltrops are visibility domains with respect to the Kobayashi distance. However, a caltrop is \emph{not}
	a Goldilocks domain.
\end{theorem}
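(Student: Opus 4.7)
The proof splits into two independent parts. For the visibility claim, the plan is to apply Theorem~\ref{T:gen_visibility-lemma} (the general visibility criterion established earlier in the paper) by verifying its local hypotheses at every boundary point of $\OM$. At points of $\bdy\OM\setminus\{q_1,\dots,q_N\}$ the boundary is $\smoo^2$-smooth and strongly Levi-pseudoconvex, and the required Kobayashi-metric lower bounds follow from the standard scaling estimates near strongly pseudoconvex boundary points. The real work is to verify the hypotheses near each exceptional point $q_j$, where one uses the explicit normal form for $\OM\cap V_j$ as a ``solid of revolution''. Here the data of Definition~\ref{D:caltrop}---$\smoo^2$-smoothness of $\psi_j$ on $(0,A_j)$, the two-sided polynomial control $(1/C_j)x^{p_j}\leqslant\psi_j(x)\leqslant C_j x^{p_j}$ with $p_j\in(1,3/2)$, monotonicity of $\psi_j$ and $\psi_j'$, and the tangent-flattening condition $\lim_{x\to 0^+}\psi_j(x)\psi_j''(x)=0$---will let me manufacture holomorphic peak or barrier functions at $q_j$, from which the needed Kobayashi estimates are extracted by standard methods.

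For the non-Goldilocks claim the strategy is to show that Condition~(2) of Definition~\ref{D:Goldilocks_domain} fails at each cusp $q_j$, by reducing to the planar cusp case. Introduce the two-dimensional cuspidal slice
\[
  D_j \defeq \big\{ w\in\C \mid \rprt(w)\in (0, A_j),\; |\iprt(w)| < \psi_j(\rprt(w)) \big\},
\]
the holomorphic injection $\iota_j : D_j \to \uni_j(\OM\cap V_j)$ defined by $\iota_j(w) = (0,\dots,0,w)$, and its holomorphic left-inverse $\pi_j(z) = z_n$. Distance-decreasing of both maps forces $\koba_{\OM\cap V_j}(\iota_j(a),\iota_j(b)) = \koba_{D_j}(a,b)$. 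A classical estimate for the hyperbolic distance on a planar outward-cusp domain (obtained either by direct integration of the Poincar\'e density or by conformal flattening) shows that, for fixed $a_0\in D_j$ and $b=x\to 0^+$ along the real axis, $\koba_{D_j}(a_0,x)$ grows polynomially in $1/x$; meanwhile $\delta_{D_j}(x)\asymp x^{p_j}$, so $\log(1/\delta_{D_j}(x))$ grows only logarithmically in $1/x$. In particular $\koba_{D_j}(a_0,x)/\log(1/\delta_{D_j}(x))\to\infty$. A localization argument---leveraging strong Levi-pseudoconvexity of $\OM$ outside $\{q_1,\dots,q_N\}$ together with a global peak function at $q_j$---will then transfer this blow-up from $\koba_{\OM\cap V_j}$ to $\koba_\OM$, thereby contradicting~(2).

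I expect the main obstacle to be precisely this localization in the non-Goldilocks part: passing from the clean equality for $\koba_{\OM\cap V_j}$ to a comparable lower bound for $\koba_\OM$ requires controlling those analytic discs in $\OM$ that might exit $V_j$ and re-enter. This should follow from strong Levi-pseudoconvexity of $\bdy\OM$ away from $\{q_1,\dots,q_N\}$ together with a global plurisubharmonic peak function at $q_j$, but it will be the most delicate step. By contrast, in the visibility part the verification at $q_j$ hinges on the conditions $p_j<3/2$ and $\psi_j\psi_j''\to 0$ acting in concert---jointly encoding that the cusp is ``not too sharp''---and the same peak-function construction used for localization should feed directly into the hypotheses of Theorem~\ref{T:gen_visibility-lemma}.
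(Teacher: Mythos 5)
Your high-level structure matches the paper's: verify the two hypotheses of Theorem~\ref{T:gen_visibility-lemma} (upper bound on $\koba_\OM$ and integrability of $M_\OM$), and prove non-Goldilocks by showing $\koba_\OM(z_0,\bcdot)$ grows polynomially, not logarithmically, in $1/\delta_\OM$ at a cusp. But there is a real gap in the visibility half. Your plan for the cusps is to ``manufacture holomorphic peak or barrier functions at $q_j$, from which the needed Kobayashi estimates are extracted.'' Peak-type constructions (via Sibony's Result~\ref{R:koba_metric_lower}, or globally defined negative psh functions) give \emph{lower} bounds on $\dkoba_\OM$---exactly what is needed for the $M_\OM$ integrability condition, and this is indeed what the paper does in Step~1. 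But Theorem~\ref{T:gen_visibility-lemma} also requires an \emph{upper} bound $\koba_\OM(z_0,z)\leqslant f(1/\delta_\OM(z))$, and peak/barrier functions cannot supply this. The paper obtains it by an entirely separate device (Section~\ref{S:comparison_dom}, Step~3 of the proof): one first explicitly computes $\koba_{\cspdmaltalt}$ on a carefully chosen planar cusp domain $\cspdmaltalt$ (via a chain of conformal maps to $\unitdisk$), then $\C$-affinely embeds translated copies $\Psi_{j,w}(\cspdmaltalt)$ into the spike so that an arbitrary nearby $w$ lies on the embedded slice, and uses distance-decrease of the embedding. Your planar slice $\iota_j : D_j\hookrightarrow\OM\cap V_j$ is a step in this direction, but it only covers the axis $z'=0$; the paper's Lemma~\ref{L:ess_bulk_lmm} handles general $w'$, and this parametrized family of embeddings is the missing ingredient. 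Without it, the hypothesis of Theorem~\ref{T:gen_visibility-lemma} involving $f$ is unverified.

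On the non-Goldilocks half, your reduction to a planar cuspidal slice and the observation that $\koba_{D_j}(a_0,x)$ grows like a power of $1/x$ while $\log(1/\delta_{D_j}(x))$ grows only logarithmically is sound, and you correctly identify that the retraction $\pi_j$ gives the \emph{equality} $\koba_{\OM\cap V_j}(\iota_j a, \iota_j b)=\koba_{D_j}(a,b)$. However, you then need to pass from $\koba_{\OM\cap V_j}$ to $\koba_\OM$---and inclusion gives the inequality in the wrong direction---so you propose a ``localization'' argument controlling discs that exit and re-enter $V_j$, which you flag as the delicate step. The paper sidesteps this entirely. It never localizes the \emph{distance}. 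Instead, in Step~1 it constructs a negative plurisubharmonic function $u$ on all of $\OM$ (by patching the cusp defining function $\rho$ to $\phi(\|z\|^2)-M_\phi$ with cutoffs), applies Sibony to get a pointwise lower bound on the \emph{infinitesimal} metric $\dkoba_\OM(w;\bcdot)\gtrsim \|v\|\,/\,|u(w)|^{1/2}$ valid for $w$ in the spike, and then in Lemma~\ref{L:koba_lower_bnd} integrates this along an \emph{arbitrary} path from $z_0$ to $z$ using Royden's integral formula for $\koba_\OM$. Because the lower bound on $\dkoba_\OM$ is global (it is a bound for the full domain's metric, not for $\OM\cap V_j$), paths exiting $V_j$ cause no difficulty, and the change of variables along the $\rprt(z_n)$-coordinate directly yields the power-law lower bound. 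So while your localization scheme could in principle be made to work, the route is unnecessarily hard; the construction of a global psh barrier and passage through the infinitesimal metric is both cleaner and the one the paper actually takes.
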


The proof of this theorem requires 
several supporting results\,---\,about which we shall say more presently\,---\,plus
a sufficient condition for a bounded domain to be a visibility domain with respect to the Kobayashi distance.
We discuss this sufficient condition next.

\subsection{A sufficient condition for visibility}\label{SS:intro_visibility}
The following is the sufficient condition that we have alluded to several times in this section. 
	
\begin{theorem}[\textsc{General Visibility Lemma}]\label{T:gen_visibility-lemma}
	Let $\OM \subset \C^n$ be a bounded domain.
	Suppose there exists a $\smoo^1$-smooth strictly increasing
	function $f : (0,+\infty) \to \R$ such that
	\begin{itemize}
		\item $f(t)\to +\infty$ as $t\to +\infty$; and
		\item for some $z_0$, we have
		\[
		  \koba_{\OM}(z_0, z) \leqslant f\Big( \frac{1}{\dtb{\OM}(z)} \Big) \quad \forall z \in \OM.
		\]
	\end{itemize}
	Assume that $M_{\OM}(t) \to 0$ as $t \to 0$ and that there exists an $r_0 > 0$ such that
	\begin{equation} \label{E:intgr_cndn_M}
	  \int_0^{r_0} \frac{M_{\OM}(r)}{r^2}\,f'\!\!\left( \frac{1}{r} \right) dr < \infty.
	\end{equation}
	Then, $\OM$ is a visibility domain with respect to the Kobayashi distance.
\end{theorem}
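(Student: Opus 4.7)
The plan is to argue by contradiction. Suppose $\OM$ is not a visibility domain: there exist $\lambda \geqslant 1$, $\kappa \geqslant 0$, distinct $\xi, \eta \in \bdy\OM$, disjoint $\overline{\OM}$-open neighbourhoods $V \ni \xi$, $W \ni \eta$, and a sequence of $(\lambda, \kappa)$-almost-geodesics $\sigma_k : [0, L_k] \to \OM$ with $\sigma_k(0) \in V$ and $\sigma_k(L_k) \in W$ whose images exhaust compact subsets of $\OM$. Using the exhaustion by $\{z : \dtb{\OM}(z) \geqslant 1/n\}$ and passing to a subsequence, we may assume $\sigma_k([0, L_k]) \subseteq \{z : \dtb{\OM}(z) < r_k\}$ with $r_k \to 0^+$.

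The first ingredient is a purely analytic consequence of the integrability hypothesis: $M_{\OM}(r)\,f(1/r) \to 0$ as $r \to 0^+$. The substitution $u = f(1/r)$ turns the hypothesis into $\int^{\infty} \phi(u)\,du < \infty$, where $\phi(u) \defeq M_{\OM}(1/f^{-1}(u))$. Since $M_{\OM}$ and $f^{-1}$ are both non-decreasing, $\phi$ is non-increasing; the standard fact that a non-negative non-increasing function integrable at infinity satisfies $u\phi(u) \to 0$ (via $u\phi(u)/2 \leqslant \int_{u/2}^{u} \phi(s)\,ds$) yields the claim.

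The second, geometric, ingredient is a transit estimate: if $\sigma : [a, b] \to \OM$ is a $(\lambda, \kappa)$-almost-geodesic with $\dtb{\OM}(\sigma(a)) = \dtb{\OM}(\sigma(b)) = r$ and $\dtb{\OM}(\sigma(t)) \leqslant r$ for $t \in [a, b]$, then
\[
  \|\sigma(a) - \sigma(b)\| \;\leqslant\; \int_a^b \|\sigma'(t)\|\,dt \;\leqslant\; \lambda^2\bigl(2 f(1/r) + \kappa\bigr)\,M_{\OM}(r).
\]
This follows by combining three inputs: (i) $\|\sigma'(t)\| \leqslant \lambda M_{\OM}(r)$, from the bound $\dkoba_{\OM}(\sigma(t);\sigma'(t)) \leqslant \lambda$ and the definition of $M_{\OM}$; (ii) $\koba_{\OM}(\sigma(a), \sigma(b)) \leqslant 2 f(1/r)$, via the triangle inequality through $z_0$ and the upper bound hypothesis on $\koba_{\OM}$; (iii) $b - a \leqslant \lambda\bigl(\koba_{\OM}(\sigma(a), \sigma(b)) + \kappa\bigr)$, from the almost-geodesic lower bound. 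By the first ingredient, the right-hand side vanishes as $r \to 0^+$.

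With these in hand, the strategy is to decompose $[0, L_k]$ according to the depth of $\sigma_k$ and bound its total Euclidean displacement strictly below $\distance(\overline V, \overline W) > 0$, yielding the contradiction. The hard part will be the endpoint bookkeeping: since the image of $\sigma_k$ lies entirely in $\{\dtb{\OM} < r_k\}$ with $r_k \to 0$, the transit hypothesis $\dtb{\OM}(\sigma(a)) = \dtb{\OM}(\sigma(b)) = r$ does not apply to any sub-segment at a fixed depth scale $r$ for large $k$. To handle this one must either iterate the transit estimate on a succession of shrinking depth scales that track the endpoint depths $\dtb{\OM}(\sigma_k(0))$ and $\dtb{\OM}(\sigma_k(L_k))$, or introduce a dyadic decomposition of the depth variable and use the \emph{full} integrability condition (not merely the pointwise limit $M_{\OM}(r)\,f(1/r) \to 0$) to sum the successive Euclidean contributions. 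Organising this dyadic accounting carefully is the main technical effort.
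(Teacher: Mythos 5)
Your overall strategy is sound, and it differs from the paper's in a real way: the paper passes (via Arzel\`a--Ascoli and Result~\ref{R:Lipschitz}) to a limit curve $\sigma:(a,b)\to\overline{\OM}$ and derives a contradiction by showing $\sigma$ is simultaneously constant (using $M_{\OM}\to 0$) and non-constant (a tail estimate using Lemma~\ref{L:cntrl_MOmega_lmm}), whereas you aim to directly bound the Euclidean displacement of each $\sigma_k$ below $\distance(\overline V,\overline W)>0$, with no limit curve. Your two ingredients are correct and verified: the change of variables $u=f(1/r)$ does reduce \eqref{E:intgr_cndn_M} to integrability at $+\infty$ of the non-increasing function $u\mapsto M_{\OM}(1/f^{-1}(u))$, hence $M_{\OM}(r)f(1/r)\to 0$ (a pleasant by-product: this makes the hypothesis $M_{\OM}(t)\to 0$ redundant, since $f(1/r)\to\infty$); and the ``Claim'' step $\|\sigma'(t)\|\leqslant\lambda M_{\OM}(\dtb{\OM}(\sigma(t)))$ appears in both your proposal and the paper. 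Both proofs ultimately rest on the same pointwise bound $\dtb{\OM}(\sigma_k(t))\leqslant 1/f^{-1}\big((1/2\lambda)|t-t_0|-\kappa/2\big)$ (where $t_0$ is the deepest point), obtained by combining the almost-geodesic lower bound with the two applications of the hypothesis $\koba_{\OM}(z_0,\cdot)\leqslant f(1/\dtb{\OM}(\cdot))$, and the same tail-integrability of $t\mapsto M_{\OM}\big(1/f^{-1}((1/2\lambda)|t|-\kappa/2)\big)$.

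Where your sketch is not yet tight is precisely where you flagged ``the main technical effort''. A literal dyadic-in-depth decomposition is awkward: the curve can re-enter a given depth band many times, so bounding the time spent in each band by the na\"ive ``$|t|\leqslant 2\lambda f(2^{m+1}/r_k)+\lambda\kappa$'' interval length overcounts, and the resulting series does not obviously compare to \eqref{E:intgr_cndn_M}. The clean execution is a single \emph{time-domain} split, not a depth decomposition. Reparametrize so $\dtb{\OM}(\sigma_k(\cdot))$ attains its maximum $r_k$ at $t=0$ (so $\sigma_k:[a_k,b_k]\to\OM$, $a_k\leqslant 0\leqslant b_k$). The two bounds $\dtb{\OM}(\sigma_k(t))\leqslant r_k$ and $\dtb{\OM}(\sigma_k(t))\leqslant 1/f^{-1}\big((1/2\lambda)|t|-\kappa/2\big)$ cross at $|t|=T_k\defeq 2\lambda f(1/r_k)+\lambda\kappa$; for $|t|\leqslant T_k$ use the first bound and your first ingredient to get a contribution $\leqslant 2\lambda\, T_k\,M_{\OM}(r_k)\to 0$, and for $|t|>T_k$ use the second bound plus the tail integrability (which is exactly the paper's Lemma~\ref{L:cntrl_MOmega_lmm}) to get a contribution that $\to 0$ because $T_k\to\infty$. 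Summing gives $\|\sigma_k(a_k)-\sigma_k(b_k)\|\leqslant\lambda\int_{a_k}^{b_k}M_{\OM}(\dtb{\OM}(\sigma_k(t)))\,dt\to 0$, the desired contradiction. With that split in place, your argument is complete and is in fact somewhat shorter than the paper's, as it avoids the case analysis on whether $a$ and $b$ are finite or infinite.
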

It is clear\,---\,comparing Theorem~\ref{T:gen_visibility-lemma} with Conditions~(1) and (2) in
Definition~\ref{D:Goldilocks_domain}\,---\,that our result is influenced by the definition
of Goldilocks domains. Among our motivations were:
\begin{itemize}
	\item that our conditions account for the estimates on $\koba_{\OM}$ and $\dkoba_{\OM}$
	when $\OM$ is any of the planar domains referred to in subsection~\ref{SS:intro_caltrops} with
	$\bdy\OM$ having outward-pointing cusps (which also play a central role in establishing that caltrops
	are visibility domains); and
	
	\item that elements of the proof of the main visibility result in \cite{Bharali_Zimmer}, namely:
	\cite[Theorem~1.4]{Bharali_Zimmer}, continue to be useful in establishing visibility (in the
	sense of Definition~\ref{D:visibility_domain}).
\end{itemize} 
Observe that the inequalities that define Goldilocks
domains are subsumed by Theorem~\ref{T:gen_visibility-lemma}: for these domains, just set
\[
  f(t) = C + \alpha\log(t), \quad t\in (0, +\infty),
\]
with $C, \alpha>0$ as in 
\eqref{E:Goldilocks_koba-distance_bound},
in the latter theorem. The proof of Theorem~\ref{T:gen_visibility-lemma} is given in
Section~\ref{S:gen_visibility-lemma}.

\smallskip

There are two essential matters relating to visibility domains that we had deferred. We address them here.
First, we give a definition for $(\lambda, \kappa)$-almost-geodesics:

\begin{definition}[Bharali--Zimmer, \cite{Bharali_Zimmer}]\label{D:lambda_kappa}
Let $\OM \subset \C^n$ be a bounded domain and $I \subset \R$ an interval.
For $\lambda \geqslant 1$ and $\kappa \geqslant 0$, a curve $\sigma:I \to \OM$ is called a
\emph{$(\lambda, \kappa)$-almost-geodesic} if 
\begin{enumerate}[leftmargin=22pt] 
	\item\label{item:quasi} for all $s,t \in I$,  
	\[
	  \lambda^{-1} |t-s| - \kappa \leqslant \koba_{\OM}(\sigma(s), \sigma(t))
	  \leqslant \lambda |t-s| +  \kappa; \text{ and}
	\]
	\item $\sigma$ is absolutely continuous (whence $\sigma'(t)$ exists for almost every $t\in I$)
	and, for almost every $t \in I$, $\dkoba_{\OM}(\sigma(t); \sigma'(t)) \leqslant \lambda$.
\end{enumerate}
\end{definition}

Secondly, the discussion surrounding visibility domains suggests that, given any pair of points in $\OM$,
there exists (for any $\lambda\geqslant 1$ and $\kappa > 0$) a $(\lambda, \kappa)$-almost-geodesic
joining them, but why must this be so? In fact, this is true for \textbf{any} bounded domain 
$\OM$\,---\,as shown by \cite[Proposition~4.4]{Bharali_Zimmer}.

\smallskip

The General Visibility Lemma (i.e., Theorem~\ref{T:gen_visibility-lemma}) is our key tool for showing that
caltrops are visibility domains.  We must derive appropriate upper bounds for
$\koba_{\OM}(o, z)$ (where $o$ is a chosen point in $\OM$) and lower
bounds for $\dkoba_{\OM}(z;\,\bcdot)$ for $z$ in the caltrop $\OM$, $z$ sufficiently close to
$\bdy\OM$. The following points describe
very briefly the challenging parts of the proof of Theorem~\ref{T:visibility-caltrops}:
\begin{enumerate}[leftmargin=22pt]
  \item[$(i)$] We explicitly calculate the Kobayashi distance on a model domain $D\Subset \C$,
  $\bdy{D}$ having an outward-pointing cusp, which is carefully chosen keeping in view the geometry
  of $\bdy{\OM}\cap V_j$, $j = 1,\dots, N$,
  the latter being an outward-pointing cusp of $\bdy\OM$  as described in
  Definition~\ref{D:caltrop}. Then, $\C$-affine embeddings of $D$ into $\OM\cap V_j$
  allow us to estimate $\koba_{\OM}$ using the fact that these embeddings are
  contractive relative to the Kobayashi distance.  
  \item[$(ii)$] We introduce a trick of estimating the Sibony 
  pseudometric \cite[Proposition~6]{Sibony} for $\OM$ in $\OM\cap V_j$,
  $j = 1,\dots, N$.
  The relationship between the latter pseudometric and $\dkoba_{\OM}$ leads to an 
  estimate for $\dkoba_{\OM}(z;\,\bcdot)$ from below for
  $z\in \OM\cap V_j$.
\end{enumerate}
The argument summarized by $(i)$ above requires several results, which are presented in
Section~\ref{S:comparison_dom}. The proof of Theorem~\ref{T:visibility-caltrops}
is given in Section~\ref{S:visibility-caltrops}. We expect the procedures in this
proof to serve as a general recipe for constructing new visibility domains.

\subsection{Wolff--Denjoy theorems for visibility domains}\label{SS:intro_Wolff_Denjoy}
The classical Wolff--Denjoy theorem is as follows (in this paper, 
$\unitdisk$ will denote the open unit disk in $\C$ with centre $0$):

\begin{result}[Denjoy\cite{Denjoy}, Wolff \cite{Wolff}]\label{R:WD}
Suppose $F:\unitdisk \to \unitdisk$ is a holomorphic map. Then, either:
\begin{enumerate}
	\item $F$ has a fixed point in $\unitdisk$; or
	\item there exists a point $\xi \in \partial \unitdisk$ such that $\lim_{\nu\to \infty} F^{\nu}(z) = \xi$
	for every $z \in \unitdisk$, this convergence being uniform on compact subsets of $\unitdisk$.
\end{enumerate}
\end{result}

There has been sustained interest in understanding the behaviour of iterates of a map
$F: X\to X$ where $X$ is a space that\,---\,in some appropriate sense\,---\,resembles $\unitdisk$
while $F$ possesses some degree of regularity that enables a generalization of Result~\ref{R:WD}
to $F : X\to X$. The above result was extended to the unit (Euclidean) ball in $\C^n$,
for all $n\in \posint$, by Herv{\'e} \cite{Herve}. Abate further generalized this in \cite{Abate88}
to strongly convex domains.
Let $X$ be a visibility manifold in the sense
of \cite{Eberlein_ONeill} (as discussed at the beginning of this section). Then,
one can construct a boundary for $X$ ``at infinity'' that serves as the analogue of
$\bdy\unitdisk$. With this set-up for $X$, Beardon \cite{Beardon1990} generalized the above result
to $F : X\to X$ where $F$ is a strict contraction.

\smallskip

For various reasons having to do with their intrinsic geometry, convex domains
predominate among recent generalizations of the Wolff--Denjoy theorem: see, for instance,
\cite{Beardon97, Budzynska, Abate_Raissy, Zimmer2017} and several of the results in \cite{Karlsson}.
Also see the survey article \cite{Reich_Shoikhet} for a discussion of generalizations to balls in
infinite-dimensional vector spaces and the extensive references therein. 
Visibility in the sense of Definition~\ref{D:visibility_domain} is one of the key ingredients
in the proof by Bharali--Zimmer of a generalization \cite[Theorem~1.10]{Bharali_Zimmer}
of Result~\ref{R:WD} to taut Goldilocks domains.
This extends the Wolff--Denjoy phenomenon to a wide range of domains, including
pseudoconvex domains of finite type\,---\,see \cite[Corollary~2.11]{Bharali_Zimmer}\,---\,which
are, in general, neither convex nor biholomorphic to convex domains. In the following result,
we extend the Wolff--Denjoy phenomenon to \textbf{all} visibility domains with respect to
the Kobayashi distance that are taut:   

\begin{theorem}\label{T:Wolff_Denjoy_gen}
	Suppose $\OM \subset \C^n$ is a visibility domain with respect to the Kobayashi distance that
	is taut and that $F: \OM \to \OM$ is a holomorphic map. Then exactly one of the following holds:
	\begin{enumerate}
		\item for each $z \in \OM$, the orbit $\{ F^{\nu}(z) \mid \nu \in \posint \}$ is
		relatively compact in $\OM$; or
		\item there exists a $\xi \in \bdy \OM$ such that $\lim_{\nu\to \infty}F^{\nu}(z) = \xi$ 
		for every $z\in \OM$, this convergence being uniform on compact subsets of $\OM$.
	\end{enumerate}
\end{theorem}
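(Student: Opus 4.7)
My plan is to follow the classical Wolff--Denjoy scheme, with tautness and visibility playing the roles usually filled by Gromov hyperbolicity or negative sectional curvature. The key dichotomy comes from the fact that $\OM$ taut implies $(\OM, \koba_{\OM})$ is a complete metric space (Abate), so closed $\koba_{\OM}$-balls are compact subsets of $\OM$ and $\koba_{\OM}(p, z) \to +\infty$ as $z \to \bdy\OM$ in the Euclidean sense, for every fixed $p \in \OM$. Using the standard contraction $\koba_{\OM}(F(z), F(w)) \leqslant \koba_{\OM}(z, w)$, one obtains: if $\{F^\nu(z_0)\}$ is relatively compact for some $z_0$, then for any $z \in \OM$, $\{F^\nu(z)\}$ lies within $\koba_{\OM}$-distance $\koba_{\OM}(z, z_0)$ of $\{F^\nu(z_0)\}$, hence inside a compact subset of $\OM$, hence is also relatively compact. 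So alternative~(1) holds, or no orbit is relatively compact; I work under the latter assumption with the goal of producing the boundary point $\xi$ in alternative~(2).

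By compactness of $\overline{\OM}$, I extract a subsequence $F^{\nu_k}(z_0) \to \xi \in \bdy\OM$. The first main step is to show that $F^{\nu_k}(z) \to \xi$ for every $z \in \OM$, whereupon Kobayashi-equicontinuity of $\{F^\nu\}$ (which translates to Euclidean equicontinuity on compacta) upgrades this to local uniform convergence. Suppose a sub-subsequence satisfies $F^{\nu_{k_j}}(z) \to \eta$ in $\overline{\OM}$ with $\eta \neq \xi$. Contractivity gives $\koba_{\OM}(F^{\nu_{k_j}}(z_0), F^{\nu_{k_j}}(z)) \leqslant \koba_{\OM}(z_0, z)$, uniformly in $j$. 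If $\eta \in \OM$, the triangle inequality keeps $\koba_{\OM}(F^{\nu_{k_j}}(z_0), \eta)$ bounded, contradicting its divergence to $+\infty$ (by completeness and $F^{\nu_{k_j}}(z_0) \to \xi \in \bdy\OM$). So $\eta \in \bdy\OM$, and I apply Definition~\ref{D:visibility_domain} to the distinct boundary points $\xi, \eta$ with some fixed $\lambda \geqslant 1$ and $\kappa \geqslant 0$: there exists a compact $K \subset \OM$ through which every $(\lambda, \kappa)$-almost-geodesic with endpoints in suitable disjoint neighbourhoods of $\xi$ and $\eta$ must pass. Constructing such $(\lambda, \kappa)$-almost-geodesics $\sigma_j$ joining $F^{\nu_{k_j}}(z_0)$ and $F^{\nu_{k_j}}(z)$ via \cite[Proposition~4.4]{Bharali_Zimmer}, the almost-geodesic inequalities bound their parameter-lengths uniformly in $j$ in terms of $\koba_{\OM}(z_0, z), \lambda, \kappa$; hence any point $p_j \in \sigma_j \cap K$ provided by visibility satisfies a uniform upper bound for $\koba_{\OM}(F^{\nu_{k_j}}(z_0), p_j)$. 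This contradicts $\koba_{\OM}(F^{\nu_{k_j}}(z_0), p_j) \to +\infty$, which holds because $p_j$ stays in the compact $K \subset \OM$ while $F^{\nu_{k_j}}(z_0) \to \xi \in \bdy\OM$.

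The hard part will be the uniqueness of $\xi$: ruling out a different subsequential limit $\eta \neq \xi$. Suppose $F^{\mu_k}(z_0) \to \eta \in \overline{\OM}$ with $\eta \neq \xi$, and set $r_k := |\mu_k - \nu_k|$. If $r_k$ is bounded along a subsequence (say $r_k \equiv j$ after further extraction), then $F^{\mu_k}(z_0) = F^{\nu_k}(F^j(z_0))$, and the propagation step above applied to the point $F^j(z_0) \in \OM$ gives $F^{\mu_k}(z_0) \to \xi$, contradicting $\eta \neq \xi$. If $r_k \to +\infty$, I invoke tautness for the sequence of self-maps $\{F^{r_k}\}$: a sub-subsequence either (a) converges locally uniformly to some holomorphic $g\colon \OM \to \OM$, in which case the factorisation $F^{\mu_k} = F^{\nu_k} \circ F^{r_k}$, combined with the uniform convergence of $F^{\nu_k} \to \xi$ on any compact neighbourhood of $g(z_0)$, forces $F^{\mu_k}(z_0) \to \xi$, again a contradiction; or (b) is compactly divergent. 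Case~(b) is the delicate sub-case: here I expect to use visibility more subtly, applying \cite[Proposition~4.4]{Bharali_Zimmer} to construct a direct $(\lambda, \kappa)$-almost-geodesic joining $F^{\nu_k}(z_0)$ and $F^{\mu_k}(z_0)$, using visibility to force passage through a fixed compact set, and then combining the factorisation $F^{\mu_k} = F^{\nu_k} \circ F^{r_k}$, contractivity, and $\koba_{\OM}$-length estimates along the almost-geodesic to extract the required contradiction. Once uniqueness is established, alternative~(2) follows.
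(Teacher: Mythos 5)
Your opening premise---that tautness implies $(\OM, \koba_{\OM})$ is complete and hence $\koba_{\OM}(p,z)\to +\infty$ as $z\to\bdy\OM$---is false, and it is load-bearing throughout your argument. Tautness does \emph{not} imply Kobayashi completeness (the implications go the other way: complete hyperbolic $\Rightarrow$ taut $\Rightarrow$ hyperbolic, and neither converse holds). The paper flags this explicitly just before its proof: it deliberately avoids assuming Cauchy completeness because this is not even known for bounded weakly pseudoconvex domains in $\C^n$ when $n\geqslant 3$, and the condition $M_{\OM}(r)\to 0$ from Theorem~\ref{T:M_Omega_zero} controls the \emph{rate} of blow-up of $\dkoba_{\OM}$ only weakly (e.g.\ $M_{\OM}(r)\lesssim r^{1/2}$ for caltrops is compatible a priori with finite boundary distance). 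Every one of your contradiction steps invokes this false consequence: the propagation of relative compactness via compactness of closed $\koba_{\OM}$-balls; the assertion that a sub-subsequential limit $\eta$ of $F^{\nu_{k_j}}(z)$ cannot lie in $\OM$ because ``$\koba_{\OM}(F^{\nu_{k_j}}(z_0),\eta)\to+\infty$''; and the final contradiction ``$\koba_{\OM}(F^{\nu_{k_j}}(z_0), p_j)\to+\infty$ because $p_j\in K\Subset\OM$ while $F^{\nu_{k_j}}(z_0)\to\bdy\OM$''. None of these hold without completeness.

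The gap is not cosmetic: the paper's proof has two cases precisely because completeness is unavailable. After invoking Abate's dichotomy (\cite[Theorem~2.4.3]{Abate_ItThTautMan}: $\{F^\nu\}$ relatively compact in $\hol(\OM;\OM)$ vs.\ compactly divergent) and using Theorem~\ref{T:bdy-vld_lmts_cnstnt} to make the boundary-valued locally uniform limits constant, the paper splits the compactly divergent case into Case~1, $\limsup_\nu \koba_{\OM}(F^\nu(o),o)=\infty$, handled via Proposition~\ref{P:iteration_seqs}, and Case~2, $\limsup_\nu \koba_{\OM}(F^\nu(o),o)<\infty$, handled by a Karlsson-type argument with the auxiliary function $G$ on $K\times K$. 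Your scheme, by assuming completeness, implicitly declares Case~2 vacuous and never addresses it; yet Case~2 is exactly the possibility that cannot be ruled out under the stated hypotheses. Separately, your ``delicate sub-case~(b)'' (when $F^{r_k}$ is compactly divergent) is left as an expectation rather than an argument; this is in effect a re-encounter with the paper's Case~2, so even absent the completeness issue your proof would be incomplete there.
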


The statement of the above theorem differs from that of \cite[Theorem~1.10]{Bharali_Zimmer}
in the one respect that the dichotomy presented in the above theorem holds true on any taut
visibility domain, and not just on Goldilocks domains. Furthermore, the hypothesis of
Theorem~\ref{T:Wolff_Denjoy_gen} does admit domains that are not Goldilocks domains\,---\,as
the reader will infer from Corollary~\ref{C:Wolff_Denjoy_example} below.

\smallskip

The proof of \cite[Theorem~1.10]{Bharali_Zimmer} is borne by two distinct ideas. The first is
the following heuristic, which is entirely a consequence of visibility: if we assume that there exist two
strictly increasing sequences $(\nu_i)_{i \geqslant 1},
(\mu_j)_{j \geqslant 1}\subset \posint$ with $F^{\nu_i}(o) \to \xi \in \bdy\OM$,
$F^{\mu_j}(o)\to \eta \in \bdy\OM$,
and $\xi \neq \eta$, then we arrive at a contradiction by analysing
$\koba_{\OM}(F^{\nu_i}(o), F^{\mu_j}(o))$. Briefly: with $\nu_i > \mu_j$, an estimate for
$\koba_{\OM}(F^{\nu_i}(o), F^{\mu_j}(o))$ based on the fact that $F$ is contractive
with respect to $\koba_{\OM}$ is incompatible with an estimate based on the
fact that every $(1, \kappa)$-almost-geodesic (with $\kappa > 0$) joining 
$F^{\nu_i}(o)$ to $F^{\mu_j}(o)$ must pass within a fixed distance of $o$. As this is a
consequence of visibility, this heurisitc informs the proof of Theorem~\ref{T:Wolff_Denjoy_gen}
too. \textbf{However,}
visibility alone does not a priori seem to explain the limits $F^{\nu_i}(o) \to \xi$ and
$F^{\mu_j}(o)\to \eta$ mentioned above. That explanation, under the
assumption that $\OM$ is taut, belongs to the realm described by $(a)$ earlier in this section.
It turns out that (still assuming that $\OM$ is taut) visibility alone is enough to justify these
limits. This is the purport of our results in Section~\ref{S:props_visibility}, which play a 
supporting role in the proof of  Theorem~\ref{T:Wolff_Denjoy_gen}, but may also be of
independent interest.

\smallskip

The dichotomy in the behaviour of the iterations in Theorem~\ref{T:Wolff_Denjoy_gen} is not quite
what is given by the classical Wolff--Denjoy theorem (i.e., Result~\ref{R:WD}). Where, among
the Wolff--Denjoy-type results cited above, the dichotomy given by Result~\ref{R:WD} does hold,
it is a consequence of the domains $\OM$ in question being contractible and of $\bdy\OM$ 
satisfying some non-degeneracy condition: some form of strict convexity; or
strong pseudoconvexity, as in \cite{Huang}; etc.
In view of the many examples presented
in \cite[Section~2]{Bharali_Zimmer}, and given Theorem~\ref{T:visibility-caltrops} about
caltrops, the boundaries of taut visibility domains do not generally have the type of
non-degeneracy mentioned above. 
However, with some conditions on
the topology of $\OM$ (as opposed to the geometry of $\bdy\OM$),   
we can use a result of Abate \cite{Abate_ItThCptDivSeq} to obtain a version of
Theorem~\ref{T:Wolff_Denjoy_gen} whose conclusions more closely resemble those of
Result~\ref{R:WD}. 

\begin{theorem}\label{T:Wolff_Denjoy_special}
	Suppose $\OM \subset \C^n$ is a visibility domain with respect to the Kobayashi distance that is taut,
	and that $\OM$ is of finite topological type. Suppose further that
	\[
	  H^j(\OM; \C) = 0 \quad\text{for each odd $j$, $1\leqslant j\leqslant n$}.
	\]
	Let $F: \OM \to \OM$ be a holomorphic map. Then exactly one of the following holds:
	\begin{enumerate}
		\item $F$ has a periodic point in $\OM$; or
		\item there exists a $\xi \in \bdy \OM$ such that $\lim_{\nu\to \infty}F^{\nu}(z) = \xi$
		for every $z\in \OM$, this convergence being uniform on compact subsets of $\OM$.
	\end{enumerate}
	In the first case, each orbit $\{ F^{\nu}(z) \mid \nu \in \posint \}$, $z\in \OM$, is relatively compact in $\OM$.	
\end{theorem}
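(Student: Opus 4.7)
The plan is to reduce Theorem~\ref{T:Wolff_Denjoy_special} to Theorem~\ref{T:Wolff_Denjoy_gen} together with an iteration theorem of Abate from \cite{Abate_ItThCptDivSeq}. I first apply Theorem~\ref{T:Wolff_Denjoy_gen} to $F$. Its case~(2) is precisely case~(2) of the present theorem, so in that branch there is nothing further to prove. In case~(1) of Theorem~\ref{T:Wolff_Denjoy_gen}, every orbit $\{F^{\nu}(z) \mid \nu \in \posint\}$ is relatively compact in $\OM$; equivalently, the sequence $\{F^{\nu}\}$ is \emph{not} compactly divergent in the sense of Abate.

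In this alternative I invoke Abate's iteration theorem: on a taut manifold of finite topological type whose cohomology satisfies $H^j(\OM;\C) = 0$ for all odd $j$ with $1 \leqslant j \leqslant n$, any holomorphic self-map whose iteration sequence is not compactly divergent must have a periodic point. The mechanism, which I would briefly recall, is that tautness makes $\{F^{\nu}\}$ a normal family, so one can extract a subsequential limit which, by Abate's theory, is a holomorphic retraction $\rho$ of $\OM$ onto a closed complex submanifold $N \subseteq \OM$ (the limit retract), and $F$ restricts to an automorphism of $N$. The stated cohomological vanishing, transferred to $N$ via $\rho$, forces a non-zero Lefschetz number of a suitable iterate of $F|_N$, producing a periodic point of $F|_N$ and hence of $F$.

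Finally, I verify the dichotomy. Cases~(1) and (2) are mutually exclusive, since a periodic point gives a finite orbit in $\OM$, which is incompatible with the full-orbit convergence to a single point $\xi \in \bdy\OM$. The additional claim that in case~(1) every orbit is relatively compact in $\OM$ is immediate: case~(1) is obtained by placing ourselves in case~(1) of Theorem~\ref{T:Wolff_Denjoy_gen}, which already asserts this.

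The main obstacle will be the careful matching of hypotheses with Abate's theorem: the cohomological vanishing stated here must be exactly what powers the Lefschetz-number argument on the limit retract $N$, and one has to check that the vanishing of odd-degree cohomology of $\OM$ transfers to a non-zero Euler characteristic of $N$ (for instance via the fact that $N$ is a holomorphic retract of $\OM$, so its cohomology is a direct summand of that of $\OM$). This bookkeeping, while not conceptually deep, is the point at which the topological hypothesis genuinely does the work of ruling out the periodic-point-free case.
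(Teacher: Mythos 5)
Your proof takes essentially the same route as the paper's: both reduce to Theorem~\ref{T:Wolff_Denjoy_gen} combined with Abate's Corollary~2.10 from \cite{Abate_ItThCptDivSeq}, and the difference in the order (you apply Theorem~\ref{T:Wolff_Denjoy_gen} first and then Abate; the paper applies Abate first) is inessential bookkeeping.

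However, there is a genuine gap in the step where you invoke Abate's result. You state the hypothesis of Abate's theorem as ``$H^j(\OM;\C) = 0$ for all odd $j$ with $1 \leqslant j \leqslant n$,'' which is exactly what Theorem~\ref{T:Wolff_Denjoy_special} gives you. But Abate's Corollary~2.10 requires the vanishing of $H^j(\OM;\mathbb{Q})$ for \emph{every} odd $j$ (together with finite-dimensionality in even degrees), not merely for $j \leqslant n$. The paper bridges this: since $\OM$ is a \emph{bounded taut} domain, it is pseudoconvex by Wu's theorem (Result~\ref{R:koba_facts_miscGeom}), and then H\"ormander's vanishing theorem gives $H^j(\OM;\C) = 0$ for all $j > n$. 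Combined with the universal coefficient theorem (to pass from $\C$ to $\mathbb{Q}$ coefficients and to relate ranks to the finite-topological-type hypothesis), this is precisely what makes the seemingly weak hypothesis of Theorem~\ref{T:Wolff_Denjoy_special} sufficient for Abate's result. You do flag ``matching of hypotheses with Abate's theorem'' as a potential obstacle, but you frame it as a concern about the Lefschetz/Euler-characteristic mechanism on the limit retract, rather than about the degree range of the cohomological vanishing. The latter is the point actually requiring a substantive argument, and it is missing from your proposal.
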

\noindent{Recall that for $\OM$ to have finite topological type means that the singular homology groups
$H_j(\OM; \Z)$ are of finite rank for all $j\in \N$.}

\smallskip

We point out that the domains to which Theorem~\ref{T:Wolff_Denjoy_special} applies need
\emph{not} be convex or biholomorphic to a convex domain (a fact that will be emphasised
by the corollary below). In this regard, Theorem~\ref{T:Wolff_Denjoy_special} bears relation to
\cite{Huang} by X.~Huang, in which the dichotomy presented in Result~\ref{R:WD} is established
for bounded topologically contractible strongly pseudoconvex domains. Loosely speaking, a version of the
heuristic discussed right after Theorem~\ref{T:Wolff_Denjoy_gen} appears in \cite{Huang}, although the
specifics that make this heuristic work in \cite{Huang} and for our result differ greatly. (Also, the arguments
in \cite{Huang} suggest that the dichotomy presented in Result~\ref{R:WD} would be very hard to
obtain for the domains of the generality that we consider.)

\smallskip

The final result of this subsection is meant to illustrate tangibly the range of domains\,---\,with an emphasis
on domains that need not be convex or biholomorphic to a convex domain, and with boundaries that
aren't even Lipschitz\,---\,to which the Wolff--Denjoy phenomenon extends.

\begin{corollary}\label{C:Wolff_Denjoy_example}
	Suppose $\OM \subset \C^n$ is either a bounded pseudoconvex domain of finite type
	or a caltrop. Suppose further that $\OM$ is of finite topological type and that
	\[
	  H^j(\OM; \C) = 0 \quad\text{for each odd $j$, $1\leqslant j\leqslant n$}.
	\]
	Let $F: \OM \to \OM$ be a holomorphic map. Then exactly one of the following holds:
	\begin{enumerate}
		\item $F$ has a periodic point in $\OM$; or
		\item there exists a $\xi \in \bdy \OM$ such that $\lim_{\nu\to \infty}F^{\nu}(z) = \xi$
		for every $z\in \OM$, this convergence being uniform on compact subsets of $\OM$.
	\end{enumerate}
	In the first case, each orbit $\{ F^{\nu}(z) \mid \nu \in \posint \}$, $z\in \OM$, is relatively compact in $\OM$.	
\end{corollary}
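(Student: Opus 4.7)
The plan is to derive the corollary as a direct application of Theorem~\ref{T:Wolff_Denjoy_special}. The topological assumptions on $\OM$ (finite topological type and the vanishing of odd-degree cohomology up to $n$) are already part of the hypothesis, so what one must verify, in each of the two cases (finite type, or caltrop), is that $\OM$ is a visibility domain with respect to the Kobayashi distance and that $\OM$ is taut. Once both properties are established for both families, the dichotomy asserted in the corollary is exactly what Theorem~\ref{T:Wolff_Denjoy_special} delivers.

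For visibility I would argue as follows. If $\OM$ is a bounded pseudoconvex domain of finite type, then \cite[Corollary~2.11]{Bharali_Zimmer} asserts that $\OM$ is a Goldilocks domain, and every Goldilocks domain is a visibility domain with respect to $\koba_{\OM}$ (as recorded in the discussion following Definition~\ref{D:visibility_domain}). If $\OM$ is a caltrop, then visibility is precisely the content of Theorem~\ref{T:visibility-caltrops}.

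For tautness, the finite-type case is standard: a bounded pseudoconvex domain with $\smoo^2$ boundary admits a bounded negative plurisubharmonic exhaustion function (Diederich--Fornaess), so it is hyperconvex and hence taut. For a caltrop $\OM$, tautness can be obtained from the existence of a local plurisubharmonic (or holomorphic) peak function at every boundary point. At points of $\bdy\OM \setminus \{q_1,\dots,q_N\}$ the boundary is $\smoo^2$-smooth and strongly Levi-pseudoconvex, so the classical construction supplies such functions. At each exceptional point $q_j$, a peak function can be read off directly from the solid-of-revolution structure in Definition~\ref{D:caltrop}: in the coordinates $\uni_j$, the coordinate function $z\mapsto z_n$ is holomorphic on $\OM\cap V_j$, has $\rprt(z_n) > 0$ throughout, and tends to $0$ only as $z\to q_j$; a Cayley-type modification bounded by $1$ in modulus then gives a local holomorphic peak function at $q_j$. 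Patching these local peak functions together via a standard Sibony-style construction produces a bounded negative plurisubharmonic exhaustion of $\OM$, proving hyperconvexity and hence tautness.

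The only step that is not entirely routine is the tautness argument for caltrops, but the very explicit H\"olderian-cusp model built into Definition~\ref{D:caltrop} reduces the construction of a local peak function at each $q_j$ to essentially a one-variable computation in the half-plane $\{\rprt(z_n)>0\}$, so no genuine obstacle arises. With visibility and tautness in hand for both classes of domains, Theorem~\ref{T:Wolff_Denjoy_special} applies and yields the conclusion of the corollary.
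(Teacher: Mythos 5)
Your overall strategy---verify visibility and tautness for both classes of domains and then invoke Theorem~\ref{T:Wolff_Denjoy_special}---is exactly the paper's, and your visibility argument is essentially the paper's too (finite-type $\Rightarrow$ Goldilocks $\Rightarrow$ visibility, and Theorem~\ref{T:visibility-caltrops} for caltrops). Where you diverge is in how you establish tautness. The paper settles the finite-type case by citing Kerzman--Rosay \cite[Proposition~2]{Kerzman_Rosay} directly (since $\smoo^2$-smooth boundary is in particular Lipschitz), and settles the caltrop case by proving Theorem~\ref{T:caltrops_taut}: one shows that $(\OM,\koba_{\OM})$ is Cauchy-complete by establishing $\koba_{\OM}(z_0,w)\to +\infty$ as $w\to\xi\in\bdy\OM$---using Forstneric--Rosay near the smooth strongly pseudoconvex boundary and Lemma~\ref{L:koba_lower_bnd} near the exceptional points---and then invokes Kiernan's theorem that completeness implies tautness. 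You instead propose hyperconvexity via local peak functions: use Diederich--Fornaess for the finite-type case, and at each $q_j$ compose $\pi_n\circ\uni_j$ (which has positive real part on $\OM\cap V_j$ and vanishes only at $q_j$) with a Cayley-type map into $\unitdisk$ to get a local holomorphic peak function, then patch. The key observation you implicitly rely on---that in the spike model $\rprt\big((\uni_j(z))_n\big)\to 0$ if and only if $z\to q_j$, so the Cayley-type function $|f_{q_j}|$ is bounded away from $1$ on $\OM\cap V_j$ away from $q_j$---is correct and is what makes the patching feasible. What each approach buys: yours is softer and shorter in principle, avoiding the explicit Kobayashi-metric estimates, but the ``standard Sibony-style construction'' is left as a black box and must be spelled out carefully, precisely because the caltrop boundary is not Lipschitz at the $q_j$ (so the usual Kerzman--Rosay or Demailly local-to-global hyperconvexity results do not apply off the shelf), whereas the paper's Cauchy-completeness route, though it requires Lemma~\ref{L:koba_lower_bnd}, reuses estimates that are already needed for the visibility argument and so comes nearly for free within the paper's architecture.
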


The proofs of all the results of this subsection are presented in Section~\ref{S:Wolff_Denjoy} below.
\medskip

\section{Technical preliminaries}\label{S:prelims_technical}

This section is dedicated to introducing notation that will recur throughout this paper, and
some known results that will play a supporting role
in the proofs presented in the following sections. This section is divided into three parts. We begin
with some notation (some of which has appeared in passing in Section~\ref{S:intro}) that we shall need.

\subsection{Common notations}
We fix the following notation, which we shall frequently need.
\begin{enumerate}
	\item For $v \in\C^n$, $\|v\|$ will denote the Euclidean norm.
	Given points $z, w \in \C^n$, we shall commit a mild abuse of notation by not distinguishing
	between points and tangent vectors, and denote the Euclidean distance between them as
	$\|z - w\|$.
	\item The maps $\pi_j : \C^n\to \C$, $j = 1,\dots, n$, will denote the projection onto the
	$j$-th factor.
	\item $\unitdisk$ will denote the open unit disk in $\C$ with centre at $0$, 
	while $D(a, r)$ will denote the open disk in $\C$ with radius $r > 0$ and centre $a$.
	\item Given an open set $U\subset \C^n$ and a $\smoo^2$-smooth function
	$\rho : U\to \R$, we will denote by $\levi(\rho)(z; v)$ the quadratic form (called the
	\emph{Levi-form} of $\rho$) determined by the complex Hessian of $\rho$ at $z\in U$:
	\[
	  \levi(\rho)(z; v) \defeq
	  \sum_{1\leqslant j,\,k\leqslant n}\pdshort{z_j\zbar_k}{2}\rho(z)v_j\overline{v}_k
	\]
	for each $v\in \C^n$ (equivalently, for each $v\in T^{1,0}_{z}U$).
\end{enumerate}

\subsection{Facts relating to the Kobayashi geometry of domains}
Let $\OM$ be a domain in $\C^n$. We shall assume that the reader is familiar with the Kobayashi
pseudodistance $\koba_{\OM}$ and the Kobayashi--Royden pseudometric $\dkoba_{\OM}$.
The only comment concerning the basics of these objects that we shall make is that $\koba_{\OM}$ and
$\dkoba_{\OM}$ are related as follows:
\[
   \koba_{\OM}(z, w) =
  \inf_{\gamma\in \mathscr{C}(z,w)}\int_0^1\dkoba_{\OM}(\gamma(t); \gamma'(t))\,dt
  \quad\forall z, w\in \OM,
\]
where $\mathscr{C}(z,w)$ is the set of all piecewise $\smoo^1$
paths $\gamma : [0, 1]\to \OM$ satisfying $\gamma(0) = z$ and $\gamma(1) = w$. This is a
result by Royden \cite{Royden}

\smallskip

We shall need the following estimate on $\koba_{\OM}$:

\begin{result}[{\cite[Proposition~3.5-(1)]{Bharali_Zimmer}}]\label{R:koba_facts_misc}
	Let $\OM$ be a bounded domain in $\C^n$. Fix an open ball $\mathbb{B}(\OM)$ with centre
	$0\in \C^n$ that is so large that $\OM\Subset \mathbb{B}(\OM)$. Let
	\[
	  c \defeq \inf\nolimits_{x\in \overline{\OM},\,\|v\|=1}\dkoba_{\mathbb{B}(\OM)}(x; v).
	\]
	Then, $\koba_{\OM}(z, w) \geqslant c\|z - w\|$ for every $z, w\in \OM$.
\end{result}

Tautness is closely tied to metric geometry associated to the Kobayashi pseudodistance. For a domain
$\OM\subset \C^n$, $n\geq 2$, being taut provides additional information about the complex geometry
of $\OM$. We collect a couple of observations of this nature in the following:

\begin{result}\label{R:koba_facts_miscGeom}
	Let $\OM\subset \C^n$ be a taut domain. Then:
	\begin{enumerate}
		\item\label{Cnclsn:taut_domains_cont} \emph{(see, for instance, \cite[Proposition~3.5.13]{Jarnicki_Pflug})}
		$\dkoba_{\OM}$ is continuous on $\OM\times \C^n$.
		
		\item\label{Cnclsn:taut_domains_pcvx} \emph{(\cite[Theorem~F]{Wu})} If $\OM$ is bounded, then it is
		pseudoconvex.
	\end{enumerate}
\end{result}

In order to prove Corollary~\ref{C:Wolff_Denjoy_example}, we would need to prove that caltrops
(we shall show in the next section that caltrops indeed exist) are taut. The following result will be
useful in this proof.

\begin{result}[{\cite[Corollary~2.4]{Forstneric_Rosay}}]\label{R:koba_dist_lower_bd}
	Let $\OM$ be a bounded domain in $\C^n$ whose boundary is of class $\smoo^2$ and
	strongly Levi-pseudoconvex in $\bdy\OM$-neighbourhoods of two distinct points
	$\xi, \eta\in \bdy\OM$. Then, there is a constant  $C > 0$, which depends on
	$\xi$ and $\eta$, and open neighbourhoods $V_{\xi}$ and $V_{\eta}$ in $\C^n$ of
	$\xi$ and $\eta$, respectively such that
	\[
	  \koba_{\OM}(a, b) \geqslant 2^{-1}\log\frac{1}{\delta_{\OM}(a)}
	  + 2^{-1}\log\frac{1}{\delta_{\OM}(b)} - C
	\]
	for each point $a\in \OM\cap V_{\xi}$ and $b\in \OM\cap V_{\eta}$.
\end{result}

The proof of Theorem~\ref{T:visibility-caltrops} will, at a certain stage, require a precise estimate from
below for $\dkoba_{\OM}$\,---\,where $\OM$ is a bounded domain in $\C^n$\,---\,in the vicinity of
a strictly pseudoconvex point in $\bdy\OM$. Such an estimate is provided by a result of 
D.~Ma \cite[Theorem~B]{Ma}.
Before stating the result that we need, we ought to mention that Ma's result is stated for
domains for which the part of the boundary that is strongly Levi-pseudoconvex is
$\smoo^3$-smooth. However, Ma's techniques are still valid \emph{up to a point}
when this regularity condition is weakened to $\smoo^2$-smooth\,---\,the modifications
required, in essence, are to replace all
occurences of $O(\|x\|^3)$ by $o(\|x\|^2)$
in those steps of the argument that invoke Taylor's theorem. Where this does not
suffice, Balogh and Bonk\,---\,in the sketch of their proof of
\cite[Proposition~1.2]{Balogh_Bonk}\,---\,provide the essential modification needed.
While Balogh--Bonk state their estimate for strongly pseudoconvex domains with
$\smoo^2$-smooth boundary, their proof actually involves \emph{local} estimates, which lead to
the inequalities below. With these clarifications, we state:

\begin{result}[paraphrasing {\cite[Theorem~B]{Ma}} and
{\cite[Proposition~1.2]{Balogh_Bonk}}]\label{R:koba_metric_M-BB} 
	Let $\OM$ be a bounded domain in $\C^n$, $n\geqslant 2$. Let $\mani_0$ be a $\bdy{\OM}$-open
	set that is a $\smoo^2$-smooth hypersurface. Assume that $\mani_0$ admits
	a defining function $\phi$ that is of class $\smoo^2$ on some open set containing $\mani_0$ and
	that there exists a small constant $\sigma > 0$ such that $\levi(\phi)(\xi; v) \geqslant \sigma\|v\|^2$
	at each $\xi\in \mani_0$ and for all $v\in \C^n$. Let $\mani_1\varsubsetneq \mani_0$ be
	a compact subset. Then, there exists
	an $\overline{\OM}$-open neighbourhood, say $\mathcal{V}$, of $\mani_1$ and a constant $C > 0$
	such that
	\[
	  \dkoba_{\OM}(z; v) \geqslant (1 - C\delta_{\OM}(z)^{1/2})\,\frac{\sigma\|v\|^2}{\delta_{\OM}(z)^{1/2}}
	\]
	  for every $z\in \mathcal{V}\cap \OM$ and for every $v\in \C^n$.
\end{result}
 
\begin{remark}
In fact, a much more precise estimate is provided by Ma and Balogh--Bonk than the one stated in the
above result. However, in order to state the latter estimate, one would need to provide certain definitions
that would be a digression from the present discussion. The lower bound for $\dkoba_{\OM}$
stated in Result~\ref{R:koba_metric_M-BB} suffices for our purposes.
\end{remark}

The following result by Sibony will also play an important role in the proof of Theorem~\ref{T:visibility-caltrops}:

\begin{result}[paraphrasing {\cite[Proposition~6]{Sibony}}]\label{R:koba_metric_lower}
	Let $\OM$ be a domain in $\C^n$ and let $p \in \OM$. Suppose $u$ is a negative plurisubharmonic
	function that is of class $\smoo^2$ in a neighbourhood of $p$ and assume that
	\[
	  \levi(u)(p; v) \geqslant c\|v\|^2 \quad \forall v\in \C^n,
	\]
	where $c$ is some positive constant. Then, there is a universal constant $\alpha > 0$ such that
	\[
	  \dkoba_{\OM}(p; v) \geqslant \left(\frac{c}{\alpha}\right)^{1/2}\!\frac{\|v\|}{|u(p)|^{1/2}}.
	\]
\end{result}

\begin{remark}
An important part of \cite{Sibony} is the construction of a pseudometric on $T^{1,0}\OM$\,---\,which
is known today as the Sibony pseudometric\,---\,that is dominated by the Kobayashi pseudometric. The
lower bound in Result~\ref{R:koba_metric_lower} is actually a lower bound for the Sibony pseudometric,
from which the lower bound above for $\dkoba_{\OM}(p; \bcdot)$ is obtained.
\end{remark}

In concluding this section, we collect a few
\smallskip

\subsection{Facts relating to length-minimizing curves}
The fundamental fact that we presuppose in this subsection is that if $\OM$ is a bounded domain in
$\C^n$, then for any two points in $\OM$ and for any $\lambda\geqslant 1$ and $\kappa > 0$
there exists a $(\lambda, \kappa)$-almost-geodesic joining these points: this is the content of Proposition~4.4
of \cite{Bharali_Zimmer} by Bharali--Zimmer. With this understanding, we first present:

\begin{result}[{\cite[Proposition~4.3]{Bharali_Zimmer}}]\label{R:Lipschitz}
	Let $\OM$ be a bounded domain in $\C^n$. For any $\lambda \geqslant 1$ there exists a
	$C = C(\lambda)>0$ such that any $(\lambda, \kappa)$-almost-geodesic
	(where $\kappa\geqslant 0$) $\sigma: [a, b]\to \OM$ is $C$-Lipschitz (with respect to the Euclidean distance). 
\end{result}

We shall also need the following simple lemma, whose proof is essentially a single line following from the definition
of $(\lambda, \kappa)$-quasi-geodesics and the triangle inequality. To clarify: a
\emph{$(\lambda, \kappa)$-quasi-geodesic} in $\OM$ is a function $\sigma: I\to \OM$, where $I$
is an interval, satisfying
the property (\ref{item:quasi}) stated in Definition~\ref{D:lambda_kappa}.

\begin{lemma}\label{L:basic_but_important}
	Let $\OM$ be a bounded domain in $\C^n$. If $\sigma: [a,b]\to \OM$ is a $(1,\kappa)$-quasi-geodesic,
	then for all $t \in [a,b]$ we have
	\[
	  \koba_{\OM}(\sigma(a), \sigma(b))\leqslant \koba_{\OM}(\sigma(a),\sigma(t))
	  + \koba_{\OM}(\sigma(t), \sigma(b))
	  \leqslant \koba_{\OM}(\sigma(a),\sigma(b)) + 3\kappa.
	\]
\end{lemma}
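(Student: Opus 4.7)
The plan is to derive both inequalities directly from the definition of a $(1,\kappa)$-quasi-geodesic (i.e., part \eqref{item:quasi} of Definition~\ref{D:lambda_kappa} with $\lambda = 1$) combined with the triangle inequality for $\koba_{\OM}$.

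First I would handle the left inequality, which is simply the triangle inequality applied to the three points $\sigma(a), \sigma(t), \sigma(b)$ in the pseudometric space $(\OM, \koba_{\OM})$. This does not use the quasi-geodesic hypothesis at all.

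For the right inequality, I would apply the upper bound from the $(1,\kappa)$-quasi-geodesic condition to the two sub-segments: this yields $\koba_{\OM}(\sigma(a),\sigma(t)) \leqslant (t-a) + \kappa$ and $\koba_{\OM}(\sigma(t),\sigma(b)) \leqslant (b-t) + \kappa$, whose sum is at most $(b-a) + 2\kappa$. I would then apply the lower bound of the quasi-geodesic condition to the pair $(a,b)$ to get $(b-a) \leqslant \koba_{\OM}(\sigma(a),\sigma(b)) + \kappa$. Chaining these two estimates gives the claimed bound with the constant $3\kappa$.

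There is no real obstacle here; the only thing to keep in mind is the bookkeeping of the additive constant: two applications of the upper bound each contribute a $\kappa$, and the single application of the lower bound contributes another $\kappa$, totalling $3\kappa$. No extra analytic input (e.g., absolute continuity, the infinitesimal Kobayashi metric, or the length-realising property) is needed, which is why the statement holds for arbitrary $(1,\kappa)$-quasi-geodesics and not only for $(1,\kappa)$-almost-geodesics.
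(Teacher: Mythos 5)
Your proof is correct and is precisely the argument the paper has in mind; the paper merely remarks that the lemma ``is essentially a single line following from the definition of $(\lambda,\kappa)$-quasi-geodesics and the triangle inequality,'' which is exactly your decomposition (triangle inequality for the left bound, two applications of the upper estimate plus one of the lower estimate for the right bound, giving $2\kappa + \kappa = 3\kappa$).
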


\section{Caltrops exist}\label{S:caltrops}
In this section, we shall construct explicit examples of caltrops. To begin with, we will construct with some care a caltrop
whose boundary has one outward-pointing cusp. We shall then abstract features of this construction to describe
briefly the construction of caltrops with any (finite) number of outward-pointing cusps. Our constructions will be
in $\C^2$ but\,---\,as will become clear\,---\,this is only for simplicity of notation.
\smallskip

We shall call the subset $\OM\cap V_{j}$, $j = 1,\dots, N$, where $V_j$ is as in Definition~\ref{D:caltrop},
a \emph{spike}.

\subsection{A caltrop with a single spike}\label{SS:caltrop_1-spike}
Let $A$ and $\beta$ be positive numbers and let $\psi: [-A, \beta]\to [0, +\infty)$ be a continuous function that is of
class $\smoo^2$ on $(-A, \beta)$ such that
\begin{enumerate}
	\item $\psi(t) \defeq (t + A)^p$ for every $t\in [-A, -B]$, and
	\item $\psi(t) \defeq \sqrt{\beta^2 - t^2}$ for every $t\in (0, \beta)$,
\end{enumerate}
where $B \in (0, A)$ and $p \in (1, 3/2)$. We shall consider the following ``solid of revolution'' given by
\[
  \OM \defeq \{(z, w)\in \C^2 : |z|^2 + |\iprt{w}|^2 < C\psi(\rprt{w})^2, \ -A <\rprt(w) < \beta\},
\]
where $C > 0$ is a small constant whose magnitude we shall specify presently. Let us write
\[
  \rho(z, w) \defeq |z|^2 + |\iprt{w}|^2 - C\psi(\rprt{w})^2, \quad
  (z,w) \in \{(z, w)\in \C^2 : -A < \rprt(w) < \beta\}.
\]
It is easy to check that $\rho$ is a $\smoo^2$-smooth defining function for the real hypersurface
$\bdy\OM \cap \{(z, w)\in \C^2 : -A < \rprt(w) < \beta\}$.
\smallskip

We compute:
\begin{align*}
  \pdshort{z\zbar}{2}\rho &\equiv 1, \\
  \pdshort{z\wbar}{2}\rho &= \pdshort{\zbar w}{2}\rho \equiv 0, \\
  \pdshort{w\wbar}{2}\rho(z, w) &= \frac{1}{2}
  	- \frac{C}{2}\left(\psi^{\prime\prime}(\rprt{w})\psi(\rprt{w}) + \psi'(\rprt{w})^2\right),
\end{align*}
wherever $\rho$ is of class $\smoo^2$. In particular, we have
\begin{equation}\label{E:w-wbar_positive1}
  \pdshort{w\wbar}{2}\rho(z, w) - \frac{1}{2} = 
  	- \frac{Cp(2p-1)}{2}(\rprt{w} + A)^{2(p-1)} \nearrow 0 \text{ as $\rprt{w}\searrow -A$.}
\end{equation}
Furthermore, as $\psi$ is of class $\smoo^2$ on $(-A, \beta)$, we can, by choosing $C > 0$ sufficiently small,
ensure that
\begin{equation}\label{E:w-wbar_positive2}
  \pdshort{w\wbar}{2}\rho(z, w) \geqslant \frac{1}{4} \quad \forall w : -A < \rprt{w}\leqslant 0.
\end{equation}
From \eqref{E:w-wbar_positive1} and \eqref{E:w-wbar_positive2}, we conclude that
$\left.\rho\right|_{\{(z, w)\in \C^2 : -A < \rprt{w} < \delta\}}$ is strictly plurisubharmonic for some positive
constant $\delta \ll 1$. In particular, $\bdy\OM$ is strongly Levi-pseudoconvex at each point
on $\bdy\OM \cap \{(z, w)\in \C^2 : -A < \rprt{w} \leqslant 0\}$. Of course, by construction\,---\,by the
condition $(2)$ on $\psi$, to be precise\,---\,$\bdy\OM$ is strongly Levi-pseudoconvex at each point
on $\bdy\OM \cap \{(z, w)\in \C^2 :  \rprt{w} \geqslant 0\}$. The other properties that $\OM$ must have
for it to be a caltrop follow from the condition $(1)$ on $\psi$.

\subsection{A caltrop with many spikes}
A slight modification of the details described in the previous subsection allows us to show the existence of caltrops with
many spikes. To this end, let us fix constants $A_1,\dots A_N > 1$ and consider a collection of continuous
functions $\psi_j : [-A_j, \beta_j]\to [0, +\infty)$\,---\,where each $\beta_j$ is a constant with
$\beta_j > -1$\,---\,that are of class $\smoo^2$ on $(-A_j, \beta_j)$, such that
\[
  \psi_j(t) \defeq (t + A_j)^{p_j} \quad \forall t\in [-A_j, -B_j],
\]
and where $B_j \in (1, A_j)$, $p_j\in (1, 3/2)$, $j =1,\dots, N$. The precise values of the constants $\beta_j$
and the properties of each
$\left.\psi_j\right|_{-B_j < t \leqslant \beta_j}$, $j =1,\dots, N$, are determined by the construction
that follows. Consider the ``hypersurfaces of revolution''
\[
  \hypsrf_j \defeq \{(z, w)\in \C^2 : |z|^2 + |\iprt{w}|^2 =
  C_j\psi_j(\rprt{w})^2, \ -A_j \leqslant \rprt(w) \leqslant \beta_j\},
\]
where each $C_j$ is a positive constant whose value we shall fix appropriately. Now, consider the Euclidean
unit sphere $S^3\subset \C^2$ and pick $N$ distinct points $p_1,\dots, p_N\in S^3$, $N\geqslant 2$. Fix
unitary transformations $\uni_j$ (relative to the standard Hermitian inner product on $\C^2$) such that
\[
 \uni_j(0, -1) = p_j
\]
for each $j = 1, \dots, N$. Now consider the half-spaces
$\Sigma_j \defeq \{(z, w)\in \C^2 : \rprt(w) < -1 + \delta_j\}$, where each $\delta_j > 0$ is a
\emph{small} constant. Let
\[
  \mathsf{C}_j \defeq S^3\cap \uni_j(\Sigma_j),
\]
$j = 1,\dots, N$; these are small caps on the sphere. It follows from the discussion in the previous
subsection that, by adjusting the values of the constants $B_j, \beta_j, C_j$ and $\delta_j$ introduced above
appropriately, we can define each
$\psi_j$, $j = 1,\dots, N$, in such a way that the set
\[
 \mathcal{S} \defeq \left( S^3\setminus \cup_{j = 1}^N\mathsf{C}_j \right)\bigcup
  				\left( \cup_{j = 1}^N\uni_j(\hypsrf_j) \right)
\]
is a compact topological submanifold such that $\mathcal{S}\setminus\{q_1,\dots, q_N\}$\,---\,where
$q_j \defeq \uni_j(0, -A_j)$, $j = 1,\dots, N$\,---\,is
of class $\smoo^2$ and is strongly Levi-pseudoconvex at each of its points. It is then easy to show that the
bounded component of $\C^2\setminus \mathcal{S}$ is a caltrop.
\smallskip

The next subsection is, strictly speaking, unrelated to the issue of the existence of
caltrops. But, having shown that caltrops in $\C^2$ exist, it is easy to see that the
construction above can be generalized to $\C^n$ for every $n\geqslant 2$. We would like 
to extend the Levi-form calculation in subsection~\ref{SS:caltrop_1-spike} to higher dimensions.
This will be needed in the proof of Theorem~\ref{T:visibility-caltrops}. Thus, we conclude
this section with the following:

\subsection{A Levi-form calculation for caltrops}
We would like\,---\,in proving Theorem~\ref{T:visibility-caltrops}\,---\,to observe the notation
introduced in Definition~\ref{D:caltrop}. Thus, we present the following lemma that follows
a calculation analogous to the one in subsection~\ref{SS:caltrop_1-spike}.

\begin{lemma}\label{L:leviform}
	 Let $\OM\subset \C^n$, $n\geqslant 2$, be a caltrop. Let $q$ denote one of the points
	 $\{q_1,\dots, q_N\}\subset \bdy{\OM}$ (say $q_{j^*}$) as in Definition~\ref{D:caltrop}\,---\,i.e., one of the
	 tips of a spike of $\OM$. Let $\psi:[0, A] \to [0,+\infty)$, $V\ni q$ and $p\in (1, 3/2)$ denote the data
	 associated to $q$ by Definition~\ref{D:caltrop}. 
	  Let $(z_1,\dots, z_n)$ represent the system of global holomorphic
	 coordinates centered at $q$\,($=q_{j^*}$) obtained by the transformation of the product
	 coordinates on $\C^n$ by the map $\uni_{j^*}$. Let us abbreviate
	$(z_1,\dots, z_{n-1}, z_n)$ as $(z', z_n)$. Then,
	 \begin{enumerate}[leftmargin=22pt]
	 	\item The function
		\[
		   \rho(z) \defeq \iprt(z_n)^2 + \|z'\|^2 - \psi( \rprt(z_n) )^2, \quad (z_1,\dots, z_{n-1})\in \C^{n-1}, 
		   \ z_n: 0 < \rprt(z_n) < A,
		 \]
		 is a defining function of $\uni_{j^*}(\bdy{\OM})\cap \{(z_1,\dots, z_n) \mid 0 < \rprt(z_n) < A\}$.
		 
		 \item The Levi-form of $\rho$ is given by
		 \begin{multline*}
		   \levi(\rho)(z; v) = \|\,(v_1,\dots, v_{n-1})\,\|^2 + \left(\frac{1}{2}
		   - \frac{1}{2}\big(\psi^{\prime\prime}(\rprt{z_n})\psi(\rprt{z_n}) + \psi'(\rprt{z_n})^2\big)\right)|v_n|^2 \\
		    \forall z\in \uni_{j^*}(\bdy{\OM})\cap \{(z_1,\dots, z_n) \mid 0 < \rprt(z_n) < A\} \text{ and }
		   \forall v\in \C^n.
		 \end{multline*}
	\end{enumerate}	   
\end{lemma}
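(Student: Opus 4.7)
The proof is a direct computation, generalising the one-spike
calculation of subsection~\ref{SS:caltrop_1-spike} to arbitrary
dimension. It splits naturally into the two assertions, and the bulk of
the work for (2) is bookkeeping with Wirtinger derivatives.

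For part~(1), I would verify that $\rho$ is of class $\smoo^2$ on
$U \defeq \{(z_1,\dots,z_n) \mid 0 < \rprt(z_n) < A\}$ and that
$\nabla\rho$ does not vanish on $U$ (this is stronger than what is
needed). Smoothness is immediate from the hypothesis that $\psi$ is of
class $\smoo^2$ on $(0,A)$. For the gradient I would inspect
$\bdy \rho / \bdy(\rprt z_n) = -2\,\psi(\rprt z_n)\,\psi'(\rprt z_n)$.
The bound $\psi(x)\geqslant x^{p}/C$ in Definition~\ref{D:caltrop} gives
$\psi > 0$ on $(0,A)$, while strict monotonicity of $\psi$ combined with
$\psi'$ being (merely) increasing on $(0,A)$ forces $\psi' > 0$ there:
if $\psi'(x_0) \leqslant 0$ at some $x_0 \in (0,A)$, then
$\psi' \leqslant 0$ on $(0,x_0]$, contradicting the strict monotonicity
of $\psi$. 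Hence this partial is non-zero throughout $U$. Since the
description of $\uni_{j^*}(\OM \cap V_{j^*})$ in
Definition~\ref{D:caltrop} is precisely $\{\rho < 0\} \cap U$, we have
$\uni_{j^*}(\bdy\OM) \cap U = \{\rho = 0\} \cap U$, and $\rho$ is a
defining function for this portion of the transformed boundary.

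For part~(2), I would compute the Wirtinger second partials of $\rho$
directly. Since $\|z'\|^2 = \sum_{j < n} |z_j|^2$ is independent of
$z_n$, and $y^2 - \psi(x)^2$ (writing $z_n = x + iy$) depends only on
$z_n$ and $\overline{z}_n$, the Hermitian matrix of second partials is
block-diagonal: one has $\pdshort{z_j\zbar_k}{2}\rho = \delta_{jk}$ for
$1 \leqslant j,k \leqslant n-1$, $\pdshort{z_j\zbar_n}{2}\rho = 0$ for
$1 \leqslant j \leqslant n-1$, and, using
$\pdshort{z_n\zbar_n}{2} = \tfrac{1}{4}\bigl(\pdshort{x}{2} + \pdshort{y}{2}\bigr)$,
\[
  \pdshort{z_n\zbar_n}{2}\rho
  \;=\; \tfrac{1}{2} - \tfrac{1}{2}\bigl(\psi''(\rprt z_n)\,\psi(\rprt z_n) + \psi'(\rprt z_n)^2\bigr).
\]
Assembling these into the Hermitian form
$\sum_{j,k}\pdshort{z_j\zbar_k}{2}\rho\,v_j\overline{v}_k$ produces
exactly the stated expression for $\levi(\rho)(z;v)$.

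I do not expect any serious obstacle. The one point easy to overlook is
the positivity of $\psi'$ on $(0,A)$ used in~(1), which is not stated
verbatim in Definition~\ref{D:caltrop} but follows from the listed
monotonicity properties as above; the rest is the Wirtinger bookkeeping
already carried out for $n = 2$ in subsection~\ref{SS:caltrop_1-spike}.
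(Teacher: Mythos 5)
Your proof is correct and follows essentially the same route as the paper's: verify $d\rho\neq 0$ on the relevant set (the paper simply says "a simple calculation reveals..."), and then compute the Wirtinger second partials to obtain the block-diagonal complex Hessian. The only difference is that you spell out in more detail why $\psi'>0$ on $(0,A)$, a point the paper's proof elides.
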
 
\begin{proof}
A simple calculation reveals that $d\rho(z) \neq 0$ at each 
$z\in \uni_{j^*}(\bdy{\OM})\cap \{(z_1,\dots, z_n) \mid 0 < \rprt(z_n) < A\}$. This, together with the
explicit description of each spike of $\OM$, establishes that $\rho$ is a (local) defining function
for the stated piece of $\bdy{\OM}$.

\smallskip

The calculations required for determinining the Levi-form are analogous to those in
subsection~\ref{SS:caltrop_1-spike}. Specifically:
\begin{align*}
  \pdshort{z_j\zbar_j}{2}\rho &\equiv 1 \; \; \text{for $j = 1,\dots, n-1$,}\\
  \pdshort{z_j\zbar_k}{2}\rho &\equiv 0 \; \; \text{for $j\neq k$, $1\leqslant j, k\leqslant n$}, \\
  \pdshort{z_n\zbar_n}{2}\rho(z', z_n) &= \frac{1}{2}
  	- \frac{1}{2}\left(\psi^{\prime\prime}(\rprt{z_n})\psi(\rprt{z_n}) + \psi'(\rprt{z_n})^2\right),
\end{align*}
wherever $\rho$ is of class $\smoo^2$\,---\,which is the case for each
$z : (z', z_n)\in \C^{n-1}\times \{z_n \mid 0 < \rprt(z_n) < A\}$. From this, the expression for
$\levi(\rho)(z; v)$ follows.
\end{proof}
\smallskip

\section{General properties of visibility domains}\label{S:props_visibility}

In this section, we shall demonstrate three properties of visibility domains with respect to
the Kobayashi distance. The first two results will seem to be of a slightly technical nature. However,
the three results together form the crux of the argument underlying the observation\,---\,made
in Section~\ref{S:intro}\,---\,that several results that were shown for Goldilocks domains in
\cite{Bharali_Zimmer} actually hold true for visibility domains with respect to the Kobayashi distance.

\smallskip

The proof of the first of these results, Proposition~\ref{P:iteration_seqs}, is based on an argument
developed by Karlsson in \cite{Karlsson}. 
Its near-resemblance to Theorems~\ref{T:Wolff_Denjoy_gen}~and~\ref{T:Wolff_Denjoy_special}
is suggestive. Of course, the conclusion \eqref{E:conclsn_all_z} is weaker than what constitutes a
Wolff--Denjoy-type theorem, but the domains appearing in Proposition~\ref{P:iteration_seqs}
are\,---\,in contrast to those in the above-mentioned theorems\,---\,\textbf{merely}
visibility domains with respect to the Kobayashi distance. Proposition~\ref{P:iteration_seqs}
is an indication that some of the results stated in \cite{Bharali_Zimmer} for Goldilocks domains
might be true for visibility domains with respect to the Kobayashi distance. 

\begin{proposition}\label{P:iteration_seqs}
	Let $\OM \subset \C^n$ be a visibility domain with respect to the Kobayashi distance.
	Let $F:\OM\to \OM$ be a holomorphic map. Let
	$(\nu_j)_{j \geqslant 1}$ and $(\mu_j)_{j \geqslant 1}$ be two sequences of
	positive integers with $\nu_j, \mu_j\to \infty$. Suppose   
	\begin{equation}\label{E:blowing_up}
	  \lim_{j \to \infty} \koba_{\OM}( F^{\nu_j}(o), o) = \infty \quad{and}
	  \quad \lim_{j \to \infty} \koba_{\OM}( F^{\mu_j}(o), o) = \infty
	\end{equation}
	for some $o \in \OM$. Then there exists a $\xi \in \bdy{\OM}$ such that 
	\begin{equation}\label{E:conclsn_all_z}
 	  \lim_{j \rightarrow \infty} F^{\nu_j}(z) = \xi = \lim_{j \rightarrow \infty} F^{\mu_j}(z)
 	\end{equation}
	for every $z \in \OM$. 
\end{proposition}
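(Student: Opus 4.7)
The plan is to first establish, as the crux, that the two subsequential limits $\xi$ and $\eta$ of $(F^{\nu_j}(o))$ and $(F^{\mu_j}(o))$ must coincide, and then to upgrade convergence at $o$ to convergence at every $z \in \OM$ via a short visibility argument. By the boundedness of $\OM$ and passing to subsequences, I can assume $F^{\nu_j}(o) \to \xi$ and $F^{\mu_j}(o) \to \eta$ in $\overline\OM$; the hypothesis $\koba_\OM(F^{\nu_j}(o), o) \to \infty$, together with the continuity of $\koba_\OM$ on $\OM \times \OM$, forces $\xi \in \bdy\OM$ (and similarly $\eta \in \bdy\OM$), since an interior limit would keep Kobayashi distances bounded. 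Once $\xi = \eta$ is shown, the full sequences converge to this common value by a standard subsequence principle.

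For the crux, I assume for contradiction that $\xi \neq \eta$ and, after a further subsequence, $\nu_j > \mu_j$. Fix $\kappa > 0$ small. Pick disjoint $\overline\OM$-open neighbourhoods $V \ni \xi$ and $W \ni \eta$; visibility furnishes a compact $K \subset \OM$ through which every $(1, \kappa)$-almost-geodesic from $W$ to $V$ must pass. For $j$ large, $F^{\mu_j}(o) \in W$ and $F^{\nu_j}(o) \in V$, so \cite[Proposition~4.4]{Bharali_Zimmer} yields a $(1, \kappa)$-almost-geodesic $\sigma_j : [0, T_j] \to \OM$ joining them, passing through some $p_j := \sigma_j(t_j) \in K$. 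Applying Lemma~\ref{L:basic_but_important} (for $(1,\kappa)$-quasi-geodesics), the triangle inequality at $o$ (absorbing $D := \sup_{p \in K} \koba_\OM(o, p)$ into a constant), and the contractivity identity $\koba_\OM(F^{\mu_j}(o), F^{\nu_j}(o)) = \koba_\OM(F^{\mu_j}(o), F^{\mu_j}(F^{\nu_j - \mu_j}(o))) \leqslant \koba_\OM(o, F^{\nu_j - \mu_j}(o))$, I obtain the key inequality
\[
  a_{\nu_j - \mu_j} \;\geqslant\; a_{\nu_j} + a_{\mu_j} - C, \qquad a_k := \koba_\OM(o, F^k(o)),\ C := 2D + 3\kappa,
\]
which I denote $(\star)$.

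The main technical obstacle is converting $(\star)$ into a contradiction. From $(\star)$, $a_{\nu_j - \mu_j} \to \infty$, so after a subsequence $F^{\nu_j - \mu_j}(o) \to \zeta \in \bdy\OM$. If $\zeta \neq \xi$, the same visibility mechanism applied to the pair $(\nu_j, \nu_j - \mu_j)$ yields $a_{\mu_j} \geqslant a_{\nu_j} + a_{\nu_j - \mu_j} - C$, which combined with $(\star)$ forces $0 \geqslant 2 a_{\nu_j} - 2C$, contradicting $a_{\nu_j} \to \infty$. In the remaining case $\zeta = \xi$, the pair $(\nu_j - \mu_j, \mu_j)$ still has distinct boundary limits $\xi$ and $\eta$, and I iterate: after ordering (and a further subsequence) the same argument yields a lower bound of the form $a_{|\nu_j - 2\mu_j|} \geqslant a_{\nu_j} + 2 a_{\mu_j} - 2C$, and more generally $k$ iterations produce $a_{r_j^{(k)}} \geqslant a_{\nu_j} + k\, a_{\mu_j} - kC$ along Euclidean-descent indices $r_j^{(k)}$ obtained by the slow Euclidean algorithm on $(\nu_j, \mu_j)$. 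At each step, either a new subsequential limit distinct from $\xi$ arises (producing a contradiction as above), or the iteration proceeds; since the slow Euclidean algorithm reaches indices bounded by $\mu_j$, one eventually confronts a terminal $r_j^*$ for which subadditivity yields the upper bound $a_{r_j^*} \leqslant r_j^* \cdot a_1$, incompatible with the accumulated lower bound, which grows like $a_{\nu_j} + a_{\mu_j} - \text{const}$. Careful case analysis (principally, passing to a subsequence on which $\lfloor \nu_j/\mu_j \rfloor$ either stabilises or diverges, and handling these separately) is needed to make this incompatibility precise, and this is where most of the work lies.

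With $\xi = \eta$ established, extension to arbitrary $z \in \OM$ follows quickly from a mirror of $(\star)$. The contractivity bound $\koba_\OM(F^{\nu_j}(z), F^{\nu_j}(o)) \leqslant \koba_\OM(z, o)$ gives $\koba_\OM(F^{\nu_j}(z), o) \geqslant a_{\nu_j} - \koba_\OM(z, o) \to \infty$, so every subsequential limit $\xi(z)$ of $(F^{\nu_j}(z))$ lies in $\bdy\OM$. If $\xi(z) \neq \xi$, the visibility and quasi-geodesic argument of the previous paragraph, applied to the pair of sequences $(F^{\nu_j}(z))$ and $(F^{\nu_j}(o))$ (with distinct boundary limits $\xi(z)$ and $\xi$), yields a lower bound on $\koba_\OM(F^{\nu_j}(z), F^{\nu_j}(o))$ that tends to infinity, contradicting the constant upper bound $\koba_\OM(z, o)$. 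Therefore $\xi(z) = \xi$ and $F^{\nu_j}(z) \to \xi$; the identical argument for $(\mu_j)$ completes the pointwise conclusion. Uniform convergence on compact subsets of $\OM$ is then a consequence of Vitali's theorem applied to the locally uniformly bounded family $\{F^{\nu_j}\}$ (and likewise $\{F^{\mu_j}\}$), since the pointwise limit is the constant map $\xi$.
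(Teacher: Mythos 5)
Your step $(\star)$ — namely $a_{\nu_j - \mu_j} \geqslant a_{\nu_j} + a_{\mu_j} - C$ obtained by combining the visibility bound with the contractivity bound $\koba_\OM(F^{\mu_j}(o), F^{\nu_j}(o)) \leqslant a_{\nu_j - \mu_j}$ — is correct and is the same core inequality the paper exploits. The gap is in what you do with it. Your $\zeta \neq \xi$ case is fine (it closes in one more application of visibility), but the $\zeta = \xi$ case, which you try to handle by running a ``slow Euclidean algorithm,'' is not established, and the mechanism you propose for terminating it does not work. The terminal upper bound you invoke is $a_{r_j^*} \leqslant r_j^* a_1 \leqslant \mu_j a_1$, and you claim this is incompatible with a lower bound ``which grows like $a_{\nu_j} + a_{\mu_j} - \text{const}$.'' But both of these tend to $+\infty$, and there is no reason one should dominate the other: if, say, $a_k$ grows like $\log k$ (a typical growth rate for $\koba_\OM(o, F^k(o))$), then $a_{\nu_j} + a_{\mu_j}$ is $O(\log \nu_j)$ while $\mu_j a_1 \to \infty$ linearly, so the subadditive bound is far too weak to produce a contradiction. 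On top of this, the number of Euclidean-descent steps depends on $j$ and is unbounded, which forces a diagonal over infinitely many subsequence passages — a structural difficulty you flag but do not resolve.

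The paper bypasses all of this with a single preliminary move (following Karlsson, as noted at the start of Section~\ref{S:props_visibility}): before doing anything else, pass to a subsequence of \emph{record indices}, i.e.\ arrange that $\koba_\OM(F^{\nu_{j_\ell}}(o), o) \geqslant \koba_\OM(F^k(o), o)$ for all $k \leqslant \nu_{j_\ell}$. With this in hand, the contractivity estimate gives $\koba_\OM(F^{\nu_{i_j}}(o), F^{m_j}(z)) \leqslant \koba_\OM(F^{\nu_{i_j} - m_j}(o), o) + \koba_\OM(o, z) \leqslant \koba_\OM(F^{\nu_{i_j}}(o), o) + \koba_\OM(o, z)$, because $\nu_{i_j} - m_j \leqslant \nu_{i_j}$. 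Comparing this to the visibility lower bound cancels the $a_{\nu_{i_j}}$ term entirely and leaves $\koba_\OM(z, F^{m_j}(z)) \leqslant 2R + 3 + 3\koba_\OM(o,z)$, a contradiction in one step — no iteration, no Euclidean descent, no diagonal. The record-index extraction is precisely what your argument is missing, and without it I do not see how to close the $\zeta = \xi$ case. Your upgrade from the point $o$ to arbitrary $z$ (via the bound $\koba_\OM(F^{\nu_j}(z), o) \geqslant a_{\nu_j} - \koba_\OM(z,o)$, then visibility against the constant upper bound $\koba_\OM(z,o)$) is correct and is essentially how the paper proceeds as well.
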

\begin{proof}
By \eqref{E:blowing_up} and the fact that $\OM$ is bounded, we can find a 
subsequence $(\nu_{j_{\ell}})_{\ell \geqslant 1}$ such that:
\begin{enumerate}[leftmargin=22pt]
	\item[$(a)$] $\koba_{\OM}(F^{\nu_{j_{\ell}}}(o), o) \geqslant \koba_{\OM}(F^{k}(o), o)$
	for every $k\leqslant \nu_{j_{\ell}}$, $\ell = 1, 2, 3,\dots$;
	\item[$(b)$] $F^{\nu_{j_{\ell}}}(o)\to \xi$ for some point $\xi\in \bdy{\OM}$ as $\ell\to \infty$.
\end{enumerate}
We now establish the following
\vspace{0.5mm}

\noindent{\textbf{Claim:} Let $z\in \OM$ and let $(m_j)_{j \geqslant 1}$ be a sequence of positive
integers with $m_j\to \infty$ such that $\koba_{\OM}( F^{m_j}(z), z)\to \infty$ as $j\to \infty$.
Suppose $(m_{j_{\ell}})_{\ell \geqslant 1}$ is a subsequence such that
\[
  F^{m_{j_{\ell}}}(z)\to \eta \quad\text{as $\ell\to \infty$},
\]
where $\eta$ is some point in $\bdy{\OM}$. Then $\eta = \xi$.}
\vspace{0.5mm}

\noindent{\emph{Proof of claim:} We shall assume $\xi \neq \eta$ and aim for a contradiction. 
For simplicity of notation, let us, for just this paragraph, relabel
$(\nu_{j_{\ell}})_{\ell \geqslant 1}$ as $(\nu_j)_{j \geqslant 1}$\,---\,but with the understanding
that it represents the subsequence introduced at the beginning of the proof. 
Also, relabel $(m_{j_{\ell}})_{\ell \geqslant 1}$ as $(m_j)_{j \geqslant 1}$. Pick a
sequence $i_j \to \infty$ such that $\nu_{i_j} > m_j$, $j = 1, 2, 3,\dots$
Now let $\sigma_j :[0,T_j]\to \OM$ be a
$(1,1)$-almost-geodesic with $\sigma_j(0) = F^{\nu_{i_j}}(o)$ and $\sigma_j(T_j) = F^{m_j}(z)$,
whose existence is guaranteed by Proposition~4.4 of \cite{Bharali_Zimmer}.
Since $\OM$ is a visibility domain with respect to the Kobayashi distance, and as $\xi\neq \eta$ (by
assumption), there exists an $R > 0$ so that 
\[
  \sup\nolimits_{j \geqslant 1} \koba_{\OM}(o, \sigma_j) \leqslant R,
\]
where we write $\koba_{\OM}(o, \sigma_j) \defeq \inf\{\koba_{\OM}(o, \sigma_j(t)) \mid t\in [0, T_j]\}$.
We pick some $t_j \in [0,T_j]$ such that
$\koba_{\OM}(o, \sigma_j(t_j)) \leqslant R$ , $j = 1, 2, 3,\dots$
Then, by Lemma~\ref{L:basic_but_important} we have 
\begin{align*}
  \koba_{\OM}(F^{\nu_{i_j}}(o), F^{m_j}(z))
  & \geqslant \koba_{\OM}(F^{\nu_{i_j}}(o), \sigma_j(t_j)) + \koba_{\OM}(\sigma_j(t_j), F^{m_j}(z)) - 3 \\
  & \geqslant \koba_{\OM}(F^{\nu_{i_j}}(o), o) + \koba_{\OM}(o, F^{m_j}(z))- 3- 2R.
\end{align*}
On the other hand
\[
  \koba_{\OM}(F^{\nu_{i_j}}(o), F^{m_j}(z))
  \leqslant \koba_{\OM}(F^{\nu_{i_j}-m_j}(o), o) + \koba_{\OM}(o, z)
  \leqslant \koba_{\OM}(F^{\nu_{i_j}}(o), o) + \koba_{\OM}(o, z).
\]
The first inequality is due to the triangle inequality and the fact that $F$ is contractive with respect to the Kobayashi
distance and the second is due to the property $(a)$. We conclude that
\begin{align}
   \koba_{\OM}(z, F^{m_j}(z)) &\leqslant \koba_{\OM}(F^{m_j}(z), F^{m_j}(o))
   	+ \koba_{\OM}(F^{m_j}(o), o) + \koba_{\OM}(o,z) \label{E:3-part_triangle} \\
   &\leqslant 2R + 3 + 4\koba_{\OM}(o, z), \notag
\end{align}
which produces a contradiction. 
(The reader will notice that a more efficient bound is possible above, but we opt for the 3-term upper bound in
\eqref{E:3-part_triangle} because we will
need, and refer to, the idea behind this bound later.) Hence the claim.\hfill $\blacktriangleleft$}
\smallskip

Taking $z=o$ in the above claim, and letting $(m_j)_{j \geqslant 1}$
represent any subsequence $(\nu_{j_k})_{k \geqslant 1}$ of $(\nu_j)_{j \geqslant 1}$
(respectively, $(\mu_{j_k})_{k \geqslant 1}$ of $(\mu_j)_{j \geqslant 1}$) for which
$\big( F^{\nu_{j_k}}(o) \big)_{k \geqslant 1}$ is convergent 
(respectively, $\big( F^{\mu_{j_k}}(o) \big)_{k \geqslant 1}$ is convergent), we conclude that
\[
  \lim_{j \rightarrow \infty} F^{\nu_j}(o) = \xi = \lim_{j \rightarrow \infty} F^{\mu_j}(o).
\]
Now consider $z\neq o$. Arguing as in \eqref{E:3-part_triangle} and by the fact that $F$ is contractive,
we have
\[
  \koba_{\OM}( F^{\nu_j}(z), z) \geqslant \koba_{\OM}( F^{\nu_j}(o), o) - 2\koba_{\OM}(o,z)
  \; \; \text{and} \; \;
   \koba_{\OM}( F^{\mu_j}(z), z) \geqslant \koba_{\OM}( F^{\mu_j}(o), o) - 2\koba_{\OM}(o,z).
\]
Therefore, if $(m_j)_{j \geqslant 1}$ represents any subsequence $(\nu_{j_k})_{k \geqslant 1}$ of
$(\nu_j)_{j \geqslant 1}$ (respectively, $(\mu_{j_k})_{k \geqslant 1}$ of $(\mu_j)_{j \geqslant 1}$) for which
$\big( F^{\nu_{j_k}}(z) \big)_{k \geqslant 1}$ is convergent 
(respectively, $\big( F^{\mu_{j_k}}(z) \big)_{k \geqslant 1}$ is convergent), then from the last two
inequalities and  from \eqref{E:blowing_up}, we have
\[
  \koba_{\OM}( F^{m_j}(z), z)\to \infty \quad\text{as $j\to \infty$.}
\]
We can therefore appeal again to our claim, whence, arguing as above, we have
\eqref{E:conclsn_all_z}.
\end{proof}

It turns out that in many applications of visibility, such as the Wolff--Denjoy-type theorems in this 
paper (as well as other applications which we shall address in forthcoming work), knowing that 
$\lim_{r\to 0^+}M_{\OM}(r) = 0$ is of crucial importance. This is guaranteed, by definition,
whenever $\OM$ is a Goldilocks domain. It is not clear whether this is true for visibility domains with
respect to the Kobayashi distance in general. However, for many sub-families of visibility domains, it
can be shown that $\lim_{r\to 0^+}M_{\OM}(r) = 0$. The following theorem is a result of this type.
  
\begin{theorem}\label{T:M_Omega_zero}
	Let $\OM$ be a visibility domain with respect to the Kobayashi distance that is taut.
	Then $\lim_{r\to 0^+}M_{\OM}(r) = 0$.
\end{theorem}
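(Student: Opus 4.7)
The plan is to argue by contradiction. Suppose \(M_{\OM}(r)\not\to 0\) as \(r\to 0^+\). Then there exist \(\epsilon_0>0\), a sequence \((z_n)\subset\OM\) with \(\delta_{\OM}(z_n)\to 0\), and unit vectors \(v_n\in\C^n\) satisfying \(\dkoba_{\OM}(z_n;v_n)\leqslant 1/\epsilon_0\). Since \(\OM\) is bounded, after passing to a subsequence we may assume \(z_n\to\xi\in\bdy\OM\) and \(v_n\to v_0\) with \(\|v_0\|=1\). By the definition of \(\dkoba_{\OM}\), for each \(n\) we can pick a holomorphic disc \(f_n:\unitdisk\to\OM\) with \(f_n(0)=z_n\) and \(f_n'(0)=r_n v_n\), where \(r_n\geqslant \epsilon_0/2\); Cauchy's estimate (combined with the boundedness of \(\OM\)) also bounds \(r_n\) from above, so, after further extraction, \(r_n\to r_0>0\).

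The crux is to extract a non-constant holomorphic disc landing in \(\bdy\OM\) as a limit of \((f_n)\). Tautness of \(\OM\), together with \(f_n(0)=z_n\to\xi\in\bdy\OM\), rules out any subsequence of \((f_n)\) converging locally uniformly to a holomorphic map into \(\OM\); hence a subsequence is compactly divergent. Montel's theorem, applied to the bounded family \((f_n)\), then produces a further subsequence converging locally uniformly to some holomorphic \(f:\unitdisk\to\overline{\OM}\); compact divergence forces \(f(\unitdisk)\subset\bdy\OM\), while \(f'(0)=r_0 v_0\neq 0\) ensures \(f\) is non-constant. Choose \(s_0\in\unitdisk\) with \(f(s_0)=:\eta\neq\xi\); then \(f_n(s_0)\to\eta\) while \(z_n\to\xi\), and these are two distinct points of \(\bdy\OM\).

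At this point visibility fires. For each \(n\), invoke \cite[Proposition~4.4]{Bharali_Zimmer} to produce a \((1,1)\)-almost-geodesic \(\sigma_n:[0,L_n]\to\OM\) from \(z_n\) to \(f_n(s_0)\); the bound \(\koba_{\OM}(z_n,f_n(s_0))\leqslant \koba_{\unitdisk}(0,s_0)=:T\) makes the \(L_n\) uniformly bounded. The visibility hypothesis, applied to disjoint \(\overline{\OM}\)-open neighbourhoods of \(\xi\) and \(\eta\), yields a compact \(K\subset\OM\) and \(t_n\in[0,L_n]\) with \(\sigma_n(t_n)\in K\) for all large \(n\). Lemma~\ref{L:basic_but_important} then gives \(\koba_{\OM}(z_n,\sigma_n(t_n))\leqslant T+3\), so \(\koba_{\OM}(z_n,o)\) remains uniformly bounded for any fixed \(o\in\OM\). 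This contradicts the fact that on a bounded taut domain \(\koba_{\OM}(z_n,o)\to\infty\) as \(z_n\to\bdy\OM\) (i.e.,~\((\OM,\koba_{\OM})\) is complete Kobayashi hyperbolic).

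The hard part is preserving the derivative bound \(\|f_n'(0)\|\geqslant \epsilon_0/2\) through the Montel extraction so that the \(\bdy\OM\)-valued limit disc \(f\) is non-constant: without this non-constancy, the second boundary limit point \(\eta\neq\xi\) needed to trigger visibility would be unavailable. Once the non-trivial disc in \(\bdy\OM\) is secured, visibility together with the Kobayashi completeness of bounded taut domains closes the argument.
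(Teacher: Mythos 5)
Your argument is fine up to the point where you obtain, from visibility and Lemma~\ref{L:basic_but_important}, that $\koba_{\OM}(z_n,o)$ stays bounded; the final step, however, has a genuine gap. You close the argument by asserting that a bounded taut domain is Kobayashi complete, i.e.\ that $\koba_{\OM}(z_n,o)\to\infty$ whenever $z_n\to\bdy\OM$. This implication is not a known theorem, and the paper goes out of its way to \emph{avoid} it: in the discussion preceding Theorem~\ref{T:Wolff_Denjoy_gen} the authors point out that it is not even known whether a bounded weakly pseudoconvex domain in $\C^n$, $n\geqslant 3$ (which \emph{is} taut by Kerzman--Rosay, and is a visibility domain by \cite{Bharali_Zimmer}), is Cauchy complete with respect to $\koba_{\OM}$. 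Kiernan's implications run only in one direction: complete hyperbolic $\Rightarrow$ taut $\Rightarrow$ hyperbolic, and the converse of the first arrow fails in general. So the bound $\koba_{\OM}(z_n,o)\leqslant T+3+R$ that you obtain does not produce a contradiction as stated.

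The source of the difficulty is that you invoke Proposition~4.4 of \cite{Bharali_Zimmer} to produce \emph{some} $(1,1)$-almost-geodesic $\sigma_n$ joining $z_n$ to $f_n(s_0)$, and you have no control over where $\sigma_n$ actually travels. Visibility then tells you it \emph{must} meet a fixed compact, and you are left trying to extract a contradiction out of a conclusion rather than a hypothesis. The paper instead \emph{builds} the almost-geodesics from the discs themselves: it restricts $\varphi_\nu$ to a fixed Poincar\'e geodesic $\gamma:[0,T]\to\unitdisk$ from $0$ to a fixed interior point and shows, using the continuity of $\dkoba_{\OM}$ on taut domains (Result~\ref{R:koba_facts_miscGeom}), a Cauchy-estimate bi-Lipschitz lower bound near $0$, the Euclidean lower bound on $\koba_{\OM}$ from Result~\ref{R:koba_facts_misc}, and Kobayashi-contractivity of $\varphi_\nu$, that $\sigma_\nu\defeq\varphi_\nu\circ\gamma$ is a $(\lambda,0)$-almost-geodesic for a fixed $\lambda$. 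Because $\varphi_\nu\to\varphi$ locally uniformly and $\varphi(\unitdisk)\subset\bdy\OM$, the images $\sigma_\nu([0,T])=\varphi_\nu(\gamma([0,T]))$ eventually miss any given compact subset of $\OM$, while their endpoints converge to the two distinct boundary points $\xi$ and $\eta$. That directly violates the definition of a visibility domain, with no appeal to completeness. To repair your proof you would need either to construct the almost-geodesics explicitly as the paper does (so that you can prove they escape compacta), or to justify the completeness claim --- and the latter is, at best, open.
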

\begin{proof}
Assume that $M_{\OM}(r) \not\to 0$ as $r\to 0$. Since, by definition, $M_{\OM}(r)$ is monotone, this implies
that there exists a constant $\epsilon_0 > 0$ such that $M_{\OM}(r) \searrow \epsilon_0$ as
$r \searrow 0$. Thus, there exist a sequence of positive numbers $r_1 > r_2 > r_3 > \dots$ such that
$r_{\nu} \to 0$ and, for each $\nu\in \posint$, a point
$z_{\nu}\in \OM$ such that $0 < \delta_{\OM}(z_{\nu}) \leqslant r_{\nu}$ and such that:
\begin{itemize}
	\item $\epsilon_0 \leqslant M_{\OM}(r_{\nu}) < \epsilon_0 + 1/{\nu}$; and
	\item $\exists v_{\nu}\in T^{1,0}_{z_{\nu}}\OM$ satisfying $\|v_{\nu}\| = 1$ and
	\begin{equation}\label{E:M_Omega_conseq}
	  \frac{1}{\dkoba_{\OM}(z_{\nu}; v_{\nu})} >\epsilon_0 - \frac{1}{\nu}.
	\end{equation}
\end{itemize}
Owing to the definition of $\dkoba_{\OM}$, \eqref{E:M_Omega_conseq} implies that there exists, for each
$\nu\in \posint$, a holomorphic map $\varphi_{\nu}\in \hol(\unitdisk; \OM)$ satisfying
\[
  \varphi_{\nu}(0) = z_{\nu}, \quad \varphi'_{\nu}(0) \in \{t\bcdot v_{\nu} \mid t > 0\}
  \quad\text{and} \quad \|\varphi'_{\nu}(0)\| \geqslant \epsilon_0 - 1/{\nu}.
\]
Passing to a subsequence and relabelling if necessary, we may assume:
\begin{enumerate}[leftmargin=22pt]
	\item[$(a)$] there exists a point $\xi\in \bdy{\OM}$ such that $z_{\nu}\to \xi$; and
	\item[$(b)$] there exists a map $\varphi\in \hol(\unitdisk; \overline{\OM})$ such that 
	$\varphi_{\nu}\to \varphi$ uniformly on compact subsets.
\end{enumerate}
The conclusion $(b)$ is a consequence of Montel's theorem. However, as $z_{\nu}\to \xi\in \bdy{\OM}$, it
follows from the tautness of $\OM$ that $\varphi(\unitdisk)\subset \bdy{\OM}$.

\smallskip

It follows from the above discussion that $\|\varphi'(0)\| \geqslant \epsilon_0$. However, as 
$\varphi'_{\nu}\to \varphi'$ uniformly on compact sets also, and as\,---\,owing to the fact that $\OM$
is taut\,---\,$\dkoba_{\OM} : \OM\times \C^n\to [0, +\infty)$ is continuous, see
Result~\ref{R:koba_facts_miscGeom}-(\ref{Cnclsn:taut_domains_cont}), there exist a small constant $\delta_1 > 0$ and
a number $N_1\in \posint$ such that
\begin{align}
  \|\varphi'(\zeta)\|, \ \|\varphi'_{\nu}(\zeta)\| &\geqslant \epsilon_0/2
  \quad \forall \zeta\in \overline{D(0, \delta_1)} \text{ and } \forall \nu\geqslant N_1, \label{E:deriv_bound}\\
  \dkoba_{\OM}\big( \varphi_{\nu}(\zeta); \varphi'_{\nu}(\zeta) \big) &\leqslant 2/\epsilon_0
  \quad \forall \zeta\in \overline{D(0, \delta_1)} \text{ and } \forall \nu\geqslant N_1. \label{E:koba_control}
\end{align}
Let $\pi_j$ denote the projection onto the $j$-th coordinate.
By Cauchy's estimates, the magnitude of each of the derivatives of
$\pi_j\circ\varphi_{\nu}$, $j = 1,\dots, n$, $\nu\geqslant N_1$, is bounded above by a quantity
that depends only on $\sup_{x : |x| = \delta_1}|\pi_j\circ\varphi_{\nu}(x)|$, $\delta_1$,
and the order of the derivative in question, and which is
independent of $\zeta$ if $\zeta\in \overline{D(0; \delta_1/2)}$. Thus, by a standard power-series argument
and by \eqref{E:deriv_bound}, we can find a small constant $\delta_2\in (0, \delta_1/2)$ and an integer
$N_2\geqslant N_1$ so that
\begin{equation}\label{E:difference}
  \|\varphi_{\nu}(\zeta_1) - \varphi_{\nu}(\zeta_2)\| \geqslant \frac{\epsilon_0}{4}|\zeta_1 - \zeta_2|
  \quad \forall \zeta_1, \zeta_2\in \overline{D(0, \delta_2)} \text{ and } \forall \nu\geqslant N_2.
\end{equation}

\smallskip

Let us now write
\[
  \bdy{\OM}\ni \eta \defeq \varphi(\delta_2/2) \quad\text{and}
  \quad w_{\nu} \defeq \varphi_{\nu}(\delta_2/2).
\]
It follows from \eqref{E:difference} that $\xi \neq \eta$.
Clearly, $w_{\nu}\to \eta$. Let $\gamma : ([0, T], 0, T)\to (\unitdisk, 0, \delta_2/2)$ denote the
geodesic with respect to the Poincar{\'e} distance on $\unitdisk$ from $0$ to $\delta_2/2$ that
lies in $[0, 1)\subset \unitdisk$. Let us define $\sigma_{\nu} : [0, T]\to \OM$ as
$\sigma_{\nu}(t) \defeq \varphi_{\nu}\circ\gamma(t)$. We claim that each $\sigma_{\nu}$,
$\nu\geqslant N_2$, is a $(\lambda, 0)$-almost geodesic for an appropriate $\lambda\geqslant 1$.
We first note that as $\gamma$ is the restriction of a diffeomorphic
embedding of $\R$ into $\unitdisk$, there exists a constant $r_0 > 0$ such that
\begin{equation}\label{E:diffeo}
  |\gamma(s) - \gamma(t)| \geqslant r_0|s - t|
  \quad \forall s, t\in [0, T].
\end{equation}  
We now estimate, for any $s, t\in [0, T]$ and any $\nu\geqslant N_2$:
\begin{align*}
  \koba_{\OM}\big( \sigma_{\nu}(s), \sigma_{\nu}(t) \big)\,&\geqslant\,c\|\sigma_{\nu}(s) - \sigma_{\nu}(t)\| \\
  &\geqslant\,\frac{c\,\epsilon_0}{4}|\gamma(s) - \gamma(t)|
  && (\text{by \eqref{E:difference} above}) \\
  &\geqslant\,\frac{c\,\epsilon_0\,r_0}{4}|s - t|
  && (\text{by \eqref{E:diffeo} above})
\end{align*}
Here, the constant $c > 0$ in the first inequality is as given by Result~\ref{R:koba_facts_misc}.
On the other hand, by the fact that each $\varphi_{\nu}$ is contractive relative to the Kobayashi distance, we have
for any $s, t\in [0, T]$ (recall that the Poincar{\'e} distance on $\unitdisk$ is $\koba_{\unitdisk}$):
\begin{align*}
  \koba_{\OM}\big( \sigma_{\nu}(s), \sigma_{\nu}(t) \big)\,&
  \leqslant\,\koba_{\unitdisk}\big( \gamma(s), \gamma(t) \big) \\
  &=\,|s - t|.
\end{align*}
Furthermore, by \eqref{E:koba_control}, we have
\[
  \dkoba_{\OM}\big( \sigma_{\nu}(t); \sigma'_{\nu}(t) \big) \leqslant 
  \frac{2}{\epsilon_0}\sup\nolimits_{\tau\in [0, T]}|\gamma'(\tau)|.
\]
From these estimates, and by the fact that as each $\sigma_{\nu}$\,---\,being $\smoo^\infty$-smooth\,---\,is
absolutely continuous, we get that each $\sigma_{\nu}$,
$\nu\geqslant N_2$, is a $(\lambda, 0)$-almost geodesic from $z_{\nu}$ to $w_{\nu}$ with
\[
  \lambda = \max\left( 1,\,\frac{4}{c\,\epsilon_0\,r_0},\,\frac{2}
  {\epsilon_0}\sup\nolimits_{\tau\in [0, T]}|\gamma'(\tau)| \right).
\]
Since $\varphi(\unitdisk)\subset \bdy{\OM}$, it follows that given any compact subset $K\subset \OM$
there exists an integer $N_K \gg 1$ such that $\varphi_{\nu}(\,\overline{D(0, \delta_2/2)}\,)\cap K = \varnothing$
for every $\nu\geqslant N_K$. But this, together with our conclusions about
$\sigma_{\nu}$ (for  $\nu\geqslant N_2$), contradicts the fact that $\OM$ is a visibility domain
with respect to the Kobayashi distance. Hence our assumption about $M_{\OM}$ must be false.
\end{proof}

The last result in this section is one whose conclusion identifies a property that is possessed by domains with
smooth boundaries that are ``sufficiently curved'' in a certain sense. However, Theorem~\ref{T:bdy-vld_lmts_cnstnt}
establishes that any taut visibility domain\,---\,whose boundary is, in general far less well-behaved\,---\,also has the
desirable property alluded to. 

\begin{theorem} \label{T:bdy-vld_lmts_cnstnt}
	Let $X$ be a connected complex manifold and
	let $\OM \subset \C^n$ be a visibility domain with respect to the Kobayashi distance that is
	taut. Suppose $(\varphi_{\nu})_{\nu \geqslant 1}$ is a sequence in $\hol(X; \OM)$
	that converges uniformly on compacts of $X$ to a holomorphic map $\psi: X\to \bdy \OM$. Then $\psi$ is a constant map.
\end{theorem}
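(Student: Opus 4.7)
The plan is to argue by contradiction, reducing to a one-variable setting and then running essentially the same almost-geodesic construction that appears in the proof of Theorem~\ref{T:M_Omega_zero}. Suppose $\psi$ is not constant. Since $X$ is a connected complex manifold and $\psi:X\to\C^n$ is holomorphic, the identity principle produces some $x_0 \in X$ with $d\psi(x_0)\neq 0$. Pick $v\in T_{x_0}X$ with $d\psi(x_0)v\neq 0$ and a holomorphic disc $f:\unitdisk\to X$ with $f(0)=x_0$ and $f'(0)=v$; then $(\psi\circ f)'(0)\neq 0$. Replacing $\varphi_\nu$ by $\varphi_\nu\circ f$ and $\psi$ by $\psi\circ f$, we may henceforth assume $X=\unitdisk$ and $\epsilon_0 := \|\psi'(0)\| > 0$.

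Now the argument parallels the second half of the proof of Theorem~\ref{T:M_Omega_zero}. Cauchy's estimates upgrade the uniform-on-compacts convergence $\varphi_\nu\to\psi$ to $\varphi'_\nu\to\psi'$ on compacts, so there exist $\delta_1>0$ and $N_1\in\posint$ with $\|\varphi'_\nu(\zeta)\|\geqslant \epsilon_0/2$ on $\overline{D(0,\delta_1)}$ for every $\nu\geqslant N_1$. The same power-series argument that produces \eqref{E:difference} yields $\delta_2\in(0,\delta_1/2)$ and $N_2\geqslant N_1$ such that
\[
  \|\varphi_\nu(\zeta_1)-\varphi_\nu(\zeta_2)\|\;\geqslant\;\frac{\epsilon_0}{4}\,|\zeta_1-\zeta_2|
  \qquad \forall\,\zeta_1,\zeta_2\in\overline{D(0,\delta_2)},\ \nu\geqslant N_2.
\]
Passing to the limit, the same inequality holds for $\psi$, so $\psi$ is injective on $\overline{D(0,\delta_2)}$. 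Fix a real $\zeta^*\in(0,\delta_2)$ and set $\xi:=\psi(0)$ and $\eta:=\psi(\zeta^*)$; these are distinct points of $\bdy\OM$.

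Let $\gamma:[0,T]\to\unitdisk$ be the Poincar\'e-arclength parametrisation of the geodesic from $0$ to $\zeta^*$, so that $\gamma([0,T])\subset[0,\zeta^*]\subset\overline{D(0,\delta_2)}$ and $|\gamma'|$ is bounded below by some $r_0>0$ on $[0,T]$. Define $\sigma_\nu:[0,T]\to\OM$ by $\sigma_\nu(t):=\varphi_\nu(\gamma(t))$, $\nu \geqslant N_2$. The contractivity of $\varphi_\nu$ together with $\gamma$ being a Poincar\'e geodesic gives the upper bound $\koba_\OM(\sigma_\nu(s),\sigma_\nu(t))\leqslant|s-t|$ and the infinitesimal bound $\dkoba_\OM(\sigma_\nu(t);\sigma'_\nu(t))\leqslant 1$ almost everywhere. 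Combining Result~\ref{R:koba_facts_misc}, the bi-Lipschitz estimate above, and the lower bound on $|\gamma'|$ yields the lower bound
\[
  \koba_\OM(\sigma_\nu(s),\sigma_\nu(t))\;\geqslant\;c\,\|\sigma_\nu(s)-\sigma_\nu(t)\|
  \;\geqslant\;\frac{c\,\epsilon_0\,r_0}{4}\,|s-t|,
\]
with $c>0$ the constant supplied by Result~\ref{R:koba_facts_misc}. Hence each $\sigma_\nu$ is a $(\lambda,0)$-almost-geodesic with $\lambda=\max(1,\,4/(c\,\epsilon_0\,r_0))$, joining $\sigma_\nu(0)=\varphi_\nu(0)\to\xi$ to $\sigma_\nu(T)=\varphi_\nu(\zeta^*)\to\eta$.

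Visibility now supplies disjoint $\overline{\OM}$-open neighbourhoods $V\ni\xi,\ W\ni\eta$ and a compact $K\subset\OM$ such that, once the endpoints of $\sigma_\nu$ lie in $V$ and $W$ (which holds for all large $\nu$), the image $\sigma_\nu([0,T])$ must meet $K$. But $\sigma_\nu\to\psi\circ\gamma$ uniformly on $[0,T]$, and $\psi\circ\gamma([0,T])\subset\bdy\OM$ is at positive Euclidean distance from the compact $K\subset\OM$; so $\sigma_\nu([0,T])\cap K=\varnothing$ for all sufficiently large $\nu$, a contradiction. The main obstacle is the lower almost-geodesic inequality: the upper bound and the infinitesimal bound for $\sigma_\nu$ are immediate from contractivity, but the lower bound is precisely what forces us to first extract the bi-Lipschitz estimate for $\varphi_\nu$ via Cauchy's estimates (and in turn forces the preliminary reduction to a disc on which $\psi'\neq 0$).
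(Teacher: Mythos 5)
Your proof is correct, and it takes a genuinely different route from the paper's. The paper's argument is a short corollary of Theorem~\ref{T:M_Omega_zero}: fixing $x\in X$ and $v_0\in T^{1,0}_xX$, the distance-decreasing property of $\varphi_\nu$ and the definition of $M_\OM$ give
\[
  \|\varphi'_\nu(x)v_0\|\ \leqslant\ \dkoba_X(x;v_0)\,M_\OM\big(\delta_\OM(\varphi_\nu(x))\big),
\]
and since $\delta_\OM(\varphi_\nu(x))\to 0$ while $M_\OM(r)\to 0$ (which is where tautness enters, via Theorem~\ref{T:M_Omega_zero}), one concludes $\psi'(x)\equiv 0$. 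You instead argue by contradiction and essentially re-run, on the given sequence $(\varphi_\nu)$, the almost-geodesic construction that forms the harder half of the proof of Theorem~\ref{T:M_Omega_zero}: reduce to a disc on which $\psi'\neq 0$, extract a uniform bi-Lipschitz estimate via Cauchy bounds, build $(\lambda,0)$-almost-geodesics $\varphi_\nu\circ\gamma$ whose endpoints approach two distinct boundary points, and contradict visibility because $\varphi_\nu\circ\gamma\to\psi\circ\gamma$ lands in $\bdy\OM$ and so eventually avoids any compact in $\OM$.

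Your route is longer, but it buys something concrete: you never invoke tautness. The paper needs tautness twice in the chain of reasoning underlying Theorem~\ref{T:M_Omega_zero} — once to conclude that the limit map is boundary-valued (which in your setting is simply the hypothesis $\psi:X\to\bdy\OM$), and once for the continuity of $\dkoba_\OM$ used to derive the infinitesimal Kobayashi bound. You obtain that bound directly from the metric-decreasing property applied to $\varphi_\nu$ and the arclength parametrisation of the Poincar\'e geodesic $\gamma$, which requires no continuity of $\dkoba_\OM$ at all; the lower almost-geodesic inequality comes from Result~\ref{R:koba_facts_misc} and the bi-Lipschitz estimate, both of which need only boundedness of $\OM$. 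Consequently your argument proves a strictly stronger statement: the hypothesis that $\OM$ be taut can be dropped from Theorem~\ref{T:bdy-vld_lmts_cnstnt} entirely, visibility alone sufficing. The paper's proof is tied to tautness only through its reliance on Theorem~\ref{T:M_Omega_zero}.

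One small point worth making explicit in a writeup: the lower bound $|\gamma'|\geqslant r_0>0$ holds because $\gamma(t)=\tanh t$ on $[0,T]$, so $\gamma'(t)=1-\tanh^2 t$ is continuous and strictly positive on the compact interval; and the Euclidean lower bound $|\gamma(s)-\gamma(t)|\geqslant r_0|s-t|$ uses that $\gamma$ is real-valued and increasing (not merely that $|\gamma'|\geqslant r_0$). You have this implicitly; it is the same observation the paper makes when it says $\gamma$ is ``the restriction of a diffeomorphic embedding.''
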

\begin{proof}
Fix $x \in X$. For any $f\in \hol(X; \OM)$, let $f'$ denote the holomorphic total derivative of $f$.
Since $(\varphi_{\nu})_{\nu \geqslant 1}$ converges uniformly on compacts to $\psi$, it follows
that $\varphi'_{\nu}(x) \to \psi'(x)$ (the easiest way to understand this is to equip $X$ with some hermitian metric; the choice
of metric is irrelevant to the proof). Fix a vector $v_0 \in (T^{1,0}_{x} X)\setminus \{0\}$. 
We claim that, given a $\nu\in \posint$, $\|\varphi'_{\nu}(x)v_0\|
\leqslant \dkoba_X(x, v_0)M_{\OM}\big( \delta_{\OM}( \varphi_{\nu}(x) ) \big)$. 
There is nothing to prove if $v_0\in {\rm Ker}(\varphi'_{\nu}(x))$. Thus, assume that 
$v_0 \notin {\rm Ker}(\varphi'_{\nu}(x))$. We estimate:
\[
  \frac{ \|\varphi'_{\nu}(x)v_0\| }{ \dkoba_{\OM}\big( \varphi_{\nu}(x); \varphi'_{\nu}(x)v_0 \big) } =
\frac{1}{ \dkoba_{\OM}\left( \varphi_{\nu}(x); \tfrac{\varphi'_{\nu}(x)v_0}{\|\varphi'_{\nu}(x)v_0\|} \right) } \leqslant
M_{\OM}\big( \delta_{\OM}( \varphi_{\nu}(x) ) \big).
\]
The inequality on the right side is due to the definition of $M_{\OM}$. Therefore
\[
  \|\varphi'_{\nu}(x)v_0\| \leqslant  \dkoba_{\OM}\big( \varphi_{\nu}(x); \varphi'_{\nu}(x)v_0 \big)
  M_{\OM}\big( \delta_{\OM}( \varphi_{\nu}(x) ) \big) 
  \leqslant \dkoba_X(x; v_0) M_{\OM}\big( \delta_{\OM}( \varphi_{\nu}(x) ) \big),
\]
which is the desired claim.
The second inequality is due the metric-decreasing property of holomorphic maps. 
By hypothesis, $\delta_{\OM}(\varphi_{\nu}(x)) \to 0$ as $\nu\to \infty$. Since $\OM$ is taut, it follows from
Theorem~\ref{T:M_Omega_zero} that $M_{\OM}\big( \delta_{\OM}( \varphi_{\nu}(x) )\big) \to 0$
as $\nu \to \infty$. Therefore, from the last inequality, we see that $\varphi'_{\nu}(x)v_0 \to 0$ as $\nu \to \infty$.
This in turn implies that $\psi'(x)v_0=0$. Now $v _0\in (T^{1,0}_{x} X)\setminus \{0\}$ was arbitrary, whence we get
$\psi'(x) \equiv 0$. As the above $x\in X$ was arbitrary, and as $X$ is connected, it follows that $\psi$ is a constant. 
\end{proof}
\smallskip

\section{The proof of Theorem~\ref{T:gen_visibility-lemma}}\label{S:gen_visibility-lemma}
This section is devoted to proving Theorem~\ref{T:gen_visibility-lemma}. To do so, we first need a technical lemma.

\begin{lemma} \label{L:cntrl_MOmega_lmm}
	Let $f$ be as in Theorem~\ref{T:gen_visibility-lemma}. Fix constants $\lambda\geqslant 1$ and
	$\kappa\geqslant 0$. Then, given $\epsilon > 0$, there exist constants $-\infty < a' < b' < +\infty$
	such that
	\begin{align*}
	  \int_{-\infty}^{a'} M_{\OM}\!\Big( \frac{1}{f^{-1}\big( (1/2 \lambda)|t|-(\kappa/2) \big)} \Big) dt &< \epsilon, \\
	  \int_{b'}^{+\infty} M_{\OM}\!\Big( \frac{1}{f^{-1}\big( (1/2 \lambda)t-(\kappa/2) \big)} \Big) dt &< \epsilon.
	\end{align*}
\end{lemma}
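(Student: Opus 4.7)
The plan is to reduce both claimed tail estimates, via explicit changes of variables, to tails of the integral appearing in hypothesis \eqref{E:intgr_cndn_M}. Since the integrand of the left-tail integral depends on $t$ only through $|t|$, the substitution $t \mapsto -t$ makes the two cases symmetric, and I would concentrate on the tail at $+\infty$.

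For that tail, first choose $b'$ large enough that $u_0 \defeq (1/(2\lambda))b' - \kappa/2$ lies in the range of $f$, which is possible because $f$ is strictly increasing with $f(t) \to +\infty$ as $t \to +\infty$. The linear substitution $u = (1/(2\lambda))t - \kappa/2$, $dt = 2\lambda\,du$, converts the right-tail integral into
\[
  2\lambda \int_{u_0}^{+\infty} M_{\OM}\!\Big( \frac{1}{f^{-1}(u)} \Big)\,du.
\]
Next, since $f \in \smoo^1$ is strictly monotone, $f^{-1}$ is a $\smoo^1$ diffeomorphism onto its image, so the substitution $r = 1/f^{-1}(u)$ (equivalently, $u = f(1/r)$) is legitimate, with $du = -(1/r^2) f'(1/r)\,dr$ and $r \to 0^+$ as $u \to +\infty$. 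Writing $r_0' \defeq 1/f^{-1}(u_0)$, this transforms the integral into
\[
  2\lambda \int_{0}^{r_0'} \frac{M_{\OM}(r)}{r^2}\,f'\!\!\left( \frac{1}{r} \right) dr.
\]
By hypothesis \eqref{E:intgr_cndn_M}, the integrand is non-negative and Lebesgue integrable on $(0,r_0)$, so by absolute continuity of the integral (equivalently, by the monotone convergence theorem applied to the truncations $\chi_{(0,r_0')}$), this quantity tends to $0$ as $r_0' \searrow 0$. Since $b' \to +\infty$ forces $u_0 \to +\infty$ and hence $r_0' \to 0^+$, I can select $b'$ so that the right-tail integral is less than $\epsilon$.

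The left-tail estimate follows by the same argument after the substitution $s = -t$ (so that $|t| = s$ for $t$ negative), reducing it verbatim to the right-tail case with the threshold $-a'$ in place of $b'$; taking $a'$ sufficiently negative makes the left-tail integral less than $\epsilon$ as well. I do not foresee any real obstacle in this argument — the only points requiring any care are (i) ensuring that the argument of $f^{-1}$ lies in the range of $f$ (handled by choosing $|a'|$ and $b'$ large enough) and (ii) the change of variables, which is valid because the $\smoo^1$-strict-monotonicity of $f$ lets $r = 1/f^{-1}(u)$ serve as a smooth diffeomorphism between the relevant intervals.
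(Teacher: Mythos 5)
Your argument is correct and follows the same route as the paper's proof, which simply invokes the substitution $r = 1/f^{-1}\big( (1/2\lambda)|t| - (\kappa/2) \big)$ (and its analogue without the absolute value) to reduce the tail integrals to the integrability hypothesis \eqref{E:intgr_cndn_M}. You have merely filled in the routine details that the paper explicitly omits.
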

\begin{proof}
The result is a consequence of the change-of-variable formula, using
\[
  r \defeq \frac{1}{f^{-1}\big( (1/2 \lambda)|t| - (\kappa/2) \big)} 
\]
for the first integral, and
\[
  r \defeq \frac{1}{f^{-1}\big( (1/2 \lambda)t - (\kappa/2) \big)} 
\]
for the second. We omit the routine computations that these changes of variable necessitate. The inequalities
follow from the integrability condition \eqref{E:intgr_cndn_M}.
\end{proof}

We are now in a position to give the:

\begin{proof}[Proof of Theorem~\ref{T:gen_visibility-lemma}]
We proceed by contradiction. Assume thus that there exist constants $\lambda \geqslant 1$ and $\kappa \geqslant 0$,\
a pair of distinct points $\xi,\eta \in \bdy \OM$, neighbourhoods $V$ and $W$ of $\xi$ and $\eta$, respectively, in
$\overline{\OM}$ with $\overline{V} \cap \overline{W} = \varnothing$, 
and a sequence $( \sigma_{\nu} )_{\nu \geqslant 1}$ of $(\lambda, \kappa)$-almost-geodesics,
$\sigma_{\nu} : [a_{\nu},b_{\nu}] \to \OM$, such that $\sigma_{\nu}(a_{\nu}) \in V$ and
$\sigma_{\nu}(b_{\nu}) \in W$ for
all $\nu$ and such that
\[
  \max_{t \in [a_{\nu},b_{\nu}]}\dtb{\OM}(\sigma_{\nu}(t)) \to 0 \; \; \text{as} \; \; \nu \to \infty.
\]
By re-parametrizing, we can assume that, for all $\nu$, $a_{\nu} \leqslant 0 \leqslant b_{\nu}$ and that
\[
  \dtb{\OM}(\sigma_{\nu}(0)) = \max_{t \in [a_{\nu},b_{\nu}]} \dtb{\OM}(\sigma_{\nu}(t)).
\]
By Result~\ref{R:Lipschitz}, there exists a $C < \infty$ such that every $\OM$-valued
$(\lambda,\kappa)$-almost-geodesic is $C$-Lipschitz with respect to the Euclidean distance. Therefore, by using the
Arzela--Ascoli theorem and passing to an appropriate subsequence, we may assume:
\begin{itemize}
	\item $a_{\nu} \to a \in [-\infty,0]$ and $b_{\nu} \to b \in [0,+\infty]$; \smallskip
	\item $(\sigma_{\nu})_{\nu \geqslant 1}$ converges locally uniformly on $(a,b)$ to a continuous map $\sigma: (a,b) \to 
	\overline{\OM}$; \smallskip
	\item $( \sigma_{\nu}(a_{\nu}) )_{\nu \geqslant 1}$ converges to $\xi' \in \overline{V}$; and \smallskip
	\item $( \sigma_{\nu}(b_{\nu}) )_{\nu \geqslant 1}$ converges to $\eta' \in \overline{W}$.
\end{itemize}
Clearly, $\xi' \neq \eta'$ because $\overline{V} \cap \overline{W} = \varnothing$. We can conclude from the fact that
\[
\|\sigma_{\nu}(a_{\nu})-\sigma_{\nu}(b_{\nu})\| \leqslant C(b_{\nu}-a_{\nu}) \quad \forall \nu \in \posint
\]
that $a<b$.
\smallskip

\noindent{\textbf{Claim:} If $\theta: [s_1,s_2] \to \OM$ is a $(\lambda,\kappa)$-almost-geodesic,
then for almost every $t \in [s_1,s_2]$, $\|\theta'(t)\| \leqslant \lambda
M_{\OM}( \dtb{\OM}(\theta(t)) )$.}
\vspace{0.5mm}

\noindent{\emph{Proof of claim:} By the definition of a $(\lambda,\kappa)$-almost-geodesic we have
$\kappa_{\OM}(\theta(t),\theta'(t)) \leqslant \lambda$ for almost every $t \in [s_1,s_2]$.
If $\theta'(t)=0$, then the claim is 
trivially true. If $\theta'(t) \neq 0$, we have
\[
\kappa_{\OM}\left( \theta(t),\frac{\theta'(t)}{\|\theta'(t)\|} \right) \leqslant \frac{\lambda}{\|\theta'(t)\|}.
\]
So
\[
  \|\theta'(t)\| \leqslant \lambda \cdot \frac{1}{\kappa_{\OM}\!\left( \theta(t),\tfrac{\theta'(t)}{\|\theta'(t)\|} \right)}
			 \leqslant \lambda M_{\OM}\big( \dtb{\OM}(\theta(t)) \big).  
  \tag*{$\blacktriangleleft$}
\]}
\smallskip

We first assert that $\sigma : (a,b) \to \overline{\OM}$ is constant. To prove this, we use the fact that $M_{\OM}(t)
\searrow 0$ as $t \searrow 0$. With this, the proof proceeds exactly along the lines of the proof of Claim~1 in
\cite[Section~5]{Bharali_Zimmer}. Hence, we omit the proof.
\smallskip

We shall now show that $\sigma$ is \emph{not} constant. Our argument involves the study of
two cases.
\smallskip

\noindent{\emph{Case~1.} Both $a$ and $b$ are finite.}
\vspace{0.5mm}

\noindent{In this case, we first define the $C$-Lipschitz maps
$\widetilde{\sigma}_{\nu} : [a, b] \to \OM$ obtained by restricting each $\sigma_{\nu}$
to $[a_{\nu}, b_{\nu}]\cap [a, b]$ and then extending the restricted map
continuously to $[a, b]$ by defining the extension to be a constant on the intervals $[a, a_\nu]$ and
$[b_\nu, b]$ whenever $a < a_{\nu}$ or $b_{\nu} < b$. We can then infer by a standard argument that
$\sigma$ extends to a continous map $\widetilde{\sigma}: [a, b] \to \overline{\OM}$. We have
$\widetilde{\sigma}(a) = \xi' \neq \eta' = \widetilde{\sigma}(b)$. By continuity of $\widetilde{\sigma}$,
it follows that $\left.\widetilde{\sigma}\right|_{(a, b)}$ is non-constant.}
\smallskip
%\pagebreak

\noindent{\emph{Case~2.} Either $a = -\infty$ or $b = +\infty$.}
\vspace{0.5mm}

\noindent{We make a couple of preliminary observations. For every $\nu \in \posint$  and every
$t \in [a_{\nu},b_{\nu}]$,
\begin{align*}
  \frac{1}{\lambda}|t|-\kappa \leqslant \koba_{\OM}(\sigma_{\nu}(0),\sigma_{\nu}(t))
  &\leqslant \koba_{\OM}(\sigma_{\nu}(0),z_0) + \koba_{\OM}(z_0,\sigma_{\nu}(t)) \\
  &\leqslant 2 f\left( \frac{1}{\dtb{\OM}(\sigma_{\nu}(t))} \right),
\end{align*}
because $\dtb{\OM}(\sigma_{\nu}(0)) \geqslant \dtb{\OM}(\sigma_{\nu}(t))$.}
\smallskip

Let us first consider the case when $b = +\infty$. By the properties of the
sequence $( \sigma_{\nu} )_{\nu \geqslant 1}$, it follows that there exists $N \in \posint$
and a constant $B \gg 1$ such that
\[
  \frac{1}{2 \lambda}|t|-\frac{\kappa}{2} \in \range(f) \quad \forall t \in (B, b_{\nu} ] \text{ and }
  \forall \nu \geqslant N.
\]
Thus, by the fact that $f$ is strictly increasing, we get:
\begin{equation}\label{E:ineq_involving_f-inv_1}
  f^{-1}\left( \frac{1}{2 \lambda}|t|-\frac{\kappa}{2} \right) \leqslant \frac{1}{\dtb{\OM}(\sigma_{\nu}(t))}
  \quad \forall t \in (B, b_{\nu} ]  \text{ and }  \forall \nu \geqslant N,
\end{equation}
in case $b = +\infty$. If $a = -\infty$, we can argue in exactly the same way to find a constant $A \gg 1$
such that
\begin{equation}\label{E:ineq_involving_f-inv_2}
  f^{-1}\left( \frac{1}{2 \lambda}|t|-\frac{\kappa}{2} \right) \leqslant \frac{1}{\dtb{\OM}(\sigma_{\nu}(t))}
  \quad \forall t \in [a_{\nu}, -A)  \text{ and } \forall \nu \geqslant N
\end{equation}
(where $N$ is exactly as above).
\smallskip

At this juncture, we shall assume that $a = -\infty$ and $b = +\infty$. This is the principal sub-case; we shall
merely indicate the changes that would be needed in the argument that follows in case either \textbf{one} of
$a$ or $b$ is finite. With this assumption, we have, by monotonicity of $M_{\OM}$ and 
from \eqref{E:ineq_involving_f-inv_1} and \eqref{E:ineq_involving_f-inv_2}:
\[
  M_{\OM}\big( \dtb{\OM}(\sigma_{\nu}(t)) \big)
  \leqslant M_{\OM}\Big( \frac{1}{f^{-1}\big( (1/2 \lambda)|t|- (\kappa/2) \big)} \Big)
\]
for evey $t \in [a_{\nu}, -A)\cup (B, b_{\nu} ]$ and for every $\nu \geqslant N$.
So, finally, by our claim above, we conclude that
\begin{align} 
  \|\sigma'_{\nu}(t)\| &\leqslant \lambda M_{\OM}\big( \dtb{\OM}(\sigma_{\nu}(t)) \big) \notag \\
  				    &\leqslant \lambda M_{\OM}\Big( \frac{1}{f^{-1}\big( (1/2 \lambda)|t|- (\kappa/2) \big)} \Big)
  \quad\text{for a.e. $t \in [a_{\nu}, -A)\cup (B, b_{\nu} ]$ and
  $\forall\nu \geqslant N$}. \label{E:upper_bound_sigma_nu_prime}
\end{align}
\smallskip

Using Lemma~\ref{L:cntrl_MOmega_lmm}, we choose $a'\in (-\infty, -A)$ and $b'\in (B, +\infty)$ such that
\[
  \lambda\!\int_{-\infty}^{a'}\!M_{\OM}\Big( \frac{1}{f^{-1}\big( (1/2 \lambda)|t|-(\kappa/2) \big)} \Big) dt +
  \lambda\!\int_{b'}^{+\infty}\!M_{\OM}\Big( \frac{1}{f^{-1}\big( (1/2 \lambda)t-(\kappa/2) \big)} \Big) dt <
\|\xi'-\eta'\|.
\]
Then
\begin{align*}
  \|\sigma(b')-\sigma(a')\|
  &= \lim_{\nu \to \infty} \|\sigma_{\nu}(b')-\sigma_{\nu}(a')\| \\ 
  &\geqslant \limsup_{\nu \to \infty} \big( \|\sigma_{\nu}(b_{\nu})-\sigma_{\nu}(a_{\nu})\| - 
  				  \|\sigma_{\nu}(a_{\nu})-\sigma_{\nu}(a')\| - \|\sigma_{\nu}(b_{\nu})-\sigma_{\nu}(b')\| \big) \\
%%  &\geqslant \lim_{\nu \to \infty} \|\sigma_{\nu}(b_{\nu})-\sigma_{\nu}(a_{\nu})\| -
%%                        \limsup_{\nu \to \infty} \|\sigma_{\nu}(a_{\nu})-\sigma_{\nu}(a')\| -
%%                       \limsup_{\nu \to \infty} \|\sigma_{\nu}(b_{\nu})-\sigma_{\nu}(b')\| \\
%%                         &=\|\xi'-\eta'\| - \limsup_{\nu \to \infty} \left\| \int_{a_{\nu}}^{a'} \sigma'_{\nu}(t) dt\right\| 
%%                         - \limsup_{\nu \to \infty} \left\| \int_{b'}^{b_{\nu}} \sigma'_{\nu}(t) dt \right\|\\
  &\geqslant  \|\xi'-\eta'\| - \limsup_{\nu \to \infty} \int_{a_{\nu}}^{a'} \|\sigma'_{\nu}(t)\| dt
                  - \limsup_{\nu \to \infty} \int_{b'}^{b_{\nu}} \|\sigma'_{\nu}(t)\| dt \\
  &\geqslant \|\xi'-\eta'\| - \limsup_{\nu \to \infty} \ \lambda\!\int_{a_{\nu}}^{a'} M_{\OM}
  		    \Big(\frac{1}{f^{-1}\big( (1/2 \lambda)|t|-(\kappa/2) \big)} \Big) dt \\
  &- \limsup_{\nu \to \infty} \ \lambda\!\int_{b'}^{b_{\nu}} M_{\OM}
  					   \Big( \frac{1}{f^{-1}\big( (1/2 \lambda)t - (\kappa/2) \big)} \Big) dt
  \tag*{\text{(using \eqref{E:upper_bound_sigma_nu_prime})}} \\
  &=\|\xi'-\eta'\| - \lambda\!\int_{-\infty}^{a'}\!M_{\OM}\Big( \frac{1}{f^{-1}
       \big( (1/2\lambda)|t|-(\kappa/2) \big)} \Big) dt \\
  &- \lambda\!\int_{b'}^{+\infty}\!M_{\OM} 
                                 \Big( \frac{1}{f^{-1}\big( (1/2 \lambda)t - (\kappa/2) \big)} \Big) dt\,>\,0.                                                    
\end{align*}
This shows that $\sigma$ is not constant.
\smallskip

If $a$ is finite, then by an analogue of the argument described in Case~1 (by constructing
auxiliary maps that are $C$-Lipschitz on $[a, 0]$), we infer that $\sigma$ extends to
a continuous map $\widetilde{\sigma} : [a, +\infty)$. We now estimate
$\|\sigma(b') - \widetilde{\sigma}(a)\|$\,---\,with $b'\in (B, +\infty)$ chosen appropriately so that 
we can argue as in the previous paragraph\,---\,to get $\|\sigma(b') - \widetilde{\sigma}(a)\| > 0$.
An analogous description can be given for the argument in case $b$ is finite. This completes the argument
for Case~2, with the conclusion that $\sigma$ is not constant.
\smallskip

This last assertion produces a contradiction. Thus, the assumption made at the beginning must be
false, which completes the proof.
\end{proof}
%\smallskip

\section{A family of planar comparison domains}\label{S:comparison_dom}
In this section, we take the first step in showing that caltrops have the properties
stated in Theorem~\ref{T:gen_visibility-lemma}. The essential idea is as follows: we first explicitly calculate the
Kobayashi distance on a model planar domain $D$. Then, given a caltrop $\OM\subset \C^n$, $n\geq 2$, we shall
affinely embed copies of $D$ into $\OM$ in such a way that every point of $\OM$ that
is sufficiently close to $\bdy\OM$ is contained in one of these embedded domains. Then, the
distance-decreasing property of holomorphic mappings for the Kobayashi distance could be used
to estimate the Kobayashi distance on $\OM$.

\smallskip

Given the geometry of the boundary of a caltrop, the model comparison domain $D$ that we need
will be bounded, symmetric about the real axis, have $0$ as a boundary point and the tip of an 
outward-pointing cusp. In fact, it will be useful to construct a family of
model planar domains having the latter properties. To this end, given $a, h>0$, define the following domains 
in $\C$:
\begin{equation}
S_{a,h} \defeq \{ z=x+iy \in \C \mid x > a, \, -h < y < h \}.
\end{equation}
Let us denote the domains that we are interested in by $\cspdmaltalt$, where $\cspdmaltalt$ is the image of $S_{a,h}$
under the following biholomorphisms, composed in the order given below:
\begin{align*}
\inv(z) &\defeq 1/z \quad \forall z \in \C \setminus \{0\}, \\
\phi_{\alpha}(z) &\defeq z^{\alpha} \quad \forall z \in  \inv(S_{a, h}).
\end{align*}
Here $\alpha$ is a real number greater than 1, and $a, h > 0$ are such that $\phi_{\alpha}$ 
is in fact a biholomorphism. That $a, h > 0$ can be so chosen follows from an elementary calculation.
Specifically, we compute:
\[
\inv(S_{a,h}) = \big( \C \setminus \overline{D(-i/2h,1/2h)}\,\big) \cap 
			\big( \C \setminus \overline{D(i/2h,1/2h)}\,\big) \cap D(1/2a,1/2a).
\] 
Let us denote $\inv(S_{a,h})$ by $T_{a,h}$.

\smallskip

We make a simple observation which will be useful in the proposition below. The region $T_{a,h}$ contains $0$ in its
boundary and has a quadratic cusp at $0$. This means that there exist constants $c_1, c_2 >0$
such that, for every $z \in \bdy T_{a,h}$,
\begin{align}
c_1\rprt(z)^2 &\leqslant \iprt(z) \leqslant c_2 \rprt(z)^2, \text{ or } \notag \\
-c_2\rprt(z)^2 &\leqslant \iprt(z) \leqslant -c_1 \rprt(z)^2, \label{E:constants_qcusp}
\end{align}
depending on whether $\iprt(z)\geqslant 0$ or $\iprt(z)\leqslant 0$, provided $\rprt(z)$ is sufficiently small.
 In fact, by straightforward calculations we see that for some $\delta > 0$ sufficiently small,
$\bdy T_{a,h} \cap \{z \in \C \mid 0 \leqslant \rprt(z) \leqslant \delta\} = \grph(f) \cup \grph(-f)$, where
\begin{equation} \label{E:fn_dfng_bdry_of_T(a,h)_near_0}
f(x) = hx^2 + O(x^4) \; \text{ as } x \to 0^{+},
\end{equation}  
with the understanding that $z= x + iy$. In this section, $\grph(\cdot)$ will denote the graph of a specified function.
  
\smallskip

The following proposition describes the features of the (family of) domains $\cspdmaltalt$ that will be relevant to
estimating Kobayashi distances\,---\,in the manner hinted at above\,---\,on caltrops.

\begin{proposition}\label{P:cusp_domain_props}
	Fix $\alpha > 1$ and let $T_{a, h}$ and $\cspdmaltalt$ be as described above\,---\,with $a, h > 0$ appropriately
	chosen. Set $p \defeq (1+\alpha)/\alpha$. Then:
	\begin{enumerate}
 		\item \label{Cnclsn:cusp_domain_bdry} There exist constants $\epsilon, C_1, C_2 > 0$ such that, for every $z
 		\in \bdy\cspdmaltalt \cap \{z \in \C \mid 0 \leqslant \rprt(z) \leqslant \epsilon\}$, we have
		\begin{align*}
		  C_1\rprt(z)^p &\leqslant \iprt(z) \leqslant C_2 \rprt(z)^p, \text{ or } \\
		  -C_2 \rprt(z)^p &\leqslant \iprt(z) \leqslant -C_1 \rprt(z)^p,
		\end{align*}
		depending on whether $\iprt(z)\geqslant 0$ or $\iprt(z)\leqslant 0$.
		\item \label{Cnclsn:cusp_domain_prmtrs} If we fix a constant $M \geqslant 2$, then we can
		choose an $\epsilon > 0$ sufficiently small so that the inequalities in
		\eqref{Cnclsn:cusp_domain_bdry} hold true with $C_2 \defeq Mh\alpha$.
		Furthermore, for a given $\alpha>1$ and $h>0$, this choice of $\epsilon$ decreases
		as $a \nearrow +\infty$.
 		\item \label{Cnclsn:cusp_domain_koba_est} Fix some point $x_0\in \cspdmaltalt\cap \R$. There exists
		a constant $C > 0$, which depends on $x_0$, such that
 		\[
 		  \koba_{\cspdmaltalt}(x_0,x) \leqslant C + \frac{\pi}{4h}x^{-1/\alpha} \quad \forall x\in (0, x_0).
		\]
	\end{enumerate}
\end{proposition}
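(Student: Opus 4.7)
The proof divides naturally into two parts: a boundary asymptotic analysis for conclusions \eqref{Cnclsn:cusp_domain_bdry} and \eqref{Cnclsn:cusp_domain_prmtrs}, and a Kobayashi-distance computation for \eqref{Cnclsn:cusp_domain_koba_est}. Both rest on the explicit biholomorphism $\phi_\alpha\circ \inv : S_{a,h} \to \cspdmaltalt$ together with the expansion \eqref{E:fn_dfng_bdry_of_T(a,h)_near_0}.

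For \eqref{Cnclsn:cusp_domain_bdry} and \eqref{Cnclsn:cusp_domain_prmtrs}, the plan is to start from $f(x) = hx^2 + O(x^4)$ and write a boundary point $z = x + iy \in \bdy T_{a,h}$ with $x > 0$ small in polar form $z = re^{i\theta}$: a direct series expansion gives $r = x(1 + O(x^2))$ and $\theta = \pm hx + O(x^3)$. Since $\theta$ stays small, $\phi_\alpha$ acts through the principal branch and
\[
  \phi_\alpha(z) = r^\alpha e^{i\alpha\theta} = x^\alpha \pm i\alpha h\,x^{\alpha+1} + O(x^{\alpha+2}).
\]
Setting $w \defeq \phi_\alpha(z)$ and using $p = (1+\alpha)/\alpha$ (so that $\alpha p = \alpha + 1$), inverting $\rprt(w) = x^\alpha(1 + O(x^2))$ produces
\[
  \iprt(w) = \pm \alpha h\,\rprt(w)^p\bigl(1 + O(\rprt(w)^{1/\alpha})\bigr).
\]
Conclusion \eqref{Cnclsn:cusp_domain_bdry} is then immediate for any $C_1 < \alpha h < C_2$ and $\epsilon$ small enough. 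For \eqref{Cnclsn:cusp_domain_prmtrs}, the leading coefficient being \emph{exactly} $\alpha h$ means that the error factor $1 + O(\rprt(w)^{1/\alpha})$ can be forced to be at most $M$ for any prescribed $M \geqslant 2$ by shrinking $\epsilon$ further, giving $C_2 = M h \alpha$. The dependence on $a$ enters because, near $0$, $\bdy T_{a,h}$ follows the tangent circles $\{|w \mp i/(2h)| = 1/(2h)\}$ only until it meets the outer arc from $\{|w - 1/(2a)| = 1/(2a)\}$, i.e.\ only up to $\rprt(z) \sim a/(a^2 + h^2)$; transported by $\phi_\alpha$, this threshold shrinks as $a \nearrow +\infty$, forcing $\epsilon$ to decrease.

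For \eqref{Cnclsn:cusp_domain_koba_est}, the idea is to transport the Kobayashi distance back to $S_{a,h}$: the lift of $x \in \cspdmaltalt \cap \R^+$ along $\phi_\alpha \circ \inv$ is $x^{-1/\alpha} \in S_{a,h}\cap \R$, so
\[
  \koba_{\cspdmaltalt}(x_0, x) = \koba_{S_{a,h}}\bigl(x_0^{-1/\alpha},\,x^{-1/\alpha}\bigr).
\]
I would then uniformise $S_{a,h}$ by the explicit biholomorphism
\[
  \Phi(z) \defeq \frac{\sinh\!\bigl(\pi(z-a)/(2h)\bigr) - 1}{\sinh\!\bigl(\pi(z-a)/(2h)\bigr) + 1},
\]
which is the composition of the affine rescaling of $S_{a,h}$ onto the half-strip $\{\rprt > 0,\ |\iprt| < \pi/2\}$, the map $\sinh$ from this half-strip onto the right half-plane, and the Cayley transform onto $\unitdisk$. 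Setting $s(u) \defeq \pi(u-a)/(2h)$ and $\tau(u) \defeq \Phi(u)$ for real $u > a$, a brief computation gives $(1 + \tau(u))/(1 - \tau(u)) = \sinh s(u)$; hence for $u_0 < u$,
\[
  \koba_\unitdisk\bigl(\tau(u_0), \tau(u)\bigr) = \frac{1}{2}\log\frac{1-\tau(u_0)}{1+\tau(u_0)} + \frac{1}{2}\log \sinh s(u).
\]
Using $\sinh s \leqslant e^s$ and substituting $u = x^{-1/\alpha}$ then yields
\[
  \koba_{\cspdmaltalt}(x_0, x) \leqslant C + \frac{\pi}{4h}\,x^{-1/\alpha},
\]
for a constant $C = C(x_0, a, h)$, as required.

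The most delicate points are the sharpness of the leading coefficient $\alpha h$ in \eqref{Cnclsn:cusp_domain_prmtrs}\,---\,without which $C_2 = M h \alpha$ could not be achieved as a specific multiple of $h\alpha$\,---\,and the identity $(1+\tau)/(1-\tau) = \sinh s$ in \eqref{Cnclsn:cusp_domain_koba_est}, which is precisely what produces the sharp constant $\pi/(4h)$ in the final bound rather than some cruder multiple of it.
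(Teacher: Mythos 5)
Your proof is correct and takes essentially the same route as the paper's: both derive the boundary asymptotics from the expansion $f(x)=hx^2+O(x^4)$ (you via polar form, the paper via the binomial series for $(x+iy)^\alpha$, with the same leading term $\alpha h\,x^{\alpha+1}$), and for part (3) your map $\Phi$ is exactly the paper's $f_4\circ f_3\circ f_2\circ f_1$ (since $\sin(i\zeta)=i\sinh\zeta$), with the identity $(1+\tau)/(1-\tau)=\sinh s$ doing the same work as the paper's explicit expansion of $\log\tfrac{1+\Phi}{1-\Phi}$ after it normalises at the base point $o$ with $\Phi(o)=0$. The only differences are cosmetic (the paper picks $o$ so that the Poincar\'e distance formula reduces to $\tfrac12\log\tfrac{1+\Phi(x)}{1-\Phi(x)}$ and then invokes the triangle inequality, whereas you compute directly between $x_0$ and $x$).
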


\begin{proof}
To prove \eqref{Cnclsn:cusp_domain_bdry}, we must examine the image of $T_{a, h}$ under $\phi_{\alpha}$ close to 
$0\in \bdy T_{a, h}$. Let $c_1, c_2$ be the constants given by \eqref{E:constants_qcusp},
and let the function $f$ be as introduced just prior to \eqref{E:fn_dfng_bdry_of_T(a,h)_near_0}.
We examine the images of $\grph(f)$ and $\grph(-f)$ under 
$\phi_{\alpha}$. Let us, for example, examine the image of $\grph(f)$ under $\phi_{\alpha}$. An arbitrary element of
$\grph(f)$ that is close to $0$ can be written as $x+iy$, where $x \geqslant 0$ and $c_1 x^2 \leqslant y \leqslant c_2 x^2$.
For $x > 0$ and sufficiently small, we compute:
\begin{align*}
  \phi_{\alpha}(z) &= (x+iy)^{\alpha} \\
  &= x^{\alpha} \bigg( 1 + \sum_{j=1}^{\infty} \frac{ (-1)^j }{ (2j)! } \ \prod_{\nu=0}^{2j-1}\!(\alpha-\nu)
  	\frac{y^{2j}}{x^{2j}} \bigg) + i x^{\alpha} \bigg( \sum_{j=0}^{\infty} \frac{ (-1)^j }{ (2j\!+\!1)! } \
  	\prod_{\nu=0}^{2j}\!(\alpha-\nu) \frac{ y^{2j+1} }{ x^{2j+1} } \bigg).
\end{align*}

Using the fact that $c_1x^2 \leqslant y \leqslant c_2 x^2$, it is easy to see that
\begin{align*}
  \rprt(\phi_{\alpha}(z)) &= x^\alpha + O(x^{2+\alpha}), \\
  c_1\alpha x^{1+\alpha}(1 - O(x^2)) \leqslant \iprt(\phi_{\alpha}(z)) &\leqslant c_2\alpha x^{1+\alpha}(1 + O(x^2))
\end{align*}
for $z = x + iy \in \grph(f)$ and for $x > 0$ sufficiently small. 

\smallskip 

It follows from this that we can find constants $\epsilon, C_1, C_2 > 0$ such that for all $w \in \bdy \cspdmaltalt$ with
$\rprt(w)
\leqslant \epsilon$ and $\iprt(w) \geqslant 0$,
\[
  C_1 \big( \rprt(w) \big)^p \leqslant \iprt(w) \leqslant C_2 \big( \rprt(w) \big)^p.
\]
From this and the fact that, if $z\in \bdy T_{a,h} \cap \{ \iprt(z) \leqslant 0 \}$, then
$z\in \grph(-f)$ (when $\rprt(z)$ is sufficiently small), part~\eqref{Cnclsn:cusp_domain_bdry} follows.

\smallskip

Part~\eqref{Cnclsn:cusp_domain_prmtrs} is elementary and follows from the manner in which $\domain(f)$, by construction,
depends on $a$, from \eqref{E:fn_dfng_bdry_of_T(a,h)_near_0}, and from the estimates in the last paragraph.

\smallskip  

We now address the Kobayashi-distance inequality that we need. We have a biholomorphism $\Phi_{\alpha,a,h}$ from
$\cspdmaltalt$ onto $\unitdisk$, given by
\[
  \Phi_{\alpha,a,h} = f_4 \circ f_3 \circ f_2 \circ f_1 \circ \inv
  \circ \big(\!\left.\phi_{\alpha}\right|_{T_{a,h}}\big)^{-1},
\] 
where
\begin{align*}
  f_1(z) &= i(z-a) \quad \forall \, z \in S_{a,h}, \\
  f_2(z) &= \frac{\pi z}{2h} \quad \forall \, z \in \{w \in \C \mid -h < \rprt(w) < h, \, \iprt(w) > 0\}, \\
  f_3(z) &= \sin(z) \quad \forall \, z \in \{w \in \C \mid -\pi/2 < \rprt(w) < \pi/2, \, \iprt(w) > 0\}, \\
  f_4(z) &= \frac{z-i}{z+i} \quad \forall \, z \in \{ w \in \C \mid \iprt(w) > 0 \}. 
\end{align*}
The explicit expression for $\Phi_{\alpha,a,h}$ is
\begin{equation} \label{E:xplct_xprssn_Phi_alpha_a_h}
 \Phi_{\alpha,a,h}(z) = \frac{ \sin\left( \tfrac{\pi i}{2h} \big( \tfrac{1}{z^{1/\alpha}}-a \big) \right) - i }{
 \sin\left( \tfrac{\pi i}{2h} \big( \tfrac{1}{z^{1/\alpha}}-a \big) \right) + i } \quad \forall \, z \in \cspdmaltalt.
\end{equation}
Observe that  $\Phi_{\alpha,a,h}$ maps the 
closed and bounded interval $\overline{\cspdmaltalt \cap \R}$
homeomorphically onto $[-1,1]$. 
Furthermore, it is easy to check that $\Phi_{\alpha,a,h}$ maps the point
\begin{equation}\label{E:base-pt_o}
  o_{\alpha, a, h} \defeq o \defeq 1/{\big( (2h/\pi)\log(\sqrt{2}+1)+a \big)}^{\alpha}
\end{equation}
of $\cspdmaltalt$ to $0$ and that
if $x \in \cspdmaltalt \cap \R$ is less than $o$ then $\Phi_{\alpha,a,h}(x) \in (0, 1)$.
Therefore, for all such $x$,
\[
  \koba_{\cspdmaltalt}(o, x) = \koba_{\unitdisk}\big( 0, \Phi_{\alpha,a,h}(x) \big)
  = \frac{1}{2} \log \left(\frac{1+\Phi_{\alpha,a,h}(x)}{1-\Phi_{\alpha,a,h}(x)} \right).
\]
Using \eqref{E:xplct_xprssn_Phi_alpha_a_h}, we obtain
\begin{align*}
  \frac{1}{2} \log\left( \frac{1+\Phi_{\alpha,a,h}(x)}{1-\Phi_{\alpha,a,h}(x)} \right)
  &= \frac{1}{2} \log\left( \exp\left\{\frac{\pi}{2h}\Big( \frac{1}{x^{1/\alpha}}\!-\!a \Big) \right\}
  	 - \exp\left\{ -\frac{\pi}{2h} \Big( \frac{1}{x^{1/\alpha}}\!-\!a \Big)\right\}\right) - \frac{\log 2}{2}  \\
  &\leqslant \frac{1}{2} \log\left( \exp\left\{ \frac{\pi}{2h} \Big( \frac{1}{x^{1/\alpha}}\!-\!a \Big) \right\} \right) \\
  &\leqslant \frac{\pi}{4 h}x^{-1/\alpha}.
\end{align*}
From this and the triangle inequality, \eqref{Cnclsn:cusp_domain_koba_est} of our proposition follows.
\end{proof}

The next few lemmas establish some basic observations that will\,---\,given a
caltrop $\OM\subset \C^n$, $n\geq 2$\,---\,enable us to affinely embed copies of $\cspdmaltalt$, for
suitable choices of the parameters $\alpha$, $a$ and $h$, into $\OM$ in the manner hinted at in the
beginning of this section. (The actual estimates showing that caltrops possess the properties stated in the
General Visibility Lemma will be obtained in the next section.) A note about
our notation: in the lemmas that follow, the point $o$ will be as introduced in \eqref{E:base-pt_o}, and
will be associated to the specific $\cspdmaltalt$ occurring in each lemma. Also, the lemmas below hold true
for the parameter $p\in (1, 2)$, and will be proved as such. In the next section, where caltrops make an appearance,
we shall restrict $p$ to $(1, 3/2)$.

\begin{lemma} \label{L:elem_fn_ineq}
	Suppose $\epsilon>0$ and $\phi:[0,\epsilon) \to \R$ is a continuous, strictly increasing function that is differentiable
	on $(0,\epsilon)$, such that $\phi'$ is increasing and such that $\phi(0)=0$. Then, for every
	 $(x,y) \in [0, +\infty)\times [0, +\infty)$ such that $x+y<\epsilon$, $\phi(x+y) \geqslant \phi(x)+\phi(y)$. 
\end{lemma}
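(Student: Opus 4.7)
The plan is to reduce the superadditivity inequality to a monotonicity statement for a one-variable auxiliary function. Fix $y \in [0,+\infty)$ with $y < \epsilon$ and consider
\[
  g(x) \defeq \phi(x+y) - \phi(x) - \phi(y), \qquad x \in [0, \epsilon - y).
\]
The hypotheses on $\phi$ guarantee that $g$ is continuous on $[0, \epsilon - y)$ and differentiable on $(0, \epsilon - y)$. At $x = 0$, using $\phi(0) = 0$, one sees immediately that $g(0) = 0$.

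Next I would compute $g'(x) = \phi'(x+y) - \phi'(x)$ for $x \in (0, \epsilon - y)$. Since $x + y \geqslant x$ and $\phi'$ is (weakly) increasing on $(0, \epsilon)$ by hypothesis, we get $g'(x) \geqslant 0$ throughout $(0, \epsilon - y)$. Hence $g$ is non-decreasing on $[0, \epsilon - y)$, and combined with $g(0) = 0$ this yields $g(x) \geqslant 0$ for every $x \in [0, \epsilon - y)$, which is precisely the desired inequality $\phi(x+y) \geqslant \phi(x) + \phi(y)$.

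There is no real obstacle here; the only minor subtlety is to check that the argument handles the boundary cases $x = 0$ or $y = 0$, but these are immediate from $\phi(0) = 0$, which makes the inequality trivially an equality. An alternative (and essentially equivalent) route would be to invoke the mean value theorem twice: writing $\phi(x+y) - \phi(y) = \phi'(\xi_1) \cdot x$ with $\xi_1 \in (y, x+y)$ and $\phi(x) - \phi(0) = \phi'(\xi_2) \cdot x$ with $\xi_2 \in (0, x)$ (under the WLOG assumption $x \leqslant y$, which forces $\xi_2 < \xi_1$) and then using the monotonicity of $\phi'$. I prefer the auxiliary-function approach as it avoids the WLOG step and treats $x$ and $y$ on equal footing via differentiation in one slot while keeping the other fixed.
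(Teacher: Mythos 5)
Your argument is correct. The paper does not actually give a proof—it states only that ``The proof of the above lemma is an elementary exercise in calculus''—so there is no competing approach to compare against. Your auxiliary-function argument (fixing $y$, showing $g(x) \defeq \phi(x+y)-\phi(x)-\phi(y)$ has $g(0)=0$ and $g'(x)=\phi'(x+y)-\phi'(x)\geqslant 0$ on $(0,\epsilon-y)$, hence $g\geqslant 0$) is a clean and correct instantiation of that exercise; the degenerate cases $x=0$ or $y=0$ reduce to $\phi(0)=0$, and the two-fold mean-value-theorem alternative you sketch is equally valid.
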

The proof of the above lemma is an elementary exercise in calculus.

\begin{lemma} \label{L:ess_planar_lmm}
	Let $\psi:[0, A] \to [0,+\infty)$ be a continuous function that is $\smoo^2$ on $(0, A)$, where $A > 0$. Let
	$p \in (1, 2)$. Assume furthermore that
	\begin{itemize}
		\item there exists a constant $C > 1$ such that
		\[
		  (1/C)x^p \leqslant \psi(x) \leqslant Cx^p \quad \forall \, x \in [0, A];
		\]
		\item $\psi$ is strictly increasing; and
		\item $\psi'$ is increasing on $(0, A)$.	
	\end{itemize}
	Write $\bldblkdm \defeq \{ z \in \C \mid 0 < \rprt(z) < A, \ |\iprt(z)| < \psi( \rprt(z) ) \}$. Then there
	exist a constant $B \in (0,A)$, a compact subset 
	 $K$ that intersects $\{z \in \C \mid \rprt(z) = A\}$ and
	such that $K\setminus \{z \in \C \mid \rprt(z) = A\} \varsubsetneq \bldblkdm$,
	and constants $a,h > 0$ such that for each $x+iy \in \bldblkdm$ with $x \leqslant B$, we have
	\begin{enumerate}
		\item \label{Cnclsn:ess_planar_incl} $( \psi^{-1}(|y|)+iy ) + \cspdmalt \subseteq \bldblkdm$;
		\item \label{Cnclsn:ess_planar_x-left} $\psi^{-1}(|y|) + o > x$;
		\item \label{Cnclsn:ess_planar_o-inside} $( \psi^{-1}(|y|)+iy ) + o \in K$; and 
		\item \label{Cnclsn:ess_planar_dist} $\delta_{\bldblkdm}(x+iy) \leqslant |\psi^{-1}(|y|) - x|$.
	\end{enumerate}	
\end{lemma}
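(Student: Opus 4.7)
I would set $\alpha := 1/(p-1)$ so that the exponent $(1+\alpha)/\alpha$ appearing in Proposition~\ref{P:cusp_domain_props} equals $p$; this forces the cusp domain $\cspdmalt$ to have the same H{\"o}lderian cusp exponent at $0$ as $\bdy\bldblkdm$ has at the origin. The plan is then to choose $h$ small enough (so that $C_2 := 2h\alpha < 1/C$, where $C$ is the constant from the hypothesis on $\psi$) and $a$ large enough that the pointwise estimate $|\iprt(w)| \leqslant C_2\,\rprt(w)^p$ from Proposition~\ref{P:cusp_domain_props}-(\ref{Cnclsn:cusp_domain_bdry}) holds throughout $\cspdmalt$, and moreover so that the translated cusp $(\psi^{-1}(|y|) + iy) + \cspdmalt$ stays inside $\bldblkdm$. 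The matching of exponents, together with the superadditivity of $\psi$ supplied by Lemma~\ref{L:elem_fn_ineq}, will drive the verification of condition~(\ref{Cnclsn:ess_planar_incl}).

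More precisely, for condition~(\ref{Cnclsn:ess_planar_incl}), fix $w = u + iv \in \cspdmalt$ and set $z' := (\psi^{-1}(|y|) + iy) + w$. One must verify $0 < \rprt(z') < A$ and $|\iprt(z')| < \psi(\rprt(z'))$. The first inequality follows by choosing $B$ small and $a$ large, since then $\rprt(w)$ is uniformly small on $\cspdmalt$. For the second, Lemma~\ref{L:elem_fn_ineq} applied to $\psi$ (which is strictly increasing with increasing derivative and $\psi(0) = 0$) gives
\[
  \psi(\psi^{-1}(|y|) + u) \;\geqslant\; \psi(\psi^{-1}(|y|)) + \psi(u) \;=\; |y| + \psi(u),
\]
so it suffices to establish $|v| < \psi(u)$. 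Invoking Proposition~\ref{P:cusp_domain_props}-(\ref{Cnclsn:cusp_domain_prmtrs}) with $M := 2$, I choose $h$ small enough that $C_2 = 2h\alpha < 1/C$, and then $a$ sufficiently large that $\cspdmalt$ is contained in the region $\{\rprt(z) < \epsilon\}$ on which Proposition~\ref{P:cusp_domain_props}-(\ref{Cnclsn:cusp_domain_bdry}) gives $|v| < C_2 u^p$. Combining with $\psi(u) \geqslant (1/C)\,u^p$ yields $|v| < C_2 u^p < (1/C)\,u^p \leqslant \psi(u)$, as required.

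Conditions~(\ref{Cnclsn:ess_planar_x-left})--(\ref{Cnclsn:ess_planar_dist}) are comparatively routine. For~(\ref{Cnclsn:ess_planar_dist}): since $\psi^{-1}(|y|) + iy \in \bdy\bldblkdm$, the Euclidean distance from $x + iy$ to $\bdy\bldblkdm$ is at most $\|(x+iy) - (\psi^{-1}(|y|) + iy)\| = |\psi^{-1}(|y|) - x|$. For~(\ref{Cnclsn:ess_planar_x-left}): impose $B < o$, where $o$ is the base point from \eqref{E:base-pt_o} associated with the chosen $\alpha$, $a$, $h$; then $x \leqslant B < o \leqslant \psi^{-1}(|y|) + o$. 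For~(\ref{Cnclsn:ess_planar_o-inside}): I take $K$ to be the union of the compact arc $\Gamma := \{\psi^{-1}(|y|) + iy + o \,:\, |y| \leqslant \psi(B)\}$ (which by~(\ref{Cnclsn:ess_planar_incl}) applied with $o \in \cspdmalt$ lies in $\bldblkdm$) with any line segment $L$ joining a point of $\Gamma$ to some point on $\{\rprt(z) = A\}$ while remaining in $\bldblkdm \cup \{\rprt(z) = A\}$; then $K$ is compact, intersects $\{\rprt(z) = A\}$, and $K \setminus \{\rprt(z) = A\} \subset \bldblkdm$.

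The delicate step is the simultaneous tuning of $a$, $h$, and $B$. Proposition~\ref{P:cusp_domain_props}-(\ref{Cnclsn:cusp_domain_prmtrs}) warns that the admissible $\epsilon$ decreases as $a$ increases, whereas I need $\cspdmalt \subset \{\rprt(z) < \epsilon\}$ for large $a$. The way out is that $\cspdmalt$ is itself contained in a disk about $0$ whose radius shrinks to $0$ as $a \nearrow +\infty$, so that for sufficiently large $a$ the inclusion does hold. Once $h$ is small enough that $2h\alpha < 1/C$ and $a$ is correspondingly large, the choice $B < \min\{o,\,A - \sup_{w \in \cspdmalt}\rprt(w)\}$ will make all four conditions simultaneously hold.
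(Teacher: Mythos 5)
Your proposal is correct and follows essentially the same route as the paper's proof: set $\alpha = 1/(p-1)$ to match cusp exponents, invoke Proposition~\ref{P:cusp_domain_props}-(\ref{Cnclsn:cusp_domain_prmtrs}) to pick $h$ so that $Mh\alpha < 1/C$, take $a$ large, and use the superadditivity from Lemma~\ref{L:elem_fn_ineq} to verify the inclusion (\ref{Cnclsn:ess_planar_incl}), with $B < o$ handling (\ref{Cnclsn:ess_planar_x-left}) and the boundary-point observation handling (\ref{Cnclsn:ess_planar_dist}). The only cosmetic difference is your compact set $K$ (an arc plus a segment reaching $\{\rprt(z)=A\}$) versus the paper's two-dimensional translated cusp $K = \{z : o \leqslant \rprt(z) \leqslant A,\ |\iprt(z)| \leqslant \psi(\rprt(z)-o)\}$; both satisfy the requirements.
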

\begin{proof}
It follows from Proposition~\ref{P:cusp_domain_props} and the observation made prior to it that we may fix a constant
$M \geqslant 2$ such that for every $\alpha>1$ and every $a,h>0$ there exists an $\epsilon \equiv
\epsilon(\alpha,a,h)>0$ such that
\[
  \cspdmaltalt \subset \{ w \in \C \mid 0 < \rprt(w) < \epsilon, \, |\iprt(w)| < M h \alpha 
  (\rprt(w))^{(1+\alpha)/\alpha} \} \defines S^{\alpha,a,h}
\]
and such that, for any given $\alpha>1$ and $h>0$, $\epsilon\to 0$ as $a\to +\infty$. We let $\alpha \defeq
1/(p-1)$. We note that, by the geometry of $\cspdmaltalt$,
the $\epsilon$ with the above properties does not decrease as we decrease $h$. 
Hence, we can choose $a$ and $h$ such 
that $\epsilon < A/2$ and $Mh\alpha < 1/C$. Now, fix a constant $B$, $0<B<A$ so that 
\[
  B < \min(o, (\epsilon/2) ). 
\] 
	
\smallskip
	
Let $z = x+iy \in \bldblkdm$ and $x \leqslant B$. We consider the set $\cspdmaltalt + ( \psi^{-1}(|y|)+iy )$.
An arbitrary element of this set is of the form
$(\psi^{-1}(|y|)+s)+i(y+t)$, where $s+it \in \cspdmaltalt$. Since $\cspdmaltalt \subset S^{\alpha,a,h}$ by
construction, $0 < s
<\epsilon$ and $|t| < M h \alpha s^p$. This element is in $\bldblkdm$ if and only if
\[
  0 < \psi^{-1}(|y|)+s < A \quad\text{and}
  \quad |y+t| < \psi\big( \psi^{-1}(|y|)+s \big).
\] 
Now $0 \leqslant \psi^{-1}(|y|) < x \leqslant B$. By our choice of $B$, we have
$0 < \psi^{-1}(|y|) + s < (\epsilon/2) + \epsilon < A$.

\smallskip

Thus, to establish part~\eqref{Cnclsn:ess_planar_incl}, we must show that $|y+t| < \psi\big( \psi^{-1}(|y|) + s \big)$. As
$\bldblkdm$ is symmetric about the real axis, it suffices to deal with the case $y \geqslant 0$, $t \geqslant 0$.
Notice that $\psi$
satisfies the hypothesis of Lemma~\ref{L:elem_fn_ineq}. We have
\begin{align*}
  \psi\big( \psi^{-1}(|y|)+s \big) 
  &\geqslant y + \psi(s)  & &\text{(by Lemma~\ref{L:elem_fn_ineq})}\\
  &\geqslant y + (1/C)s^p & &\text{(by hypothesis).}
\end{align*}
Recall that $Mh\alpha < 1/C$. 
Therefore,
\[
  |y+t|\,=\,y+t\,<\,y+(1/C)s^p\,\leqslant\,\psi( \psi^{-1}(|y|)+s ).
\]
This shows that $(\psi^{-1}(|y|)+s)+i(y+t) \in \bldblkdm$, for $y, t \geqslant 0$. In
view of our remark on the symmetry of $\bldblkdm$, this completes the proof of
part~\eqref{Cnclsn:ess_planar_incl}.
	
\smallskip 
	
For any $x+iy$ as in the previous paragraphs, $\psi^{-1}(|y|) + o > B \geqslant x$. The first inequality
follows from our choice of $B$. This proves part~\eqref{Cnclsn:ess_planar_x-left}.
	
\smallskip 

Define $K \defeq \{ z\in \C \mid o \leqslant \rprt(z)\leqslant A, \ |\iprt(z)| \leqslant \psi(\rprt(z) - o) \}$.
Write $\bldblkdm_{B} \defeq \{z \in \bldblkdm \mid \rprt(z) \leqslant B\}$. 
For any $x + iy\in \bldblkdm_{B}$:
\[
  o\,\leqslant\,o + \psi^{-1}(|y|)\,<\,o + x\,\leqslant\,o + B\,<\,2o\,<\,A.
\]
Furthermore, $|y| = \psi\big( (\psi^{-1}(|y|) + o) - o \big)$, whence $o + ( \psi^{-1}(|y|)+iy ) \in K$. Clearly, $K$
intersects $\{z \in \C \mid \rprt(z) = A\}$ and $K\setminus \{z \in \C \mid \rprt(z) =A\} \varsubsetneq \bldblkdm$. This
proves part~\eqref{Cnclsn:ess_planar_o-inside}. 
	
\smallskip
	
Finally, for $x+iy \in \bldblkdm_{B}$, $\delta_{\bldblkdm}(x+iy) \leqslant
| (\psi^{-1}(|y|)+iy)-(x+iy) | = |\psi^{-1}(|y|)-x|$ because $\psi^{-1}(|y|)+iy \in \bdy\bldblkdm$. This
proves part~\eqref{Cnclsn:ess_planar_dist} and completes the proof.	
\end{proof}

The next lemma is essentially a parametrized version of the one above. It is related to embedding the model
region $\cspdmaltalt$ into a caltrop within a spike (see Section~\ref{S:caltrops} to recall
terminology),
as we shall see in Section~\ref{S:visibility-caltrops}. A note about our notation: we shall abbreviate
$(z_1,\dots, z_{n-1}, z_n)\in \C^n$ as $(z', z_n)$.

\begin{lemma}\label{L:ess_bulk_lmm}
	Let $\psi:[0, A] \to [0,+\infty)$ be as in Lemma~\ref{L:ess_planar_lmm}. Let
	\[
	  D \defeq \big\{ z \in \C^n \mid 0 < \rprt(z_n) < A, \, \iprt(z_n)^2 + \|z'\|^2 < \psi( \rprt(z_n) )^2
	  \big\}.
	\]
	Let $w' \in \C^{n-1}$ and let
	\[
	  \bldblkdm_{w'} \defeq \pi_n \big[ \big( (w',0) + \{0_{n-1}\} \times \C \big) \cap D \big].
	\]
	Write $\alpha = 1/(p-1)$. Then there exist constants $a,h,B >  0$ and 
	a compact subset $K$ of $\{z \in \C^n \mid \rprt(z_n) \leqslant A\}$ that intersects
	$\{ z \in \C^n \mid \rprt(z_n) = A \}$ and so that $K \setminus \{ z \in \C^n \mid \rprt(z_n)
	= A \} \varsubsetneq D$, such that for every $w' \in \C^{n-1}$ with
	$\|w'\| < \psi( B/2 )$, and every $\zeta
	\in \bldblkdm_{w'}$ with $\rprt(\zeta) \leqslant B$,
	\vspace{1mm} 
	
	\noindent{
	\begin{enumerate}
		\item \label{Cnclsn:par_inc_md_inc} $\big( \psi^{-1}( S(\zeta, w')\,) + i\iprt(\zeta)
		\big)+\cspdmaltalt \subseteq \bldblkdm_{w'}$;
		\vspace{2mm}
		
		\item \label{Cnclsn:par_inc_rch_byd_rprt} $\psi^{-1}( S(\zeta, w')\,) + o >
		\rprt(\zeta)$;
		\vspace{2mm}
		
		\item \label{Cnclsn:par_inc_rch_into_cpt} $\big( \psi^{-1}( S(\zeta, w')\,) +
		i\iprt(\zeta) \big)+o \in \pi_n\big[ \big( (w',0) + \{0_{n-1}\} \times \C \big) \cap K \big]$;
		\vspace{2mm}
		
		\item \label{Cnclsn:par_inc_dist_est} $\delta_D( (w',\zeta) ) 
		\leqslant |\rprt(\zeta) - \psi^{-1}\big( S(\zeta, w')\,\big)|$	;	 
	\end{enumerate}
	where $S(\zeta, w')\defeq \sqrt{ \iprt(\zeta)^2+\|w'\|^2 }$ and
	$o$ is the point in $\cspdmaltalt$ given by \eqref{E:base-pt_o}.}
\end{lemma}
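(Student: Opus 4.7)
The plan is to adapt the proof of Lemma~\ref{L:ess_planar_lmm} with essentially bookkeeping changes. Note first that the slice
\[
  \bldblkdm_{w'} = \{\zeta \in \C : 0 < \rprt(\zeta) < A, \ S(\zeta, w') < \psi(\rprt(\zeta))\},
\]
so its boundary on the horizontal line $\{\iprt = \iprt(\zeta)\}$ has real part $\psi^{-1}(S(\zeta, w'))$. Thus the role played in Lemma~\ref{L:ess_planar_lmm} by $|\iprt(z)|$ is now played by $S(\zeta, w')$. With this dictionary in mind, I would choose the constants $\alpha = 1/(p-1)$, $a,h > 0$, $M \geqslant 2$, and $B \in (0, A)$ exactly as in Lemma~\ref{L:ess_planar_lmm}, so that $\cspdmaltalt \subset S^{\alpha,a,h}$ with the corresponding $\epsilon < A/2$, $Mh\alpha < 1/C$, and $B < \min(o, \epsilon/2)$.

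For Part~\eqref{Cnclsn:par_inc_md_inc}, take $s + it \in \cspdmaltalt$, so $0 < s < \epsilon$ and $|t| < Mh\alpha s^p$. The candidate point is $(\psi^{-1}(S(\zeta,w')) + s) + i(\iprt(\zeta) + t)$. The condition $0 < \psi^{-1}(S(\zeta,w')) + s < A$ follows exactly as in the planar case. The substantive inequality to verify is
\[
  (\iprt(\zeta) + t)^2 + \|w'\|^2 \,<\, \psi\big(\psi^{-1}(S(\zeta, w')) + s\big)^{2}.
\]
By Lemma~\ref{L:elem_fn_ineq} (applied to $\psi$, whose hypotheses are met by assumption), the right-hand side dominates $(S(\zeta, w') + \psi(s))^{2} = S(\zeta, w')^{2} + 2 S(\zeta, w')\psi(s) + \psi(s)^{2}$; since $S(\zeta, w')^{2} = \iprt(\zeta)^{2} + \|w'\|^{2}$, it suffices to establish
\[
  2\iprt(\zeta) t + t^{2} \,\leqslant\, 2 S(\zeta, w')\psi(s) + \psi(s)^{2}.
\]
This follows from the trivial but decisive inequality $|\iprt(\zeta)| \leqslant S(\zeta, w')$, together with $|t| < Mh\alpha s^{p} \leqslant (1/C) s^{p} \leqslant \psi(s)$, which control the linear and quadratic parts, respectively.

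Parts~\eqref{Cnclsn:par_inc_rch_byd_rprt}--\eqref{Cnclsn:par_inc_dist_est} are essentially direct translations of the corresponding steps in Lemma~\ref{L:ess_planar_lmm}. For Part~\eqref{Cnclsn:par_inc_rch_byd_rprt}, membership $\zeta \in \bldblkdm_{w'}$ yields $\psi^{-1}(S(\zeta, w')) < \rprt(\zeta) \leqslant B < o$, so $\psi^{-1}(S(\zeta, w')) + o > B \geqslant \rprt(\zeta)$. For Part~\eqref{Cnclsn:par_inc_rch_into_cpt}, define
\[
  K \defeq \{z \in \C^{n} : o \leqslant \rprt(z_{n}) \leqslant A, \ \iprt(z_{n})^{2} + \|z'\|^{2} \leqslant \psi(\rprt(z_{n}) - o)^{2}\};
\]
the required membership of $\big(w',\,o + \psi^{-1}(S(\zeta, w')) + i\iprt(\zeta)\big)$ in $K$ follows because $\iprt(\zeta)^{2} + \|w'\|^{2} = S(\zeta, w')^{2} = \psi\big(\psi^{-1}(S(\zeta, w'))\big)^{2}$, while $K \cap \{\rprt(z_{n}) = A\} \neq \varnothing$ and $K \setminus \{\rprt(z_{n}) = A\} \varsubsetneq D$ follow by inspection, using strict monotonicity of $\psi$. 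For Part~\eqref{Cnclsn:par_inc_dist_est}, the same identity shows that $\big(w',\,\psi^{-1}(S(\zeta, w')) + i\iprt(\zeta)\big)$ lies on $\bdy D$, so its Euclidean distance to $(w', \zeta)$, namely $|\rprt(\zeta) - \psi^{-1}(S(\zeta, w'))|$, dominates $\delta_{D}((w', \zeta))$.

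The one genuinely new ingredient beyond the proof of Lemma~\ref{L:ess_planar_lmm} is the observation $|\iprt(\zeta)| \leqslant S(\zeta, w')$, which is what allows the extra $\|w'\|^{2}$-contribution on the left of the squared inequality in Part~\eqref{Cnclsn:par_inc_md_inc} to be absorbed into $S(\zeta, w')^{2}$ on the right. Beyond that, I expect no serious obstacle; the rest is a matter of carrying the substitution $|y| \rightsquigarrow S(\zeta, w')$ through the planar argument.
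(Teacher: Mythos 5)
Your proposal is correct and follows essentially the same route as the paper's proof: choose $a$, $h$, $B$ and $K$ exactly as in Lemma~\ref{L:ess_planar_lmm} (with $\bldblkdm_{0_{n-1}}=\bldblkdm$), apply Lemma~\ref{L:elem_fn_ineq} with the substitution $|y|\rightsquigarrow S(\zeta,w')$, and use $|\iprt(\zeta)|\leqslant S(\zeta,w')$ to absorb the extra $\|w'\|^2$ contribution. The paper leaves the last observation as ``obvious''; you make it explicit, and you work with $\psi(s)$ rather than the lower bound $(1/C)s^p$ used in the paper, which is a marginally cleaner but equivalent bookkeeping choice. (One pedantic point: the displayed sufficient condition in part~(1) should carry a strict inequality, which your estimates do in fact deliver since $|t|<\psi(s)$ gives $t^2<\psi(s)^2$.)
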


\begin{remark}
The following expression for $\bldblkdm_{w'}$ can easily be obtained:
\[
  \big\{ \zeta \in \C \mid \psi^{-1}(\|w'\|) < \rprt(\zeta) < A, \, \iprt(\zeta)^2 + \|w'\|^2 <  \psi(\rprt(\zeta))^2 \big\}.  
\]
We see that $\bldblkdm_{w'} \neq \varnothing$ if and only if $\|w'\| < \psi(A)$. 
In particular, the sets  $\bldblkdm_{w'}$ appearing in the conclusions of the above lemma
are non-empty.
We also note that $\bldblkdm_{0_{n-1}}$ is
precisely the $\bldblkdm$ of the last lemma. We shall take
the parameters $a,h$ and $B$,
whose existence is asserted above, to be precisely the parameters obtained from
the domain $\bldblkdm = \bldblkdm_{0_{n-1}}$ using Lemma~\ref{L:ess_planar_lmm} above.
\end{remark}
		
\begin{proof}
For simplicity of notation, we shall write $c \defeq 1/C$.
The $w' = 0_{n-1}$ case is precisely the content of Lemma~\ref{L:ess_planar_lmm}.
Let $a, h$ and $B$ be as given by Lemma~\ref{L:ess_planar_lmm}. We extract from the proof
of Lemma~\ref{L:ess_planar_lmm} a couple of simple 
facts that follow from this choice of parameters, and which we shall need in this proof:
\begin{align}
  s+it\in \cspdmaltalt \Rightarrow \ 
  	B + s &< 3A/4 < A \;  \text{and} \;  |t| < cs^p; \label{E:A-B_control} \\
	o &> B. \label{E:A-B_compare}
\end{align}

We now consider the case $w' \neq 0_{n-1}$. Fix a point $\zeta\in \bldblkdm_{w'}$,
and let $\rprt(\zeta) \leqslant B$. That there \emph{is} such a point follows from our
bound on $\|w'\|$. An arbitrary element of $( \psi^{-1}(S(\zeta, w'))
+ i\iprt(\zeta) ) + \cspdmaltalt$ is of the form
\[
  \big( \psi^{-1}(S(\zeta, w')) + s\big) + i\big( \iprt(\zeta) + t \big),
\] 
where $s+it \in \cspdmaltalt$. Such a point belongs to $\bldblkdm_{w'}$ if and only if
\begin{enumerate}
	\item[$(a)$] $\psi^{-1}(S(\zeta, w')) + s < A$, and
	\item[$(b)$] $\|w'\|^2 + (\iprt(\zeta) + t )^2 < \big( \psi( \psi^{-1}(S(\zeta, w')) + s) \big)^2$.
\end{enumerate}
By symmetry, we only need to deal with $\iprt(\zeta) \geqslant 0$. We have
$\iprt(\zeta)^2 + \|w'\|^2 < \psi(\rprt(\zeta))^2$.
As $\rprt(\zeta) \leqslant B$, we have $\iprt(\zeta)^2 + \|w'\|^2 < \psi(B)^2$. 
Therefore
\[
  \psi^{-1}(S(\zeta, w')) + s\,<\,B + s\,<\,A.
\]
The last inequality follows from \eqref{E:A-B_control}. This verifies $(a)$ above. We now verify $(b)$. We have
\[
  \psi( \psi^{-1}(S(\zeta, w')) + s) \geqslant S(\zeta, w') + cs^p,
\]
by an application of Lemma~\ref{L:elem_fn_ineq}. Hence
\[
  \big( \psi( \psi^{-1}(S(\zeta, w')) + s) \big)^2 - \iprt(\zeta)^2 - \|w'\|^2 
  \geqslant 2cs^p S(\zeta, w') + c^2 s^{2p}.
\]
So $(b)$ will follow if we can show that
$2 \iprt(\zeta) t + t^2 < 2cs^p S(\zeta, w') + c^2 s^{2p}$.
But this last inequality is obvious in view of \eqref{E:A-B_control}. 
Thus, $(b)$ is proved, and with it, part~\eqref{Cnclsn:par_inc_md_inc}. 

\smallskip

We note that 
\[
  \psi^{-1}\big( S(\zeta, w') \big) + o\,\geqslant\,\psi^{-1}( \iprt(\zeta) ) + o\,>\,B\,\geqslant\,\rprt(\zeta).
\]
The second inequality above follows from \eqref{E:A-B_compare}. This
proves part \eqref{Cnclsn:par_inc_rch_byd_rprt}.

\smallskip

Let $K \defeq \{(w',\zeta) \in \C^n \mid o \leqslant \rprt(\zeta) \leqslant A, \, S(\zeta, w')
\leqslant \psi(\rprt(\zeta)-o) \}$. Clearly, $K$ is a compact subset of $\{z \in \C^n \mid \rprt(z_n)
\leqslant A\}$ that intersects $\{ z \in \C^n \mid \rprt(z_n) = A \}$, and
$K \setminus \{z \in \C^n \mid \rprt(z_n)=A\} \varsubsetneq D$. Fix $w'$ such that
$\|w'\| < \psi(B/2)$.  Consider a point $\zeta \in \bldblkdm_{w'}$ such that
$\rprt(\zeta) \leqslant B$. 
If we write
\[
  \eta \defeq \big( \psi^{-1} ( S(\zeta, w')) + i\iprt(\zeta) \big) + o,
\]
then we have
\[
  \psi^{-1}(\|w'\|) + o\,\leqslant\,\rprt(\eta)\,<\,\rprt(\zeta) + o\,\leqslant\,B + o\,<\,A. 
\]  
This last inequality follows from \eqref{E:A-B_control} (since $o\in \cspdmaltalt$). 
Furthermore, $S(\eta, w') = S(\zeta, w') = \psi(\rprt(\eta) - o)$. Thus,
$\eta\in \pi_n\big[ \big( (w',0) + \{0_{n-1}\} \times \C \big) \cap K \big]$, which
establishes part \eqref{Cnclsn:par_inc_rch_into_cpt}.

\smallskip

As for part \eqref{Cnclsn:par_inc_dist_est}, if $(w',\zeta)$ is as in the last paragraph,
then $\psi^{-1}(S(\zeta, w'))+i\iprt(\zeta) \in \bdy\bldblkdm_{w'}$.
Therefore $\dtb{D}((w',\zeta)) \leqslant \distance( \zeta, \C \setminus \bldblkdm_{w'})
\leqslant |\rprt(\zeta)- \psi^{-1}(S(\zeta, w'))|$. 
\end{proof}
\smallskip

\section{Caltrops are visibility domains with respect to the Kobayashi metric}\label{S:visibility-caltrops}
This section is devoted to the proof of Theorem~\ref{T:visibility-caltrops}. Our proof
will rely on Theorem~\ref{T:gen_visibility-lemma}. Recall that, in
the discussion related to this theorem, we had mentioned that the utility of 
Theorem~\ref{T:gen_visibility-lemma} lies in that it allows one to identify visibility domains
that \emph{do not} possess the Goldilocks property. The concluding paragraphs of this section
bear this fact out: we shall show that caltrops are not Goldilocks domains.

\smallskip

We shall need the following basic

\begin{lemma}\label{L:psi-prime}
	Let $\psi:[0, A] \to [0,+\infty)$ denote one of the functions
	$\psi_j$ occurring in Definition~\ref{D:caltrop}. Then $\psi$ is
	differentiable at $0$ and $\psi'$ is continuous on $[0, A)$, whence
	$\lim_{x \to 0^+}\psi'(x) = 0$.
\end{lemma}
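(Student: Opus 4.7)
The plan is to extract everything we need from three pieces of data: the bound $\psi(x)\leqslant Cx^p$ with $p>1$, the monotonicity of $\psi$ on $[0,A]$, and the monotonicity of $\psi'$ on $(0,A)$. First I would observe that $\psi(0)=0$, which is forced by the upper bound $\psi(x)\leqslant Cx^p$ as $x\to 0^+$. Since $\psi$ is strictly increasing on $[0,A]$ and of class $\smoo^2$ on $(0,A)$, one has $\psi'(x)\geqslant 0$ for every $x\in(0,A)$. Combined with the hypothesis that $\psi'$ is increasing on $(0,A)$, the monotone limit
\[
  L \defeq \lim_{x\to 0^+}\psi'(x) = \inf_{x\in (0,A)}\psi'(x)
\]
exists and lies in $[0,+\infty)$.

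Next I would show $L=0$ by contradiction. If $L>0$, then $\psi'(t)\geqslant L$ for every $t\in(0,A)$ by monotonicity of $\psi'$. For $0<h<x<A$ the mean-value theorem gives some $c\in(h,x)$ with $\psi(x)-\psi(h)=\psi'(c)(x-h)\geqslant L(x-h)$; letting $h\to 0^+$ and using continuity of $\psi$ at $0$ with $\psi(0)=0$, we get $\psi(x)\geqslant Lx$ for every $x\in[0,A]$. But this contradicts the bound $\psi(x)\leqslant Cx^p$ for sufficiently small $x$, since $p>1$. Hence $L=0$.

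Finally, for differentiability at $0$: the one-sided difference quotient satisfies
\[
  0 \,\leqslant\, \frac{\psi(x)-\psi(0)}{x} \,=\, \frac{\psi(x)}{x} \,\leqslant\, Cx^{p-1} \,\longrightarrow\, 0
  \quad\text{as } x\to 0^+,
\]
so $\psi$ is differentiable at $0$ with $\psi'(0)=0=L$. Since $\psi'$ is continuous on $(0,A)$ (as $\psi\in\smoo^2$ there) and $\lim_{x\to 0^+}\psi'(x)=\psi'(0)$, it is continuous on $[0,A)$, and the limit formula $\lim_{x\to 0^+}\psi'(x)=0$ follows.

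There is no genuine obstacle here; the only subtle point is recognising that the polynomial majorant $\psi(x)\leqslant Cx^p$ with $p>1$ is incompatible with any positive lower bound $\psi'\geqslant L>0$ on $(0,A)$, which is exactly what rules out $L>0$. Everything else is a direct application of monotonicity and the definition of the derivative.
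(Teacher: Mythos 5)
Your proof is correct, and it takes a genuinely different route from the paper's. The paper observes that the two-sided bound $(1/C)x^{p}\leqslant\psi(x)\leqslant Cx^{p}$ with $p>1$ forces $\psi'(0)=0$, and then invokes the Darboux property of derivatives: since $\psi'$ is increasing on $(0,A)$, its right limit at $0$ exists, and a derivative cannot have a jump discontinuity, so that right limit must coincide with $\psi'(0)=0$. You instead avoid Darboux entirely: you use monotonicity of $\psi'$ only to know that $L=\lim_{x\to 0^+}\psi'(x)$ exists, and then rule out $L>0$ directly via the mean value theorem together with the majorant $\psi(x)\leqslant Cx^{p}$ (which would be violated by $\psi(x)\geqslant Lx$ for small $x$). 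This is more elementary\,---\,it needs only MVT and the polynomial bound, not the classification of discontinuities of derivatives\,---\,at the mild cost of invoking the growth bound twice (once for $\psi'(0)=0$ and once to kill $L$). The paper's argument is terser and separates the roles of the hypotheses more cleanly (the bound gives $\psi'(0)$; monotonicity plus Darboux gives continuity), while yours is self-contained and arguably more transparent to a reader who doesn't have Darboux's theorem at their fingertips.
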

\begin{proof}
That $\psi'(0)$ exists and equals $0$ follows simply from the bounds on $\psi$. Hence,
$\psi'$ extends to a function on $[0, A)$. The nature of the discontinuities of the derivative
of a univariate function
is such that, since $\psi'$ is increasing on $(0, A)$, it cannot have a discontinuity at $0$.
\end{proof}

\begin{proof}[The proof of Theorem~\ref{T:visibility-caltrops}]
Since we will need Theorem~\ref{T:gen_visibility-lemma} to show that a caltrop 
$\OM\subset \C^n$ is a visibility domain
with respect to the Kobayashi distance, we will require two different types of estimates. We shall therefore
divide our proof into several steps. We begin with the following preliminary remark:
if $F$ and $G$ are two non-negative functions that depend on several parameters, then
we shall write $F \gtrsim G$ to mean that there exists some constant $C$ that is independent of
those parameters such that $G \leqslant C\bcdot F$. The expression $F \thickapprox G$
would mean that  $F\gtrsim G$ and $G\gtrsim F$.

\medskip

\noindent{\textbf{Step~1.} \emph{A lower bound for $\dkoba_{\OM}(w, \bcdot)$ for $w$ contained
in a spike}}
\vspace{0.5mm}

\noindent{Given the set of exceptional points $\{q_1,\dots, q_N\}\subset \bdy{\OM}$, 
\textbf{fix} an exceptional point $q_{j^*}$.
Let $p_{j^*}\in (1, 3/2)$,
$\uni^{(j^*)}\in U(n)$ and $\psi_{j^*} : [0, A_{j^*}]\to [0, +\infty)$ be the data associated to this exceptional point
given by Definition~\ref{D:caltrop}. Since $\kappa_{\OM}$ is invariant under biholomorphisms of $\OM$, and since
the the unitary transformations $\uni^{(j)} = \uni_j^{\prime}$\,---\,where $\uni_j$, $j = 1,\dots, N$, are the
holomorphic maps occurring in Definition~\ref{D:caltrop}\,---\,preserve the (Euclidean) norms of vectors,
we shall, for simplicity of notation, drop the sub/superscript ``$j^*\,$'' from the 
above-mentioned data and assume without loss of generality that
\[
  \OM\cap V_{j^*}\,=\big\{ z \in \C^n \mid 0 < \rprt(z_n) < A, \, \iprt(z_n)^2 + \|z'\|^2 < \psi( \rprt(z_n) )^2
	  \big\}
\]
(so, in the notation just explained, $q_{j^*} = q = 0$).}

\smallskip

We shall now construct a negative plurisubharmonic function on $\OM$ that has an explicit form on
a substantial portion of $\OM\cap V_{j^*}$. This will allow us to use Result~\ref{R:koba_metric_lower}
to obtain a lower bound on $\dkoba_{\OM}(w, \bcdot)$ on a portion of $\OM\cap V_{j^*}$. That there exist such
functions does not follow immediately from the existing theory owing to the presence of singularities in $\bdy{\OM}$. We shall
thus construct a function with the desired properties from basic principles. To this end,
let us write
\[
  \rho(z) \defeq \iprt(z_n)^2 + \|z'\|^2 - \psi( \rprt(z_n) )^2 \quad \forall z\in \OM\cap V_{j^*}.
\]
By the Levi-form calculation in Lemma~\ref{L:leviform}, by Lemma~\ref{L:psi-prime}, and
owing to the properties of $\psi$, we see that there exists a constant $A' \in (0, A]$ such that
\begin{equation}\label{E:levi_lo-bound_spike}
  \levi(\rho)(z; v) \geqslant \|v'\|^2 + 4^{-1}|v_n|^2 \quad
  \forall (z, v)\in (\OM\cap V_{j^*})\times \C^n : 0 < \rprt(z_n) < A'.
\end{equation}

\smallskip

Let $U^{(j)}$, $1\leqslant j\leqslant 4$, be connected open neighbourhoods of $0$ (which represents
$q_{j^*}$ in our present coordinates) such that
\begin{itemize}
	\item $U^{(1)} \Subset U^{(2)} \Subset U^{(3)} \Subset U^{(4)}$; and
	\item $U^{(j)}\cap \OM = \{z\in \OM\cap V_{j^*} \mid 0 < \|z\| < jA'/4\}$, $1\leqslant j\leqslant 4$.
\end{itemize}
Let $\chi_1 \in \smoo^{\infty}(\C^n)$ be such that $\chi_1 : \C^n \to [0, 1]$ and satisfies
\[
  \left.\chi_1\right|_{U^{(1)}} \equiv 0, \quad \text{and}
  \quad  \left.\chi_1\right|_{\C^n\setminus U^{(2)}} \equiv 1.
\]
Let $\phi$ be a smooth, nondecreasing convex function on $[0, +\infty)$ satisfying $\phi(x) = 0$
for each $x\in [0, (A')^2/16]$ that grows very gradually in $((A')^2/16, (A')^2/4]$ and
very rapidly in $[9(A')^2/16, +\infty)$ in a manner that
we shall specify presently. Set $M_{\phi} \defeq \sup_{z\in \OM}\phi(\|z\|^2)$ and
write
\[
  \Phi(z) \defeq \phi(\|z\|^2) - M_{\phi} \quad \forall z\in \OM.
\]
Clearly, $\Phi$ is plurisubharmonic. We compute:
\begin{align*}
  \levi(\rho + \chi_1\Phi)(z; v) = \ &\levi(\rho)(z; v) + \chi_1(z)\levi(\Phi)(z; v) \\
  			& \ + 2\rprt\Big[\sum_{j, k = 1}^n\pdshort{j}{{}}\chi_1\,\pdshort{\overline{k}}{{}}\Phi(z)v_j\overline{v}_k\Big]
			+ \Phi(z)\levi(\chi_1)(z; v) \\
  \geqslant & \ \|v'\|^2 + 4^{-1}|v_n|^2
  			- 2\sum_{j, k = 1}^n\big|\pdshort{j}{{}}\chi_1\,\pdshort{\overline{k}}{{}}\Phi(z)\big|\,|v_j|\,|\overline{v}_k| \\
		& \ - |\Phi(z)|\,|\levi(\chi_1)(z; v)| \quad \forall (z, v) \in \big((U^{(2)}\!\setminus\!U^{(1)})\cap \OM\big)\times \C^n.
\end{align*}
We can drop the term $\chi_1(z)\levi(\Phi)(z; v)$ altogether from the right-hand side of the above
inequality since it is non-negative.  We now state the first of the properties of $\phi$
alluded to above: $\phi$ grows so slowly in the interval
$((A')^2/16, (A')^2/4]$ that
\begin{equation}\label{E:leviform_inner-ring}
  \levi(\rho + \chi_1\Phi)(z; v) \geq 2^{-1}\|v'\|^2 + 8^{-1}|v_n|^2
  \quad \forall (z, v) \in \big((U^{(2)}\!\setminus\!U^{(1)})\cap \OM\big)\times \C^n.
\end{equation}
Now pick $\chi_2\in \smoo_{c}^{\infty}(\C^n)$ such that $\chi_2 : \C^n\to [0,1]$ and satisfies
\[
 \left.\chi_2\right|_{U^{(3)}} \equiv 1, \quad \text{and}
 \quad  \left.\chi_2\right|_{\C^n\setminus U^{(4)}} \equiv 0.
\]
A Levi-form calculation very similar to the one above gives us
\begin{align*}
  \levi(\chi_2\rho + \Phi)(z; v) \geqslant \ &\phi'( \|z\|^2 )\|v\|^2 + \phi''( \|z\|^2 )\left|\langle z, v\rangle\right|^2
  				-  2\sum_{j, k = 1}^n\big|\pdshort{j}{{}}\chi_2\,\pdshort{\overline{k}}{{}}
  				   \rho(z)\big|\,|v_j|\,|\overline{v}_k| \\
				& \ - |\rho(z)|\,|\levi(\chi_2)(z; v)|
				\quad \forall (z, v) \in \big((U^{(4)}\!\setminus\!U^{(3)})\cap \OM\big)\times \C^n.
\end{align*}
The final condition we require on $\phi$ is that $\phi'$ becomes so large on $[9(A')^2/16, +\infty)$ that
we can find a positive constant $c > 0$ so that
\begin{equation}\label{E:leviform_outer-ring}
  \levi(\chi_2\rho + \Phi)(z; v) \geqslant c\|v\|^2
  \quad \forall (z, v) \in \big((U^{(4)}\!\setminus\!U^{(3)})\cap \OM\big)\times \C^n.
\end{equation}
Finally, let us write $u(z) \defeq \chi_1\Phi(z) + \chi_2\rho(z)$ for each $z\in \OM$. Recall that $\Phi$
is plurisubharmonic (which is used
in the calculations above). By this fact, and:
\begin{itemize}
	\item by the maximum principle (applied to $\Phi$), and by the choice of the functions
	$\chi_j$, $j = 1, 2$, we see that $u < 0$ on $\OM$.
	\item from the choice of the functions $\chi_j$, $j = 1, 2$, and from the inequalities
	\eqref{E:leviform_inner-ring} and \eqref{E:leviform_outer-ring}, it follows that $u$ is plurisubharmonic
	on $\OM$.
\end{itemize}

\smallskip

By the rotational symmetry of the spike $\OM\cap V_{j^*}$, it follows that for any $w\in \OM\cap V_{j^*}$
with $\rprt(w_n)$ sufficiently small, we have
\[
  \delta_{\OM}(w) = {\rm dist}\big( \rprt(w_n) + iS(w), \mathsf{graph}(\psi) \big).
\]
Here, $S(w)$ is our abbreviation for $\sqrt{\iprt(w_n)^2 + \|w'\|^2}$. From this last observation and
elementary calculus, it follows that for such a $w$, if $\xi^w\in \bdy{\OM}$ is a point such that
$\|w - \xi^w\| = \delta_{\OM}(w)$, then
\[
  \rprt(w_n) - \rprt\big( \pi_n(\xi^w) \big) \in \big( 0, \psi(\rprt(w_n))\psi'(\rprt(w_n)) \big).
\]
Thus, it follows from Lemma~\ref{L:psi-prime} and a few elementary estimates
that (we write $\xi^w_n \defeq \pi_n(\xi^w)$ henceforth)
\[
  \frac{\delta_{\OM}(w)}{\psi(\rprt(w_n)) - S(w)}
  =  \frac{\big| \big( \rprt(\xi^w_n) - \rprt(w_n) \big) + i\big( \psi( \rprt(\xi^w_n) ) - S(w) \big) \big|}
  	{\psi(\rprt(w_n)) - S(w)} \to 1 \text{ as } \rprt(w_n) \to 0.
\]
Hence, there exists a constant $A'' > 0$ such that
\begin{align}
  \{z \in \OM\cap V_{j^*} \mid 0 < \rprt(z_n) < A''\} &\subset  \OM\cap U^{(1)} \; \;\text{and}
  \; \;\psi(x) \in (0, 1) \; \; \forall x \in (0, A'');
  \label{E:small_spike} \\
  \frac{\delta_{\OM}(w)}{\psi(\rprt(w_n)) - S(w)} &> \frac{1}{2} \; \; \forall w: \rprt(w_n) \in (0, A'')
  \label{E:delta_estimate}
\end{align}

\smallskip

We now appeal to Result~\ref{R:koba_metric_lower}. Fix a point $w\in \OM\cap V_{j^*}$ such that
$0 < \rprt{w_n} < A''$. Then, there exists a constant $b > 0$ such that
\begin{align}
  \kappa_{\OM}(w; v) &\geqslant b\frac{\|v\|}{|u(w)|^{1/2}} \label{E:koba-metric_delicate}\\
  &= b\,\frac{\|v\|}{\big( \psi(\rprt(w_n)) - S(w) \big)^{1/2}\big( \psi(\rprt(w_n)) + S(w) \big)^{1/2}}
  &&  (\text{by \eqref{E:small_spike}, given def'ns.~of $\chi_1$, $\chi_2$}) \notag \\
  &\geqslant \frac{b}{\sqrt{2}}\,\frac{\|v\|}{\big( \psi(\rprt(w_n)) - S(w) \big)^{1/2}}
  && (\text{by \eqref{E:small_spike} above}) \notag \\
  &\geqslant \frac{b}{2}\,\frac{\|v\|}{\delta_{\OM}(w)^{1/2}}
  && (\text{by \eqref{E:delta_estimate} above}), \notag
\end{align}
and this estimate holds for any arbitrary $w$ as described above\,---\,i.e., $w\in \OM\cap V_{j^*}$ such that
$0 < \rprt{w_n} < A''$.

\medskip
%%\pagebreak

\noindent{\textbf{Step~2.} \emph{An upper bound for $M_{\OM}$}}
\vspace{0.5mm}

\noindent{Since the exceptional point $q_{j^*}$ in Step~1 was arbitrarily chosen, we actually infer the following
from Step~1: there exist a constant $\beta > 0$ and constants $A''_1,\dots, A''_N > 0$ such that
\begin{align}
  \dkoba_{\OM}(w; v) \geqslant \beta\,\frac{\|v\|}{\delta_{\OM}(w)^{1/2}}
  \quad &\forall w\in \OM\cap \uni^{-1}_j\big( \{z \in \C^n \mid \rprt(z_n) < A''_j\} \big) \text{ and} \notag \\
  &\forall v\in \C^n, \label{E:koba_metric_spike}   
\end{align}
where $j = 1,\dots, N$. Now, let us write:
\begin{align*}
  \mani_0 &\defeq \bdy{\OM} \cap \bigcap\nolimits_{1\leqslant j\leqslant N}\!\uni^{-1}_j
  \big( \{z \in \C^n \mid \rprt(z_n) > A''_j/2\} \big), \\
  \mani_1 &\defeq \bdy{\OM} \setminus \bigcup\nolimits_{1\leqslant j\leqslant N}\!\uni^{-1}_j
  \big( \{z \in \C^n \mid \rprt(z_n) < A''_j\} \big).
\end{align*}
It is clear from Definition~\ref{D:caltrop} and from very standard facts about strongly Levi-pseudoconvex
hypersurfaces that the Levi-nondegeneracy condition stated in Result~\ref{R:koba_metric_M-BB}
holds true at every $\xi\in \mani_0$. Thus, it follows from Result~\ref{R:koba_metric_M-BB} that there
exists an $\overline{\OM}$-open neighbourhood $\mathcal{V}$ of $\mani_1$ and a constant $\beta' > 0$
such that
\begin{equation}\label{E:koba_metric_elsewhere}
  \dkoba_{\OM}(w; v) \geqslant \beta'\,\frac{\|v\|}{\delta_{\OM}(w)^{1/2}}
  \quad \forall w\in \mathcal{V}\cap \OM \text{ and } \forall v\in \C^n.
\end{equation}
Now, by definition, the set
\[
 \OM\setminus \left(\mathcal{V} \cup \bigcup\nolimits_{1\leqslant j\leqslant N}\!\uni^{-1}_j
  \big( \{z \in \C^n \mid \rprt(z_n) < A''_j\} \big) \right)
\]
is compact. Thus, in view of \eqref{E:koba_metric_spike} and \eqref{E:koba_metric_elsewhere}, it follows that
\[
  \frac{1}{\dkoba_{\OM}(w; v)} \lesssim\delta_{\OM}(w)^{1/2}
  \quad \forall w\in \OM \text{ and } \forall v\in \C^n : \|v\| = 1.
\]
In particular, $M_{\OM}(r) \lesssim r^{1/2}$.}

\medskip

\noindent{\textbf{Step~3.} \emph{The behaviour of $\koba_{\OM}$}}
\vspace{0.5mm}

\noindent{Let us initially \textbf{fix} an exceptional point in the set $\{q_1,\dots, q_N\}$.
Let $a_j, h_j$ and $B_j$ be the constants given by Lemma~\ref{L:ess_bulk_lmm}
taking $\psi = \psi_{j}$. For simplicity of notation, we shall denote the first two constants
as $a$ and $h$\,---\,with the dependence on $j$ being understood. Consider a point $w$:
\[
  w \in \OM\cap \uni^{-1}_j\big( \{z \in \C^n \mid \rprt(z_n) < B_j/2\} \big),
\]
and write $\uni_j(w) = (\omega', \omega_n)$.
Next, consider the holomorphic map
$\Psi_{j, w} : \cspdmaltalt\to \C^n$ given by
\[
  \Psi_{j, w}(\zeta) \defeq \uni^{-1}_j\big( \omega', \psi^{-1}_j(S(\omega)) + i\iprt(\omega_n)
  + \zeta \big) \quad \forall \zeta\in \cspdmaltalt, \vspace{1mm}
\]
where $S(\omega) \defeq \sqrt{\iprt(\omega_n)^2 + \|\omega'\|^2}$, and
$\cspdmaltalt$ is the domain constructed in Section~\ref{S:comparison_dom}.
The parameters $a$ and $h$ are as just described above. We take $\alpha =  1/(p_j - 1)$.
Note that this map is a $\C$-affine embedding of $\cspdmaltalt$ into $\C^n$. As hinted in
Section~\ref{S:comparison_dom}, we shall show that $\Psi_{j, w}$ embeds $\cspdmaltalt$
in $\OM$\,---\,by which we can estimate $\koba_{\OM}$.}
\smallskip

Observe that $\rprt(\omega_n) < B_j/2$.
Hence, given the set to which $w$ belongs and by Definition~\ref{D:caltrop}, $\|\omega'\| < \psi_j(B_j/2)$.
Thus, it follows from parts~\eqref{Cnclsn:par_inc_md_inc}
and \eqref{Cnclsn:par_inc_rch_byd_rprt} of Lemma~\ref{L:ess_bulk_lmm} that:
\begin{itemize}[leftmargin=27pt]
	\item[$(a)$] With $w$ as chosen above,
	\begin{multline*}
	  \{\omega'\}\times \big( \psi^{-1}_j(S(\omega)) + i\iprt(\omega_n)
	  + \cspdmaltalt \big) \\
	  	\subset \big\{ (z', z_n)\in \C^n \mid \rprt(z_n) \in (0, A_j), \; 
		\iprt(z_n)^2 + \|z'\|^2 < \psi_j\big( \rprt(z_n) \big)^2 \big\}.
	\end{multline*}
	\item[$(b)$] $\rprt(\omega_n) \in (\psi^{-1}_j(S(\omega)),\,\psi^{-1}_j(S(\omega)) + o)$.
\end{itemize}
Here $o\in \cspdmaltalt$ is as provided by \eqref{E:base-pt_o} for the
above-mentioned choice of parameters.  Now, write
\[
  K_{j} \defeq \uni^{-1}_j(K) \quad\text{and}
  \quad z_w \defeq \Psi_{j, w}(o),
\]
where $K$ is the compact set given by Lemma~\ref{L:ess_bulk_lmm} taking
$\psi = \psi_j$ in that lemma. Then, it follows from part~\eqref{Cnclsn:par_inc_rch_into_cpt}
of the latter lemma that
\begin{equation}\label{E:in_K_j}
  z_w \in K_j \quad\forall w\in \OM\cap V_j : 0 < \rprt(\omega_n) < B_j/2.
\end{equation}
From $(a)$ we see that $\Psi_{j, w}(\cspdmaltalt) \subset \OM$. By definition,
$\Psi_{j, w}(-\psi_j^{-1}(S(\omega)) + \rprt(\omega_n)) = w$.
Thus, as holomorphic maps
are contractive relative to the Kobayashi distance, we have
\[
  \koba_{\OM}(z_w, w) \leqslant \koba_{\cspdmaltalt}(o, -\psi_j^{-1}(S(\omega)) + \rprt(\omega_n)).
\] 
In view of $(b)$, part~\eqref{Cnclsn:cusp_domain_koba_est} 
of Proposition~\ref{P:cusp_domain_props} gives us\,---\,taking $x_0 = o$ in that proposition\,---\,the
estimate
\[
  \koba_{\OM}(z_w, w) \leqslant C^{(j)} + \frac{\pi}{4h}| \rprt(\omega_n) - \psi^{-1}_j(S(\omega)) |^{-(p_j - 1)},
\]
for some constant $C^{(j)} > 0$.  Since the maps $\uni_j$ and $\uni^{-1}_j$ preserve Euclidean distances,
the above inequality together with part~\eqref{Cnclsn:par_inc_dist_est} of Lemma~\ref{L:ess_bulk_lmm}
gives us the following:
\begin{equation}\label{E:koba-dist_imp-estimate}
  \koba_{\OM}(z_w, w) \leqslant C^{(j)} + \frac{\pi}{4h}\delta_{\OM}(w)^{-(p_j - 1)} \quad
  \forall w\in \OM\cap V_j : 0 < \rprt(\omega_n) < B_j/2.
\end{equation}
Since the exceptional point $q_j$ was chosen arbitrarily in this discussion, the statements
\eqref{E:in_K_j}  and \eqref{E:koba-dist_imp-estimate} hold for each $j = 1,\dots, N$.

\smallskip

Since $\bdy{\OM}$ is of class $\smoo^2$ away from the points $q_1,\dots, q_N$, and $\OM$
is bounded, it is routine to find a compact set $K_0 \subset \OM$ and a constant
$R > 0$ such that for each point
\begin{equation}\label{E:close_to_smooth_point}
  w \in \OM \setminus \left(K_0 \cup \bigcup\nolimits_{1\leqslant j \leqslant N}\!\uni^{-1}_j
  \big( \{z \in \C^n \mid \rprt(z_n) < B_j/2\} \big) \right)
\end{equation}
there exists a point
\[
  \xi^w\in \bdy{\OM} \setminus \bigcup\nolimits_{1\leqslant j \leqslant N}\!
  		\uni^{-1}_j\big( \{z \in \C^n \mid \rprt(z_n) < B_j/4\} \big)
\]
so that, if $\eta^w$ denotes the unit inward-pointing normal vector to $\bdy{\OM}$, then 
\begin{itemize}[leftmargin=22pt]
	\item[$(a')$] $\xi^w + D(R; R)\eta^w \subset \OM$;
	\item[$(b')$] $w$ lies on the line segment joining $\xi^w$ to $\xi^w+ R\eta^w \defines z^w$; and
	\item[$(c')$] $z^w \in K_0$.
\end{itemize}
Thus, for each $w$ as indicated above, there is a unique number $t(w) \in (0, R)$ such that
$\xi^w + t(w)\eta^w = w$.
From this (and the fact that holomorphic maps are contractive relative to the Kobayashi distance) it follows that
\begin{align}
  \koba_{\OM}(z^w, w) \leqslant \koba_{D(R; R)}(0, t(w)) = \ &\frac{1}{2}\log\left(\frac{2 - (t(w)/R)}{t(w)/R}\right)
  \notag \\
  \leqslant \ &\log(\sqrt{2}) + \frac{1}{2}\log\left(\frac{1}{\|\xi^w - w\|}\right)
  \notag \\
  \leqslant \ &\log(\sqrt{2}) + \frac{1}{2}\log\left(\frac{1}{\delta_{\OM}(w)}\right) \notag \\
  &\forall w \text{ satisfying the condition given by \eqref{E:close_to_smooth_point}.}
  \label{E:koba-dist_simple-estimate}
\end{align}
Let us now fix a point $z_0\in \OM$. Write
\begin{align*}
  K^* &\defeq K_0\cup K_1\cup\dots \cup K_N, \\
  C_0 &\defeq \sup\nolimits_{x\in K^*}\koba_{\OM}(z_0, x) +
  		\max\big( \log(\sqrt{2}), C^{(1)},\dots, C^{(N)} \big).
\end{align*}
Then, by the triangle inequality for $\koba_{\OM}$, \eqref{E:in_K_j}, and by the inequalities 
\eqref{E:koba-dist_imp-estimate}  and \eqref{E:koba-dist_simple-estimate} it follows that there
exists a constant $C_1 > 0$ such that
\[
  \koba_{\OM}(z_0, z) \leqslant C_0 + C_1\delta_{\OM}(z)^{-\max_{1\leqslant j\leqslant N}p_j +1}
  \quad \forall z\in \OM.
\]
\smallskip

\noindent{\textbf{Step~4.} \emph{Caltrops are visibility domains with respect to the Kobayashi distance}}
\vspace{0.5mm}

\noindent{Let us write
$p_0 \defeq \max_{1\leqslant j\leqslant N}p_j$. Then, by hypothesis, $p_0 \in (1, 3/2)$.
We shall complete the proof using Theorem~\ref{T:gen_visibility-lemma}. In the notation of that theorem,
we can\,---\,using the conclusion of Step~3\,---\,take $f(r) = C_0 + C_1r^{p_0-1}$.
Thus, using the conclusion of Step~2, we have
\[
  0\,\leqslant\,\frac{M_{\OM}(r)}{r^2}\,f'\!\!\left(\frac{1}{r}\right)\,\lesssim\,\frac{r^{1/2}}{r^2}\bcdot r^{2-p_0}
  = \frac{1}{r^{p_0-(1/2)}}.
\]
As $p_0 < 3/2$, we have
\[
  0 \leqslant \int_0^{r_0} \frac{M_{\OM}(r)}{r^2}\,f'\!\!\left( \frac{1}{r} \right) dr \lesssim
  \int_0^{r_0}\frac{dr}{r^{p_0-(1/2)}}  < \infty
\]
for $r_0$ so small that $(0, r_0)$ is included in the domain of the integrand.
Hence, we conclude from Theorem~\ref{T:gen_visibility-lemma} that $\OM$ is a visibility domain with respect
to the Kobayashi distance.}

\medskip

\noindent{\textbf{Step~5.} \emph{Caltrops are not Goldilocks domains}}
\vspace{0.5mm}

\noindent{We will show that the condition on the growth of the Kobayashi distance that Goldilocks domains must
satisfy fails in a caltrop. To do so, we fix an
exceptional point $q_{j^*}$ and refer the reader to Step~1 for an explanation for why
we can, without loss of generality, take $\OM\cap V_{j^*}$ to be
\begin{equation}\label{E:good_prsntn}
  \OM\cap V_{j^*}\,=\big\{ z \in \C^n \mid 0 < \rprt(z_n) < A, \, \iprt(z_n)^2 + \|z'\|^2 < \psi( \rprt(z_n) )^2
	  \big\}
\end{equation}
(recall that $V_{j^*}$ is the neighbourhood of $q_{j^*}$ given by Definition~\ref{D:caltrop}).
As in Step~1, we drop, for the moment, the sub/superscript ``$j^*$''.}

\smallskip

At this stage, we shall need the following

\begin{lemma}\label{L:koba_lower_bnd}
	Fix an exceptional point $q_{j^*}\in \bdy\OM$ and let $(z_1,\dots, z_n)$ be the system
	of holomorphic coordinates centred at $q_{j^*}$ such that $\OM\cap V_{j^*}$ has
	the form \eqref{E:good_prsntn}. Let $A''$ be as introduced just prior to \eqref{E:small_spike},
	and let $z_0 = (0,\dots, 0, A''/2)$. Then, for any $z\in \OM\cap V_{j^*}$ such that
	$0 <\rprt(z_n) < A''/2$, we have
	\begin{equation}\label{E:koba_lower_bnd}
	  \koba_{\OM}(z_0, z)\,\gtrsim\,\rprt(z_n)^{-(p-1)} - (A''/2)^{-(p-1)}.
	\end{equation}
\end{lemma}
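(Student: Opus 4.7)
The plan is to apply Sibony's inequality (Result~\ref{R:koba_metric_lower}) to the negative plurisubharmonic function $u$ constructed in Step~1 of the proof of Theorem~\ref{T:visibility-caltrops}, obtaining a strong pointwise lower bound for $\dkoba_\OM$ in the spike, and then to integrate this bound along an arbitrary piecewise $\smoo^1$ path from $z_0$ to $z$.

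By Step~1 of the proof of Theorem~\ref{T:visibility-caltrops}, on $U^{(1)}\cap\OM$ the function $u$ agrees with $\rho(w) = \iprt(w_n)^2 + \|w'\|^2 - \psi(\rprt(w_n))^2$, whose Levi form is uniformly positive by \eqref{E:levi_lo-bound_spike}. Setting $E \defeq \{z\in \OM\cap V_{j^*} : 0 < \rprt(z_n) < A''\}$, the inclusion $E\subseteq U^{(1)}\cap\OM$ holds by \eqref{E:small_spike}, and on $E$ one has $|u(w)| = \psi(\rprt(w_n))^2 - S(w)^2 \leq \psi(\rprt(w_n))^2 \leq C^2\rprt(w_n)^{2p}$. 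Hence Result~\ref{R:koba_metric_lower} furnishes a constant $b' > 0$ such that
\[
  \dkoba_\OM(w;v)\,\geq\,\frac{b'\|v\|}{\rprt(w_n)^p} \quad \forall w\in E,\; \forall v\in \C^n.
\]
Given any piecewise $\smoo^1$ path $\gamma:[0,1]\to\OM$ with $\gamma(0) = z_0$ and $\gamma(1) = z$, set $h(t) \defeq \rprt(\gamma_n(t))$; then $\|\gamma'(t)\|\geq |h'(t)|$, $h(0) = A''/2$ and $h(1) = \rprt(z_n)$. Whenever $\gamma(t)\in E$, the integrand $\dkoba_\OM(\gamma(t);\gamma'(t))$ is at least $b'|h'(t)|/h(t)^p$, which integrates (as the total variation of $t\mapsto -(p-1)^{-1}h(t)^{-(p-1)}$) to a quantity controlled by the endpoint values of $h$.

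The hard part is to prevent excursions of $\gamma$ out of $E$ from spoiling this estimate. To this end, I would shrink $A''$ further if necessary so as to geometrically isolate $E$ from $\bdy V_{j^*}$; this is permissible because the conclusion of the lemma for a smaller constant $A^\dagger\leq A''$ implies it for the original $A''$ via the triangle inequality together with Result~\ref{R:koba_facts_misc} (which handles the bounded range $\rprt(z_n)\in [A^\dagger/2, A''/2)$ in which both sides of \eqref{E:koba_lower_bnd} are bounded). Since every $w\in \overline{E}$ satisfies $\|w - q_{j^*}\|^2\leq \rprt(w_n)^2 + C^2\rprt(w_n)^{2p}$, for $A''$ sufficiently small we have $\overline{E}\cap\OM\subseteq \{w\in\C^n \mid \|w-q_{j^*}\|< d_0/2\}$, where $d_0 \defeq \mathrm{dist}(q_{j^*},\bdy V_{j^*}) > 0$. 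Putting $V'\defeq V_{j^*}\cap\{\rprt(z_n) < A''\}$ (so $V'\cap\OM = E$) and $t^*\defeq \sup\{t\in[0,1]:\gamma(t)\notin V'\}$ (with $\sup\varnothing \defeq 0$), continuity of $\gamma$ forces $\gamma(t^*)\in \bdy V'\cap\OM\subseteq (\bdy V_{j^*}\cap\OM)\cup (V_{j^*}\cap\{\rprt(z_n) = A''\})$ whenever $t^* > 0$; the first possibility is excluded by the isolation just secured, leaving $h(t^*) = A''$. If instead $t^* = 0$, then $\gamma\subseteq \overline{E}$ throughout and $h(t^*) = A''/2$. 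In either case $h(t^*)\geq A''/2$ and $\gamma|_{[t^*,1]}\subseteq \overline{E}$, whence
\[
  \int_0^1 \dkoba_\OM(\gamma(t);\gamma'(t))\,dt \,\geq\,\frac{b'}{p-1}\Big(\rprt(z_n)^{-(p-1)} - h(t^*)^{-(p-1)}\Big)\,\geq\,\frac{b'}{p-1}\Big(\rprt(z_n)^{-(p-1)} - (A''/2)^{-(p-1)}\Big).
\]
Taking the infimum over admissible $\gamma$ yields \eqref{E:koba_lower_bnd}.
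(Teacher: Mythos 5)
Your proof is correct, and it follows the same basic strategy as the paper's: apply Sibony's inequality (via the plurisubharmonic function $u$ constructed in Step~1, which coincides with $\rho$ on $U^{(1)}\cap\OM \supseteq E$) to obtain the pointwise bound $\dkoba_\OM(w;v)\gtrsim \|v\|\rprt(w_n)^{-p}$ inside the spike, and then integrate this bound against the length of a competing path $\gamma$ using the fundamental theorem of calculus in the variable $h(t)=\rprt(\gamma_n(t))$. Where you part ways with the paper is in how excursions of $\gamma$ out of the spike are controlled. The paper invokes ``elementary topological considerations'' to choose $[\alpha,\beta]$ with $\rprt(\gamma_n)([\alpha,\beta])=[\rprt(z_n),A''/2]$, but as written it does not explicitly verify that $\gamma([\alpha,\beta])\subseteq \OM\cap V_{j^*}$ — which \emph{is} needed, since the lower bound \eqref{E:koba-metric_delicate} and the estimate $|u(w)|\leqslant C^2\rprt(w_n)^{2p}$ are only stated for points $w\in\OM\cap V_{j^*}$ with $0<\rprt(w_n)<A''$. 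Your last-exit-time argument together with the observation that $\overline{E}\cap\OM$ is (after shrinking $A''$) well separated from $\bdy V_{j^*}$ plugs this gap cleanly; the only extra cost is the shrinking of $A''$ and the subsequent patching via the triangle inequality and Result~\ref{R:koba_facts_misc}, which you handle correctly. (Note that shrinking could have been avoided entirely by taking the last exit time from the set $\{t:\gamma(t)\in\OM\cap V_{j^*},\ \rprt(\gamma_n(t))<A''/2\}$ and using the inclusion $\overline{E}\cap\OM\subset U^{(1)}\cap\OM$ already recorded in \eqref{E:small_spike} in place of a new isolation radius — but your version is perfectly sound.)
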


We shall defer the proof of this lemma until the end of this section. Instead, let us use
it to complete this proof. Write $\xpoint \defeq (0,\dots, 0, x)$, $0 < x < A''/2$.
Now, take $z = \xpoint$ in the above lemma to get
\[
  \koba_{\OM}(z_0, \xpoint)\,\gtrsim\,x^{-(p-1)} - (A''/2)^{-(p-1)}.
\]
Now, substitute $\xpoint$ for the $w$
in the statement just prior to \eqref{E:small_spike} in
Step~1 to infer that $\delta_{\OM}(\xpoint) \thickapprox x^p$ for any $x\in (0, A''/2)$. Applying this to
the last estimate, we have
\[
  \koba_{\OM}(z_0, \xpoint)\,\gtrsim\,\delta_{\OM}(\xpoint)^{-1+(1/p)} - (A''/2)^{-(p-1)}.
\]
Since, $p = p_{j^*} > 1$, 
$\delta_{\OM}(\xpoint)^{-1 + (1/p)}/\log\big( 1/\delta_{\OM}(\xpoint) \big) \to +\infty$
as $\xpoint \to q_{j^*}$. Thus
$\koba_{\OM}(z_0, \xpoint)$ \textbf{cannot} satisfy the upper bound \eqref{E:Goldilocks_koba-distance_bound}
for any choice of constants $C, \alpha > 0$ as $\xpoint \to q_{j^*}$. Thus the caltrop $\OM$ is not a
Goldilocks domain.
\end{proof}

We now provide

\begin{proof}[The proof of Lemma~\ref{L:koba_lower_bnd}]
Fix a $z\in \OM\cap V_{j^*}$ with $0 <\rprt(z_n) < A''/2$.
In proving this lemma, we shall use a slightly different 
lower bound for $\dkoba_{\OM}$, which was also derived in Step~1. Let
\[
  \mathscr{C}(z) \defeq \text{the class of all piecewise $\smoo^1$
  paths $\gamma : ([0, 1], 0, 1) \to (\OM, z_0, z)$}.
\]
As discussed at the beginning of Section~\ref{S:prelims_technical}:
\begin{equation}\label{E:integ_dist}
  \koba_{\OM}(z_0, z) =
  \inf_{\gamma\in \mathscr{C}(z)}\int_0^{1}\dkoba_{\OM}(\gamma(t); \gamma'(t))\,dt.
\end{equation}
Pick a $\gamma\in \mathscr{C}(z)$. Since $\rprt(\gamma_n)$ is a continuous function and
$\rprt(\gamma_n)([0, 1]) \supset [\rprt(z_n), A''/2]$, it follows from elementary topological considerations
that there exist numbers $\alpha, \beta\in [0, 1]$ such that
\[
  \rprt(\gamma_n)([\alpha, \beta]) = [\rprt(z_n), A''/2].
\]
Therefore
\begin{align}
  \int_0^{1}\dkoba_{\OM}(\gamma(t); \gamma'(t))\,dt
  &\geqslant \int_{\alpha}^{\beta}\dkoba_{\OM}(\gamma(t); \gamma'(t))\,dt \notag \\
  &\geqslant \int_{\alpha}^{\beta}\frac{b\|\gamma'(t)\|}{|u(\gamma(t))|^{1/2}}\,dt, \label{E:interim_integral}
\end{align}
The second inequality above follows from the estimate \eqref{E:koba-metric_delicate}.

\smallskip

For any point $w\in \OM\cap V_{j^*}$ with $0 < \rprt(w_n) < A''$ we have
\[
  |u(w)| = \psi( \rprt(w_n) )^2 - \|w'\|^2 - \iprt(w_n)^2 \leqslant \psi( \rprt(w_n) )^2 \leqslant
  C^2\rprt(w_n)^{2p}
\]
where $C > 0$ is the constant $C_{j^*}$ mentioned in Definition~\ref{D:caltrop}.
Therefore, \eqref{E:interim_integral} gives us (the last three integrals below are Riemann
integrals; it is not hard to establish that the integrands are Riemann integrable):
\begin{align*}
  \int_0^{1}\dkoba_{\OM}(\gamma(t); \gamma'(t))\,dt
  &\geqslant \frac{b}{C}\int_{\alpha}^{\beta}\frac{|(\rprt(\gamma_n))'(t)|}{\rprt(\gamma_n(t))^p}\,dt \\
  &\geqslant \frac{b}{C}\left|\int_{\alpha}^{\beta}\frac{(\rprt(\gamma_n))'(t)}{\rprt(\gamma_n(t))^p}\,dt\right| \\
  &= \frac{b}{C}\int_{\rprt(z_n)}^{A''/2}\frac{1}{t^p}\,dt\,.
\end{align*}
A few words about the change-of-variables formula that gives the last equality: since $\gamma$ is piecewise
$\smoo^1$, we invoke (a small refinement of) the classical change-of-variables formula on a finite
collection of subintervals that tile $[\alpha, \beta]$.
Recalling that $\gamma$ was chosen arbitrarily from the class $\mathscr{C}(z)$, this last estimate, together
with \eqref{E:integ_dist}, gives us \eqref{E:koba_lower_bnd}.
\end{proof}
\smallskip

\section{Caltrops are taut}
In this section, we shall prove that any caltrop is taut. While this is believable, it takes a
little effort to show owing to the exceptional points in the boundary of a caltrop
$\OM$. At these points, $\bdy\OM$ is not just non-smooth but is not even Lipschitz (were
$\bdy\OM$ Lipschitz, tautness would have followed from a result of Kerzman--Rosay 
\cite{Kerzman_Rosay}). We first need the following standard result.

\begin{lemma}\label{L:koba_complete}
	Let $\OM$ be a bounded domain in $\C^n$ and suppose that for each $z_0\in \OM$
	and $\xi\in \bdy\OM$, we have
	\begin{equation}\label{E:distance_blow-up}
	  \lim_{\OM\ni w\to \xi}\koba_{\OM}(z_0, w) = +\infty.
	\end{equation}
	Then the metric space $(\OM, \koba_{\OM})$ is (Cauchy) complete.
\end{lemma}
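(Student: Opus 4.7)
The plan is to prove completeness by a standard contradiction argument that leverages the hypothesis \eqref{E:distance_blow-up} to rule out any Cauchy sequence escaping to $\bdy\OM$.

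First I would take an arbitrary Cauchy sequence $(w_{\nu})_{\nu\geqslant 1}$ in $(\OM, \koba_{\OM})$ and fix any $z_0 \in \OM$. By the Cauchy property, there exists $N$ such that $\koba_{\OM}(w_N, w_{\nu}) \leqslant 1$ for every $\nu \geqslant N$, so by the triangle inequality $\koba_{\OM}(z_0, w_{\nu})$ is bounded as $\nu$ varies. Since $\OM$ is bounded in $\C^n$, after passing to a subsequence I may assume that $w_{\nu} \to w_{\infty}$ in the Euclidean topology, for some $w_{\infty} \in \overline{\OM}$.

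Next I would argue that $w_{\infty} \in \OM$. If instead $w_{\infty} \in \bdy\OM$, then the hypothesis \eqref{E:distance_blow-up} would force $\koba_{\OM}(z_0, w_{\nu}) \to +\infty$, contradicting the boundedness established above. Hence $w_{\infty} \in \OM$.

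Finally, I would promote Euclidean convergence $w_{\nu}\to w_{\infty}$ (along the subsequence) to convergence in $\koba_{\OM}$. For this, I would fix a small Euclidean ball $\mathbb{B}(w_{\infty}, r) \subset \OM$, and use the distance-decreasing property of the inclusion $\mathbb{B}(w_{\infty}, r)\hookrightarrow \OM$ to deduce
\[
  \koba_{\OM}(w_{\infty}, w_{\nu})\,\leqslant\,\koba_{\mathbb{B}(w_{\infty}, r)}(w_{\infty}, w_{\nu})
\]
for all $\nu$ large enough that $w_{\nu}\in \mathbb{B}(w_{\infty}, r)$. The right-hand side is computed explicitly (it tends to $0$ as $w_{\nu}\to w_{\infty}$ in the Euclidean sense). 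Thus the chosen subsequence converges to $w_{\infty}$ in $\koba_{\OM}$, and since $(w_{\nu})$ is Cauchy, the whole sequence converges to $w_{\infty}$ in $\koba_{\OM}$. This proves completeness.

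The proof is essentially routine; the only step requiring care is ensuring that Euclidean convergence to an interior point implies Kobayashi convergence, which is handled cleanly by the inclusion of a small Euclidean ball into $\OM$ and the contractivity of the Kobayashi pseudodistance under holomorphic maps. No subtle obstacle arises, because the blow-up hypothesis does exactly the work needed to keep Cauchy sequences from escaping to the boundary.
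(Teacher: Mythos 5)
Your proof is correct and follows essentially the same routine argument the paper alludes to: rule out escape of a Kobayashi-Cauchy sequence to $\bdy\OM$ via the blow-up hypothesis, then use the compatibility of the Kobayashi and Euclidean topologies near the interior limit (implemented here via the ball inclusion and contractivity) to upgrade Euclidean convergence to convergence in $\koba_{\OM}$. The only cosmetic difference is that you pass to a Euclidean-convergent subsequence using the boundedness of $\OM$, whereas the paper points to Result~\ref{R:koba_facts_misc}, whose lower bound $\koba_{\OM}(z,w)\geqslant c\|z-w\|$ makes a Kobayashi-Cauchy sequence directly Cauchy in the Euclidean metric and thus avoids the subsequence step.
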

\noindent{The proof of this lemma involves very standard arguments. We use the conclusion of
Result~\ref{R:koba_facts_misc} and the fact that the metric-topology on $\OM$ induced
by $\koba_{\OM}$ coincides with its standard topology. We skip the routine details.}

\smallskip

With this, we are in a position to prove

\begin{theorem}\label{T:caltrops_taut}
	Caltrops are complete relative to the Kobayashi distance. In particular, they are taut.
\end{theorem}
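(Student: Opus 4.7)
The plan is to reduce the tautness claim to a Kobayashi-distance blow-up at the boundary via Lemma~\ref{L:koba_complete}, and then verify the hypothesis of that lemma by treating smooth strongly pseudoconvex boundary points and exceptional points separately. Once $(\OM,\koba_{\OM})$ is shown to be complete, the tautness follows from the classical fact that a bounded Kobayashi-complete (hence complete hyperbolic) domain is taut. So the entire argument reduces to establishing, for every fixed $z_0 \in \OM$ and every $\xi \in \bdy{\OM}$, that $\koba_{\OM}(z_0, w) \to +\infty$ as $\OM \ni w \to \xi$.

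For $\xi \in \bdy{\OM}\setminus \{q_1,\dots,q_N\}$, I would invoke Result~\ref{R:koba_dist_lower_bd}. Since $N$ is finite and $\bdy{\OM}$ is strongly Levi-pseudoconvex at every non-exceptional point, I can pick a second smooth strongly Levi-pseudoconvex point $\eta \in \bdy{\OM}$ with $\eta \neq \xi$; Result~\ref{R:koba_dist_lower_bd} furnishes neighbourhoods $V_{\xi}, V_{\eta}$ and a constant $C$. Then I would fix an auxiliary reference point $b_0 \in \OM\cap V_{\eta}$, so that $\koba_{\OM}(z_0,b_0) < \infty$. For every $w \in \OM\cap V_{\xi}$, Result~\ref{R:koba_dist_lower_bd} gives
\[
  \koba_{\OM}(b_0, w) \,\geqslant\, \tfrac{1}{2}\log\tfrac{1}{\delta_{\OM}(b_0)} + \tfrac{1}{2}\log\tfrac{1}{\delta_{\OM}(w)} - C,
\]
which tends to $+\infty$ as $w \to \xi$; the triangle inequality $\koba_{\OM}(z_0,w) \geqslant \koba_{\OM}(b_0,w) - \koba_{\OM}(z_0,b_0)$ then yields the claimed blow-up.

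For $\xi = q_{j^*}$ an exceptional point, I would pass to the coordinates centred at $q_{j^*}$ in which $\OM \cap V_{j^*}$ takes the canonical spike form \eqref{E:good_prsntn}. Any sequence $(w_k)$ in $\OM$ with $w_k \to q_{j^*}$ must eventually lie in this spike, and by the geometry of the spike one has $\rprt((w_k)_n) \to 0^+$. Hence Lemma~\ref{L:koba_lower_bnd} applies and yields
\[
  \koba_{\OM}(z_0^{\sharp}, w_k) \,\gtrsim\, \rprt((w_k)_n)^{-(p-1)} - (A''/2)^{-(p-1)} \,\longrightarrow\, +\infty
\]
since $p > 1$, where $z_0^{\sharp} \defeq (0,\dots,0,A''/2)$. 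The triangle inequality then extends this blow-up to an arbitrary $z_0 \in \OM$.

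The only real subtlety is the (minor) book-keeping in the second case: one must check that the hypothesis ``$0 < \rprt(z_n) < A''/2$'' of Lemma~\ref{L:koba_lower_bnd} is automatically satisfied by all $w$ in $\OM$ sufficiently close to $q_{j^*}$, which follows directly from the explicit description of the spike together with the fact that $q_{j^*}$ corresponds to $z_n = 0$ in these coordinates. With the two boundary-point cases in hand, Lemma~\ref{L:koba_complete} delivers completeness of $(\OM,\koba_{\OM})$, and the standard implication ``bounded + Kobayashi-complete $\Rightarrow$ taut'' finishes the proof.
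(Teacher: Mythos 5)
Your proposal is correct and matches the paper's proof almost verbatim: the same two-case split at the boundary (non-exceptional points via Result~\ref{R:koba_dist_lower_bd} and a triangle inequality; exceptional points via Lemma~\ref{L:koba_lower_bnd} in the spike coordinates), followed by Lemma~\ref{L:koba_complete} and the completeness-implies-tautness step (which the paper attributes explicitly to Kiernan). No substantive differences.
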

\begin{proof}
Let $\OM$ be a caltrop, and let $\{q_1,\dots, q_N\}\subset \bdy{\OM}$ be the set of
exceptional boundary points.
Fix a point $z_0\in \OM$ and a point $\xi\in \bdy\OM$. First,
we consider the case where $\xi\in \bdy{\OM}\setminus \{q_1,\dots, q_N\}$.
Pick a point $\eta\in \bdy{\OM}\setminus (\{q_1,\dots, q_N\}\cup \{\xi\})$.
Thus, $\bdy\OM$ is
strongly Levi-pseudoconvex around $\xi$ and $\eta$. We now appeal to
Result~\ref{R:koba_dist_lower_bd}: let $V_{\xi}$ and $V_{\eta}$ be the neighbourhoods
and let $C > 0$ be the constant given by this result. Let $b_{\eta}$ be some point in $\OM\cap V_{\eta}$.
Consider any sequence $(w_{\nu})_{\nu\geqslant 1}\subset \OM$
such that $w_{\nu}\to \xi$. Without loss of generality, we may assume that this
sequence is contained in $\OM\cap V_{\xi}$. Then, Result~\ref{R:koba_dist_lower_bd}
tells us that
\begin{align*}
  \koba_{\OM}(z_0, w_{\nu}) &\geqslant \koba_{\OM}(w_{\nu}, b_{\eta}) - \koba_{\OM}(b_{\eta}, z_0) \\
  							&\geqslant 2^{-1}\log\frac{1}{\delta_{\OM}(w_{\nu})}
	  							+ 2^{-1}\log\frac{1}{\delta_{\OM}(b_{\eta})}
	  							- \koba_{\OM}(b_{\eta}, z_0) - C \\
							&\to +\infty \text{ as $\nu\to \infty$}.
\end{align*}
As $\xi$ was arbitrarily chosen from $\bdy{\OM}\setminus \{q_1,\dots, q_N\}$, the above
establishes \eqref{E:distance_blow-up} for any non-exceptional boundary point.

\smallskip

Now, let $\xi$ be an exceptional boundary point. As in the statement of Lemma~\ref{L:koba_lower_bnd},
call this point $q_{j^*}$ and let $(z_1,\dots, z_n)$ be the system of holomorphic coordinates described
in this lemma.  Consider any sequence $(w_{\nu})_{\nu\geqslant 1}\subset \OM$
such that $w_{\nu}\to q_{j^*}$.  
Without loss of generality, we may assume that this
sequence is contained in $\OM\cap V_{j^*}$. Now, Lemma~\ref{L:koba_lower_bnd} is stated
keeping in mind a specific assumption about $\OM\cap V_{j^*}$ (stated just prior to it). Here too
we may without loss of generality assume that $\OM\cap V_{j^*}$ \textbf{is} the set given
by \eqref{E:good_prsntn}.
This is because the coordinates $(z_1,\dots, z_n)$ are
given by a biholomorphism defined on all of $\OM$ (indeed, on all of $\C^n$).
With this assumption, we shall identify $z_n(w_{\nu})$ and $\pi_n(w_{\nu}) \defines w_{\nu, n}$.
Then (with this assumption) we have
\begin{equation}\label{E:limit_n-th_coord}
  \rprt(w_{\nu, n}) \to 0 \text{ as $\nu \to \infty$}.
\end{equation}
Let $A''$ the constant given by
Lemma~\ref{L:koba_lower_bnd}, and let us denote the point $z_0$ mentioned in this
lemma by $\zeta_{j^*}$ (to avoid confusion with the $z_0$ fixed above). Then, this lemma tells us that
\begin{align*}
  \koba_{\OM}(\zeta_{j^*}, w_{\nu}) &\gtrsim\,\rprt(w_{\nu, n})^{-(p-1)} - (A''/2)^{-(p-1)}
								\to +\infty \text{ as $\nu\to \infty$}.
\end{align*}
The last statement follows from \eqref{E:limit_n-th_coord}. Owing to the triangle inequality
for $\koba_{\OM}$, the above suffices to establish \eqref{E:distance_blow-up} for
$\xi = q_{j^*}$. Together with the conclusion of the previous paragraph, we
conclude\,---\,using Lemma~\ref{L:koba_complete}\,---\,that $(\OM, \koba_{\OM})$
is (Cauchy) complete.

\smallskip

As $(\OM, \koba_{\OM})$ is complete, it follows from a result of Kiernan \cite{Kiernan} that
$\OM$ is taut.
\end{proof} 							
\smallskip

\section{Wolff--Denjoy theorems}\label{S:Wolff_Denjoy}
We now have all the tools needed to prove the two Wolff--Denjoy-type theorems, and a
corollary, stated in Section~\ref{S:intro}. We reiterate that the key heuristic in the proof of
Theorem~\ref{T:Wolff_Denjoy_gen} is as stated in the second paragraph following the
statement of Theorem~\ref{T:Wolff_Denjoy_gen}. Since that heuristic is entirely a
consequence of visibility, large parts of the proof below will be similar to the proof of
\cite[Theorem~1.10]{Bharali_Zimmer} for Goldilocks domains, which also relies on
this heuristic. The supporting lemmas/theorems to the proof below are those that
show that the quantitative conditions defining a Goldilocks domain are not needed.

\smallskip

The proof of Theorem~\ref{T:Wolff_Denjoy_gen} involves the analysis of two separate cases,
one of which is rather technical. This is because we do \textbf{not} assume that
$(\OM, \koba_{\OM})$ is Cauchy complete in Theorem~\ref{T:Wolff_Denjoy_gen}\,---\,to do
so would be too restrictive. To illustrate: it is not
known whether, for a weakly pseudoconvex domain $\OM\Subset \C^n$, $n\geq 3$,
$(\OM, \koba_{\OM})$ is Cauchy complete (that such a domain is a visibility domain 
follows from \cite[Theorem~1.4]{Bharali_Zimmer}). In contrast, tautness
is much simpler to determine in practice, and suffices for the conclusion of
Theorem~\ref{T:Wolff_Denjoy_gen}. With these words, we give

\begin{proof}[The proof of Theorem~\ref{T:Wolff_Denjoy_gen}]	
Since $\Omega$ is taut, it follows from a result by Abate \cite[Theorem~2.4.3]{Abate_ItThTautMan} that
either the set $\{F^{\nu} \mid \nu\in \posint\}$ is relatively compact in $\hol(\OM;\OM)$ or
$(F^{\nu})_{\nu \geqslant 1}$ is compactly divergent on $\OM$. In the former case, clearly, for each $z \in \OM$,
the orbit $\{F^{\nu}(z) \mid \nu \in \posint\}$ is relatively compact in $\OM$.

\smallskip

Hence we now suppose that $(F^{\nu})_{\nu \geqslant 1}$ is compactly divergent. 
By Montel's theorem, there exist subsequences of $(F^{\nu})_{\nu \geqslant 1}$ that converge uniformly
on compact
subsets of $\OM$ to $\bdy\OM$-valued holomorphic maps.
By Theorem~\ref{T:bdy-vld_lmts_cnstnt}, the latter maps are constant maps. Thus, we shall identify the set
\[
  \Gamma \defeq \overline{ \{F^{\nu} \mid \nu \in \posint\}}^{\,{\rm compact-open}}
  \setminus \{F^{\nu} \mid \nu \in \posint\}
\]
as a set of points in $\partial\Omega$. In a similar vein, we shall refer to the constant maps
$\textsf{const}_{p}$, where $p\in \bdy\OM$, simply as $p$.
Our goal is to show that $\Gamma$ is a single point.
We assume, to get a contradiction, that $\Gamma$ contains at least two
points. We divide our discussion into two cases:

\smallskip

\noindent{\textbf{Case~1.}} We first consider the case in which for some (and hence any) $o \in \OM$, 
\[ 
  \limsup_{\nu \to \infty} \koba_{\OM}(F^{\nu}(o),o) = \infty.
\] 
We ought to mention here that (as implied by the discussion following the statement of
Theorem~\ref{T:Wolff_Denjoy_gen}) the essence of the argument under the heading ``Case~1'' in the proof of
\cite[Theorem~1.10]{Bharali_Zimmer} applies in the present, more general, setting. The chief differences
are that:
\begin{itemize}
	\item the lemmas/propositions supporting the two arguments differ; and
	\item since the Goldilocks condition in \cite{Bharali_Zimmer} involves an upper bound
	on $\koba_{\OM}$, certain inequalities and observations (e.g., \eqref{E:minus-l_seq} below) needed
	no argument in the latter work, but for which we provide explanations (when needed) here.
\end{itemize}     
In this case we can find a strictly increasing sequence $(\nu_i)_{i \geqslant 1}\subset \posint$ such that for every
$i \in \posint$ and every $k \leqslant \nu_i$, $\koba_{\OM}(F^k(o),o) \leqslant \koba_{\OM}(F^{\nu_i}(o),o)$.
By passing to a subsequence and relabelling, if necessary, we may assume that 
$F^{\nu_i}\to \xi$ uniformly on compact subsets of $\OM$ for some
$\xi \in \bdy \OM$. By assumption, there is a subsequence 
$(F^{\mu_j})_{j \geqslant 1}$ that converges uniformly on compact subsets to
$\eta$, where $\eta \in \bdy \OM$ and $\eta \neq \xi$.
By Proposition~\ref{P:iteration_seqs},
it cannot be the case that
\[
  \limsup_{j \to \infty} \koba_{\OM}(F^{\mu_j}(o),o) = \infty,
\]
since $\eta \neq \xi$. Therefore, $\limsup_{j \to \infty} \koba_{\OM}(F^{\mu_j}(o),o) < \infty$.
Hence, by the triangle inequality:
\begin{align}
  \limsup_{i \to \infty} \limsup_{j \to \infty} \koba_{\OM}\big( F^{\nu_i}(o), F^{\mu_j}(o) \big) 
    &\geqslant
    \limsup_{i \to \infty} \limsup_{j \to \infty} \big( \koba_{\OM}\big( F^{\nu_i}(o),o \big) - \koba_{\OM} \big(
    F^{\mu_j}(o),o \big) \big) \notag \\
  &\geqslant \limsup_{i \to \infty} \Big[ \koba_{\OM}\big( F^{\nu_i}(o),o \big) -
     \limsup_{j \to \infty} \koba_{\OM}\big( F^{\mu_j}(o),o \big) \Big] \notag \\
  &= \infty. \label{E:plus_infty_outcome}
\end{align}

\smallskip

Fix an $\ell\in \posint$. When we apply Theorem~\ref{T:bdy-vld_lmts_cnstnt} to any subsequence 
of $(F^{\mu_j-\ell})_{j \geqslant 1}$ that converges uniformly on compact subsets of $\OM$, we get\,---\,since
any such subsequence converges to $\eta$ on the compact $K_{\ell} \defeq \{F^{\ell}(o)\}$\,---\,that
\begin{equation}\label{E:minus-l_seq}
  (F^{\mu_j-\ell})_{j \geqslant 1} \text{ converges uniformly on compact subsets to $\eta$}.
\end{equation} 
Let us define
\[
  M_{\ell} \defeq \limsup_{j \to \infty} \koba_{\OM}(F^{\mu_j-\ell}(o),o).
\]
We claim that 
\[
  \limsup_{\ell \to \infty}M_{\ell} < \infty.
\]
Suppose not. Then there is a strictly increasing sequence $(\ell_k)_{k \geqslant 1} \subset \posint$ such that for
each $k \in \posint$, $M_{{\ell}_k} > k$. Next, we can choose positive integers
$j_1 < j_2 < j_3 < \dots$ such that for each $k \in \posint$
\[
  \|F^{\mu_{j_k}-{\ell}_k}(o)-\eta\| < 1/k \quad\text{and}
  \quad \koba_{\OM}\big( F^{\mu_{j_k}-\ell_k}(o),o \big) > k.
\]
These inequalities imply that $F^{\mu_{j_k}-\ell_k}(o) \to \eta$ as $k \to \infty$ and
\[
  \limsup_{k \to \infty} \koba_{\OM}\big( F^{\mu_{j_k}-\ell_k}(o),o \big) = \infty,
\]
which contradicts Proposition~\ref{P:iteration_seqs}, since $\eta \neq \xi$.
Hence $\limsup_{\ell \to \infty} M_{\ell} < +\infty$, as claimed. Then
\begin{align*}
  \limsup_{i \to \infty} \limsup_{j \to \infty} \koba_{\OM}\big( F^{\nu_i}(o),F^{\mu_j}(o) \big) &\leqslant
  \limsup_{i \to \infty} \limsup_{j \to \infty} \koba_{\OM}\big( o,F^{\mu_j-\nu_i}(o) \big) \\
  &= \limsup_{i \to \infty} M_{\nu_i} < \infty.
\end{align*}
This contradicts \eqref{E:plus_infty_outcome}, and finishes the consideration of Case~1.
 
\smallskip

\noindent{\textbf{Case~2.}} We now consider the case in which for some (and hence any) $o \in \OM$,
\[
  \limsup_{\nu \to \infty} \koba_{\OM}(F^{\nu}(o),o) < \infty.  
\]
The argument that follows is almost identical to that under the heading ``Case~2'' in the proof of
\cite[Theorem~1.10]{Bharali_Zimmer}. However, since the argument is rather technical, we reproduce
it below instead of directing the reader elsewhere. 
Recall that, by assumption, there exist two distinct points $\xi, \eta \in \Gamma$.
We choose strictly increasing sequences 
$(\nu_i)_{i \geqslant 1}, (\mu_j)_{j \geqslant 1}\subset \posint$ such that
$F^{\nu_i}\to \xi$ and $F^{\mu_j}\to \eta$ uniformly on compact subsets of $\OM$. Choose
$\overline{\OM}$-open neighbourhoods $V_{\xi}$ and $V_{\eta}$ of $\xi$ and $\eta$, respectively, such
that $\overline{V_{\xi}} \cap \overline{V_{\eta}} = \varnothing$. By the fact that $\OM$ is a visibility domain with respect
to the Kobayashi distance, there exists a compact subset $K$ of $\OM$ such that for every $(1,1)$-almost-geodesic
$\sigma: [0,T] \to \OM$ satisfying $\sigma(0) \in V_{\xi}$ and $\sigma(T) \in V_{\eta}$,
$\range(\sigma) \cap K \neq \varnothing$.   	

\smallskip

Next, for $\delta > 0$ arbitrary, we define $G_{\delta} : K \times K \to [0,+\infty)$ by
\[
  G_{\delta}(x_1,x_2) \defeq \inf\{ \koba_{\OM}( F^m(x_1),x_2 ) 
  \mid m \in \posint \text{ and } \|\xi-F^m(x_1)\| < \delta \}.
\]
By the hypothesis of Case~2,
$\sup_{\,\delta\,>\,0;~x_1,\,x_2 \in K} G_{\delta}(x_1,x_2) < \infty$.
Fix $x_1,x_2 \in K$; if
$0 < \delta_1 < \delta_2$, then $G_{\delta_1}(x_1,x_2) \geqslant G_{\delta_2}(x_1,x_2)$. So, for any
$x_1,x_2 \in K$,
\[
  G(x_1,x_2) \defeq \lim_{\delta\to 0^+}G_{\delta}(x_1,x_2)
\]
is well-defined. We also define
\[
  \epsilon \defeq \liminf_{z \to \eta} \inf_{y \in K} \koba_{\OM}(y, z).
\]
Note that by Result~\ref{R:koba_facts_misc}, $\epsilon > 0$. Choose
points $q_1,q_2 \in K$ such that
\[
  G(q_1,q_2) < \inf\{ G(x_1,x_2) \mid x_1,x_2 \in K \} + \epsilon.
\]
By an argument similar to the one leading to \eqref{E:minus-l_seq}, for each fixed $j\in \posint$
\[
  (F^{\nu_i+\mu_j})_{i \geqslant 1} \text{ converges uniformly on compact subsets to $\xi$}.
\]
Therefore, we can find a strictly increasing sequence $(i_j)_{j \geqslant 1}\subset \posint$ such that
\begin{enumerate}[leftmargin=22pt]
	\item[$(a)$] $(F^{\nu_{i_j}})_{j \geqslant 1}$ converges uniformly on compact subsets to $\xi$;
	\item[$(b)$] $(F^{\nu_{i_j}+\mu_j})_{j \geqslant 1}$ converges uniformly on compact subsets
	to $\xi$; and
	\item[$(c)$] $\lim_{j \to \infty} \koba_{\OM}(F^{\nu_{i_j}}(q_1),q_2) = G(q_1,q_2)$. 
\end{enumerate}
Finally, choose a sequence $(\kappa_j)_{j \geqslant 1}$ such that $0 < \kappa_j \leqslant 1$ for all $j$ and such that
$\kappa_j \to 0^+$ as $j \to \infty$. By 
Proposition~4.4 of \cite{Bharali_Zimmer}\,---\,which guarantees the existence of $(\lambda, \kappa)$-almost-geodesics
joining a given pair of points, for any $\lambda\geqslant 1$ and $\kappa > 0$\,---\,for each $j$
there exists a $(1,\kappa_j)$-almost-geodesic $\sigma_j : [0,T_j] \to \OM$ such that $\sigma_j(0) =
F^{\nu_{i_j}+\mu_j}(q_1)$ and $\sigma_j(T_j) = F^{\mu_j}(q_2)$. Clearly, for sufficiently large $j$, $\sigma_j(0) \in
V_{\xi}$ and $\sigma_j(T_j) \in V_{\eta}$. Now, because every $\sigma_j$ is a $(1,1)$-almost-geodesic, 
$\range(\sigma_j) \cap K \neq \varnothing$ for each $j$. Hence, for each $j$, we may choose a point $x^{*}_j \in
\range(\sigma_j) \cap K$. Since $K$ is compact, we may, by passing to a subsequence and relabelling, assume
that $x^{*}_j \to x^0 \in K$ as $j \to \infty$. Therefore, by
Lemma~\ref{L:basic_but_important}, 
we have
\begin{equation} \label{E:kob_dist_ineq_geod_cpt} 
  \koba_{\OM}(F^{\nu_{i_j}+\mu_j}(q_1),F^{\mu_j}(q_2)) \geqslant \koba_{\OM}(F^{\nu_{i_j}+\mu_j}(q_1),x^{*}_j) +
  \koba_{\OM}(x^{*}_j, F^{\mu_j}(q_2)) - 3\kappa_j.
\end{equation}
Now
\begin{align} 
  \liminf_{j \to \infty} \koba_{\OM}(F^{\nu_{i_j}+\mu_j}(q_1),x^{*}_j)
  &\geqslant \liminf_{j \to \infty} \big( \koba_{\OM}(
  F^{\nu_{i_j}+\mu_j}(q_1), x^0)-\koba_{\OM}(x^0, x^{*}_j) \big) \notag \\
  &= \liminf_{j \to \infty} \koba_{\OM}(F^{\nu_{i_j}+\mu_j}(q_1), x^0) 
       - \lim_{j \to \infty} \koba_{\OM}(x^0 ,x^{*}_j) \notag \\
  &= \liminf_{j \to \infty} \koba_{\OM}(F^{\nu_{i_j}+\mu_j}(q_1), x^0) \geqslant G(q_1, x^0). 
  \label{E:K-sep_sub-sub-sequence}
\end{align}
Again, from the definition of $\epsilon$,
\[
  \liminf_{j \to \infty} \koba_{\OM}(x^{*}_j,F^{\mu_j}(q_2)) \geqslant \epsilon.
\]
Therefore from \eqref{E:kob_dist_ineq_geod_cpt} we obtain, since $\lim_{j \to \infty} \kappa_j = 0$:
\begin{align*}
   \liminf_{j \to \infty} \koba_{\OM}(F^{\nu_{i_j}+\mu_j}(q_1),F^{\mu_j}(q_2)) &\geqslant \liminf_{j \to \infty}
   \koba_{\OM}(F^{\nu_{i_j}+\mu_j}(q_1),x^{*}_j) \\
   &\qquad\quad + \liminf_{j \to \infty} \koba_{\OM}(x^{*}_j,F^{\mu_j}(q_2)) \\
   &\geqslant G(q_1, x^0)+\epsilon.  && (\text{by \eqref{E:K-sep_sub-sub-sequence} above})
\end{align*}
On the other hand,
\[
  \limsup_{j \to \infty} \koba_{\OM}(F^{\nu_{i_j}+\mu_j}(q_1),F^{\mu_j}(q_2)) \leqslant \limsup_{j \to \infty}
  \koba_{\OM}(F^{\nu_{i_j}}(q_1),q_2) 
  = G(q_1,q_2).   
\]
Recall that the sequence $(i_j)_{j \geqslant 1}$ has been so picked that the last equality
holds true. From the last two inequalities we obtain
\[
  G(q_1,q_2) \geqslant G(q_1,x^0) + \epsilon,
\]
which is a contradiction to the choice of $q_1$ and $q_2$. This finishes the consideration of Case~2.

\smallskip

The above arguments show that we have a contradiction in either case, whence our assumption about $\Gamma$
must be wrong. This completes the proof of the theorem. 
\end{proof}

The conclusions of the last theorem constitute a step in the following:

\begin{proof}[The proof of Theorem~\ref{T:Wolff_Denjoy_special}]
Since $\OM$ is a taut bounded domain, Result~\ref{R:koba_facts_miscGeom}-(\ref{Cnclsn:taut_domains_pcvx}) tells us that
it is pseudoconvex.
It is well known\,---\,see Theorem~4.2.7 of \cite{Hormander}, for instance\,---\,that
$H^j(\OM;\C)=0$ for all $j > n$ since $\OM$ is pseudoconvex. By the universal coefficient theorem,
\[
\dim_{\C}(H^j(\OM; \C)) = \dim_{\mathbb{Q}}(H^j(\OM; \mathbb{Q})) = \textrm{rank}(H_j(\OM; \Z))
\quad\forall j\in \N.
\]
From the last two statements, together with our hypothesis, it follows that
\begin{align*}
  H^j(\OM;\mathbb{Q}) &= 0 \quad \forall j\in \N, \text{ $j$ odd}, \\
  \dim_{\mathbb{Q}}(H^j(\OM; \mathbb{Q})) &< \infty \quad \forall j\in \N, \text{ $j$ even}.
\end{align*}
Therefore we can invoke Corollary~2.10 from the article \cite{Abate_ItThCptDivSeq} by Abate 
to conclude that either $F$ has a
periodic point in $\OM$ or $(F^{\nu})_{\nu \geqslant 1}$ is compactly divergent. In the latter case, the first
outcome of the dichotomy presented by Theorem~\ref{T:Wolff_Denjoy_gen} cannot hold true. Thus,
by Theorem~\ref{T:Wolff_Denjoy_gen}, there exists a $\xi \in \bdy \OM$ such that
$(F^{\nu})_{\nu \geqslant 1}$ converges uniformly on compact subsets of $\OM$ to $\textsf{const}_{\xi}$.
However, this conclusion is not possible if $F$ has a periodic
point in $\OM$. So, in this case, the dichotomy presented by
Theorem~\ref{T:Wolff_Denjoy_gen} implies that for each $z \in \OM$, the orbit
$\{F^{\nu}(z) \mid \nu \in \posint\}$
is relatively compact in $\OM$. This completes the proof of the theorem.
\end{proof}

We are finally in a position to give a proof of Corollary~\ref{C:Wolff_Denjoy_example}.
The phrase ``finite type'' refers to the finiteness of the D'Angelo
$1$-type. We shall not define this term here: we refer the reader to 
\cite{DAngelo} for a definition.

\begin{proof}[The proof of Corollary~\ref{C:Wolff_Denjoy_example}]
If $\OM$ is a bounded pseudoconvex domain of finite type, then, by definition, $\bdy\OM$ is
at least $\smoo^2$-smooth. Therefore, it follows from a theorem of Kerzman--Rosay
\cite[Proposition~2]{Kerzman_Rosay} that $\OM$ is taut. On the other hand, if
$\OM$ is a caltrop, then we have shown\,---\,see Theorem~\ref{T:caltrops_taut}
above\,---\,that $\OM$ is taut.
\smallskip

If $\OM$ is a bounded pseudoconvex domain of finite type, then\,---\,in the terminology
of \cite{Bharali_Zimmer}\,---\,it is a Goldilocks domain and thus a visibility domain;
see Section~2 and Theorem~1.4 of 
\cite{Bharali_Zimmer}. If $\OM$ is a caltrop, then Theorem~\ref{T:visibility-caltrops}
tells us that it is a visibility domain.
Thus, in either case, $\OM$ satisfies all the conditions of
Theorem~\ref{T:Wolff_Denjoy_special}. Hence, the corollary follows.
\end{proof}

\section*{Acknowledgments}\vspace{-1mm}
\noindent{Gautam Bharali is supported by a Swarnajayanti Fellowship (Grant No.~DST/SJF/MSA-02/2013-14).
Anwoy Maitra is supported by a scholarship from the Indian Institute of Science. Both authors are also supported
by a UGC CAS-II grant (Grant No. F.510/25/CAS-II/2018(SAP-I)).}

\end{document}